\newtheorem{theorem}{Theorem}
\newtheorem{corollary}[theorem]{Corollary}
\newtheorem{proposition}{Proposition}
\newtheorem{lemma}{Lemma}
\newtheorem{assumption}{Assumption}
\newtheorem{definition}{Definition}
\newtheorem*{assumption*}{Assumption}
\newcommand{\norm}[1][\cdot]{\left\lvert\left\lvert #1 \right\rvert \right\rvert}
\newcommand{\expect}[2][]{\mathbb{E}_{#1}\left[ #2\right]}
\newcommand{\domainBregman}{\mathcal{D}}
\newcommand{\decSet}{\mathcal{X}}
\newcommand{\reals}{\mathbb{R}}
\newcommand{\interior}{\operatorname{int}}
\newcommand{\divergence}[3]{B_{#1}(#2;#3)}
\newcommand{\B}[1][\psi]{B_{#1}}
\newcommand{\mupsi}{\mu_{\psi}}
\newcommand{\ie}{\textit{i}.\textit{e}., }
\newcommand{\eg}{\textit{e}.\textit{g}. }
\newcommand{\alg}{mSPS}
\newcommand{\algmax}{mSPS\textsubscript{max}}
\title{Stochastic Mirror Descent: Convergence Analysis \\ and Adaptive Variants via the Mirror Stochastic Polyak Stepsize}
\author{\name Ryan D'Orazio \email ryan.dorazio@mila.quebec \\
      \addr Mila, Universit\'e de Montr\'eal
      \AND
      \name Nicolas Loizou \email nloizou@jhu.edu \\
      \addr Johns Hopkins University
      \AND
      \name Issam Laradji \email issam.laradji@gmail.com \\
      \addr ServiceNow Research
      \AND
      \name Ioannis Mitliagkas \email ioannis@iro.umontreal.ca\\
      \addr Mila, Universit\'e de Montr\'eal \\
      Canada CIFAR AI Chair}
\begin{document}

\maketitle

\begin{abstract}
We investigate the convergence of stochastic
mirror descent (SMD) under interpolation in relatively smooth and smooth convex optimization.
    In relatively smooth convex optimization
    we provide new convergence guarantees
    for SMD 
    with a constant stepsize.
    For smooth convex optimization
    we propose a new adaptive stepsize scheme --- the mirror stochastic Polyak stepsize (\alg).
    Notably, our convergence results in both settings
    do not make bounded gradient assumptions or bounded variance assumptions,
    and we show convergence to a neighborhood that vanishes under interpolation.
    Consequently, these results correspond to the first convergence guarantees under interpolation for the exponentiated gradient algorithm for fixed or adaptive stepsizes. 
    \alg{} generalizes the recently proposed
    stochastic Polyak stepsize (SPS)~\citep{loizou2020stochastic}
    to mirror descent and remains both practical and
    efficient for modern machine learning applications
    while inheriting the benefits of mirror descent.
    We complement our results with experiments
    across various supervised learning tasks and 
    different instances of SMD,
    demonstrating the effectiveness of \alg{}.
\end{abstract}

\section{Introduction}
We consider the constrained stochastic optimization problem,
\begin{align}\label{problem}
    \underset{x \in \decSet}{\min}\, f(x)= \expect[\xi]{f_\xi(x)},
\end{align}
where $\decSet \subseteq \reals^d$ is a non-empty closed convex set that is possibly unbounded, and $\xi$ is a random vector supported on a set $\Xi$ such that $\expect[\xi]{f_\xi(x)}$ is always well defined.
We assume that it is possible to generate a sequence of independent and identically distributed (i.i.d.) realizations of $\xi$, and that for each $x \in \decSet$ $\expect[\xi]{\nabla f_\xi(x)} = \nabla f(x)$.
We use $\cal{X}_\ast \subset \decSet$ to denote the set of minimizers $x_\ast$ of~\eqref{problem} and assume that $\cal{X}_\ast$ is not empty.

A special case of interest is the finite-sum optimization problem where $\Xi = \{1,\cdots, n\}$ and $\expect[\xi]{f_\xi(x)} = \sum_{i=1}^n\frac{f_i(x)}{n}$. Finite-sum optimization problems are often used in machine learning tasks where vector $x$ denotes the model parameters, $f_i(x)$ represents the loss on the training point $i$ and the goal is to minimize the average loss $f(x)$ across the training points while satisfying the problem constraints (expressed as $x \in \decSet$). 

A common iterative approach to solve (\ref{problem}) when
$\decSet = \reals^d$ is stochastic gradient
descent (SGD)~\citep{robbins1951stochastic, gower2019sgd}, iterates are updated in the 
negative direction of a gradient computed
from a single realization of  $\xi$.
When the problem is constrained, $\decSet \subset \reals^d$, 
one may employ projected methods such as stochastic projected gradient descent (SPGD).
However, the convergence guarantees of both SGD and SPGD
depend on values measured by the Euclidean norm.
If the Euclidean structure is not naturally suited
to the problem, 
then SGD and PGD can suffer a worse dependence
on the dimension $d$. 
A powerful generalization of SGD and SPGD is stochastic mirror descent (SMD),
permitting better convergence
guarantees by matching the geometry of the problem~\citep{nemirovski1983problem, BECK2003167}.
For example in some cases, SMD can improve SGD's $\sqrt{d}$ dependence to
$\sqrt{\log(d)}$~\citep{ben2001ordered,BECK2003167}.
Furthermore, mirror descent 
leverages non-Euclidean projections, allowing for different
choices of projections that are perhaps better suited to
the constraint set.
For example, in sequential games particular instances of mirror descent have been designed to allow for efficient projections on the strategy spaces of players~\citep{hoda2010smoothing, kroer2020faster}.

A classical analysis of mirror descent and first-order methods often relies on smoothness
with respect to some norm $\norm$. 
The norm $\norm$ is often used in selecting
the appropriate instance of mirror descent~\citep{dekel2012optimal,bubeck2014convex}. 
However, a recent trend is to study non-euclidean methods
like mirror descent with the more general assumption of relative smoothness~\citep{birnbaum2011distributed, bauschke2017descent, lu2018relatively}.
Several applications of interest are not 
smooth but relatively smooth, \emph{e.g.}, algorithmic game theory~\citep{birnbaum2011distributed}; Poisson inverse problems~\citep{bertero2009image}; and more~\citep{lu2018relatively}.

In contrast to deterministic methods, stochastic methods
under relative smoothness have received less attention.
We contribute to the literature of SMD 
with new constant-stepsize results under relative smoothness
and new adaptive-stepsize results under smoothness, 
both of which use weak assumptions on the noise.

\subsection{Main Contributions}
The key contributions of this work are as follows: 
\vspace{-0.5em}
\begin{itemize}
  \itemsep0em 
    \item \textbf{Technical assumptions on the noise.} 
Unlike most of the SMD literature, all our convergence results, with relative smoothness or smoothness, and for fixed and adaptive stepsizes, do not make
bounded gradient or bounded variance assumptions.
Instead we use 
the \emph{finite optimal objective difference}, introduced by \citet{loizou2020stochastic}, for our adaptive smooth setting and introduce a new constrained version for the relative smooth setting.
More precisely, \citet{loizou2020stochastic} assume
\begin{align}\label{sigma-inf}
    \sigma^2 \coloneqq f(x_\ast) -\expect[\xi]{f_\xi^\ast} < \infty,
\end{align}
where $f(x_\ast) = \min_{x\in\decSet} f(x)$, $f_\xi^\ast \coloneqq \inf_{x \in \reals^d}f_\xi(x)$. 
In the finite-sum case, $\expect{f_i^\ast} = \sum_{i=1}^n \frac{f_i^\ast}{n}$.
For our relative smooth results we introduce the \emph{constrained finite optimal objective difference},  a refinement that depends on the constraint $\decSet$,
\begin{align}
    \sigma_\mathcal{X}^2 \coloneqq f(x_\ast) - \expect[\xi]{f_\xi^\ast(\decSet)} < \infty,
\end{align}
where $ f_\xi^\ast(\decSet)=\inf_{x\in\decSet}f_i(x)$ and in the finite-sum case $\expect{ f_i^\ast(\decSet)} = \sum_{i=1}^n\frac{\inf_{x\in\decSet}f_i(x)}{n}$.
From definition we have that the constrained version is a weaker assumption than its unconstrained counterpart since $\sigma^2_\decSet \leq \sigma^2$.


\item \textbf{Novel adaptive SMD.}
We propose the mirror stochastic Polyak stepsize (mSPS) as an adaptive stepsize
for SMD.
Contrary to most adaptive mirror descent methods for stochastic optimization
we do not use an online to batch reduction
\citep{cesa2004generalization, littlestone1989line}.
Hence, we avoid common assumptions
like bounded constraints and provide efficient convergence results like linear convergence under strong convexity and smoothness.

\item \textbf{Exact convergence with interpolation.} 
In modern machine learning,  
overparametrized models capable of driving
training error to zero have increasingly become important in both theory and in practice~\citep{pmlr-v80-ma18a,ZhangBHRV21}.
Under these conditions (see Definition \ref{def:interpolation}) it has been shown that SGD enjoys favourable guarantees with exact convergence~\citep{pmlr-v80-ma18a}.
Our analysis with $\sigma^2$, and  $\sigma_\mathcal{X}^2$,
shows that SMD inherits similar guarantees with fast and exact convergence under interpolation.
We are unaware of other similar results
for SMD with adaptive stepsizes.
Moreover, our results provide the \emph{first} convergence guarantees under interpolation for the exponentiated gradient algorithm under both the relative smoothness and the classic smoothness settings.

\item \textbf{Extensive numerical experiments for adaptive SMD.} 
We demonstrate the adaptive capability of our proposed
adaptive stepsize across a wide variety of domains
and mirror descent algorithms for both constrained and unconstrained problems. 
\end{itemize}

\section{Related Work}

\paragraph{Stochastic Mirror Descent.}
SMD is often analyzed as a stochastic method for optimizing non-smooth Lipschitz continuous convex functions~\citep{nemirovski2009,bubeck2014convex,beck2017}.
These results can be derived from online to batch reductions yielding a $O(\nicefrac{1}{\sqrt{T}})$ convergence rate~\citep{cesa2004generalization,duchi2010composite,orabona2019modern}. In the case of non-smooth and strongly convex, several works improve the results following from online regret bounds with a $O(\nicefrac{1}{T})$ convergence rate~\citep{hazan2014beyond,ghadimi2012optimal,juditski2014primal}. 
Under smoothness, similar improvements can be made~\citep{dekel2012optimal, bubeck2014convex}. All of these results use bounded variance or bounded gradient assumptions.\footnote{\citet{lei2018} derive results for non-smooth and strongly convex functions without bounded subgradients but assume a weak growth condition.} These assumptions can be difficult to verify, and may impose further restrictions.
For example, one cannot in general assume a bounded gradient with strong convexity if $\decSet$ is unbounded, therefore it is common to assume $\decSet$ is compact.

In relative smooth optimization  \citet{hanzely2018fastest} make an assumption similar to bounded variance. More recently, \citet{dragomir2021fast} avoid making bounded variance or bounded gradient assumptions under relative smoothness but make larger restrictions on the class of problems and mirror descent methods. We make an in-depth comparison with these works in Section \ref{rel-section}.
We also add that there are several works related to randomized coordinate 
descent methods~\citep{hanzely2018fastest,gao2020randomized,hendrikx2020dual},
and with variance reduction~\citep{hendrikx2020dual,dragomir2021fast}.

\paragraph{Interpolation in constrained optimization.} 
Interpolation conditions have mostly been studied with SGD in unconstrained settings~\citep{gower2019sgd, pmlr-v89-vaswani19a} or with SMD and conditions that do not incorporate constraints~\cite{hanzely2018fastest}.
Consequently, these conditions can yield large or unbounded neighborhoods of convergence in constrained optimization.
\citet{XIAO2022184} addresses some of these shortcomings by introducing the variance based weak growth condition to model interpolation under stochastic constrained optimization. 
However, the condition only holds under interpolation and requires the variance  to be zero at the optimum.
In comparison, our constraint-aware condition $\sigma_\mathcal{X}^2$ can hold without interpolation and does not  require variance to be zero at the optimum.

\paragraph{Adaptive stepsizes.}
Adaptive stepsizes for mirror descent have a long history.
Accumulating
past gradients or subgradients to set a stepsize,
$\eta_t \propto \nicefrac{1}{\sqrt{\sum_{s=1}^{t} \norm[g_s]^2_{\ast}}}$ ,
can be traced back to
online learning~\citep{AUER200248, streeter2010less}.
Recently, similar coordinate-wise stepsizes 
such as ADAGRAD~\citep{mcmahan2010adaptive,duchi2011adaptive} have been proposed.
The convergence guarantees for these methods in 
convex optimization use online regret bounds, requiring sublinear regret.
Unfortunately, all mirror descent methods with the aforementioned stepsizes require a bounded constraint; 
when the problem is unconstrained \citet{ORABONA201850} prove a 
$\Omega(T)$ worst case lower bound for the regret.\footnote{Convergence results may still be 
possible without online to batch reductions; for example, in the case of unconstrained SGD see~\citet{Xiaoyu2019}.} 
Furthermore, in the stochastic case, bounded gradient and variance 
assumptions are made when using the online to batch reduction~\citep{duchi2018introductory, orabona2019modern}.
In contrast our methods employ a completely different stepsize and we make a very weak assumption on the noise.
Another line of related work includes adaptive stepsizes for mirror descent with non-smooth functional constraints~\citep{Bayandina2017,Bayandina2018,stonyakin2019adaptive}.

\paragraph{Polyak stepsize.} Our adaptive stepsizes are in the spirit of Polyak's stepsize --- originally proposed 
for deterministic projected subgradient descent~\citep{polyak1987}. 
In the deterministic setting Polyak's results have been been successfully extended and used for solving weakly convex and smooth problems~\citep{boyd2003subgradient,davis2018subgradient,hazan2019revisiting}.
More recently, variations of the Polyak stepsize have been proposed for stochastic optimization~\citep{loizou2020stochastic, oberman2019stochastic,berrada2020training,gower2021sgd}. 
The adaptive stepsizes proposed herein are a generalization
of SPS proposed and analyzed by \citet{loizou2020stochastic} (see Section~\ref{PolyakSubsection}) to the constrained case and for mirror descent.

\section{Background}

We denote vectors within the feasible set as
$x \in \decSet \subseteq \reals^d$,
where $\reals$ is the set of real numbers.
We use the subscript to denote
time, after $t$ time steps the average of the iterates
$x_{1},\cdots,x_{t}$ is $\bar{x}_t=\nicefrac{1}{t}\sum_{s=1}^t x_s$.
With a slight abuse of notation we may also refer
to the $i$th coordinate of $x$ as $x_i$,
$x = (x_1,\cdots,x_d)$.
Whether the subscript refers to time or the coordinate
is clear from context.
We denote $\norm_2$ as the Euclidean norm and $\norm$ as any arbitrary norm with
corresponding dual norm $\norm[x]_\ast = \sup_y\{\langle x, y\rangle : \norm[y]\leq 1\}$.

For a differentiable function $\psi$, we define the difference
between $\psi(x)$ and the first order approximation
of it at $y$ as the Bregman divergence $\divergence{\psi}{x}{y}$.
\begin{definition}[Bregman divergence]
Let $\psi : \domainBregman \to \reals$ be differentiable on $\interior \domainBregman$. 
Then the Bregman divergence with respect to $\psi$ is
$B_\psi : \domainBregman \times \interior \domainBregman \to \reals$, defined as
\[
B_\psi(x;y) = \psi(x) - \psi(y) - \langle \nabla \psi(y), x-y \rangle.
\]
\end{definition}

A differentiable function $f$ is convex on
a convex set $\decSet$ if $\divergence{f}{x}{y} \geq 0$ for any 
$x,y \in \decSet$.
Similarly, a function $f$ is $L$-smooth with respect to
a norm $\norm$ if 
$\divergence{f}{x}{y} \leq \frac{L}{2}\norm[x-y]^2$, and is $\mu$-strongly 
convex with respect to the norm $\norm$ if
$\frac{\mu}{2}\norm[x-y]^2 \leq \divergence{f}{x}{y}$.

We will also refer to the generalization of smoothness and strong convexity ---
relative smoothness and relative strong convexity defined below.

\begin{definition}
A function $f$ is $L$-smooth relative to $\psi$ on $\decSet$
if for all $(x,y) \in \decSet \times (\decSet \cap \interior \domainBregman) \,$  it holds that:
$\divergence{f}{x}{y} \leq L \divergence{\psi}{x}{y}.$
\end{definition}
\begin{definition}
A function $f$ is $\mu$-strongly convex relative to
$\psi$ on $\decSet$ if for all $(x,y) \in \decSet \times (\decSet \cap \interior \domainBregman) \, $  it holds that:
$ \mu \divergence{\psi}{x}{y}  \leq \divergence{f}{x}{y}.$
\end{definition}

\subsection{Mirror descent}
To solve problem (\ref{problem}) we consider the 
general stochastic mirror descent update
\begin{align}\label{mirror-update}
        \boxed{x_{t+1} = \arg \min_{x \in \decSet} \, \langle \nabla f_{\xi_t}(x_t),x \rangle +\frac{1}{\eta_t}\divergence{\psi}{x}{x_t}.}
\end{align}
Where $\xi_t$ is a realization of $\xi$ that is i.i.d.. In the non-smooth or deterministic setting $\nabla f_{\xi_t}(x_t)$ may be replaced by a 
subgradient or the full gradient respectively.
To make the updates well defined, 
all we require is that $x_{t+1} \in \interior \domainBregman$ in update (\ref{mirror-update}), otherwise $B_\psi(\cdot, x_{t+1})$ will be undefined at the next step.
\begin{assumption}\label{int-assumption}
Let $\decSet \subseteq \domainBregman$, then for any $g$, and any stepsize $\eta_t > 0$,
$x_{t+1} = \arg \min_{x \in \decSet} \, \langle g,x \rangle +\frac{1}{\eta_t}\divergence{\psi}{x}{x_t} \in \interior \domainBregman$.
\end{assumption}

For example the following assumption by  \citet{orabona2019modern}
would be sufficient.
\begin{assumption*}[\citet{orabona2019modern}]\label{psi-grad}
 Let $\psi: \domainBregman \to \reals$ be a strictly convex function such that  $\decSet \subseteq \domainBregman$, 
we require either one of the following to hold:
$\lim_{x \to \partial \decSet} \norm[\nabla \psi(x)]_2 = +\infty$ or $\decSet \subseteq \interior \, \domainBregman$.\footnote{$\partial \decSet$ denotes the boundary of $\decSet$.}
\end{assumption*}

The first requirement from \citet{orabona2019modern} amounts to assuming $\psi$ is a Legendre function
(\ie essentially smooth and strictly convex), which implies that $x_{t+1} \in \interior \domainBregman$~\citep{cesa2006prediction}. 
Otherwise, if the second condition holds
then the update is also well defined.
Furthermore, we note that other assumptions can be made to guarantee $x_{t+1} \in \interior \domainBregman$;
for more examples see
\citet{bauschke2003bregman}.

Another common assumption is to assume
$\psi$ is strongly convex over $\decSet$, 
which will be important
for our adaptive stepsize in Section \ref{adaptive-section}, however, it
it is not needed for the constant step size results of Section \ref{rel-section}.

The following is a standard one step mirror descent lemma and will be used
often~\citep{beck2017,bubeck2014convex,orabona2019modern, duchi2018introductory}, this particular statement and proof is taken from Lemma 6.7 by \citet{orabona2019modern} and we include the full proof in the appendix for completeness.
All other omitted proofs are deferred to the appendix.
\begin{lemma}\label{md-progress}
Let $\B$ be the Bregman divergence with respect to a convex function
$\psi : \domainBregman \to \reals$ and assume assumption \ref{int-assumption} holds.
Let $x_{t+1} = \arg \min_{x \in \decSet} \langle g_t,x \rangle +\frac{1}{\eta_t}\divergence{\psi}{x}{x_t}$.
Then for any $x_\ast \in \decSet$
\begin{align}
    B_\psi(x_\ast;x_{t+1}) \leq
    &B_\psi(x_\ast;x_{t})
    -\eta_t \langle g_t,x_t -x_\ast \rangle  -\divergence{\psi}{x_{t+1}}{x_t} + \eta_t \langle g_t,x_t-x_{t+1}\rangle.\label{progress-convex}
\end{align}
Furthermore if $\psi$ is $\mupsi$-strongly convex over $\decSet$ then
\begin{align}
       B_\psi(x_\ast;x_{t+1}) \leq &B_\psi(x_\ast;x_{t}) -\eta_t \langle g_t,x_t -x_\ast \rangle +\frac{\eta_t^2}{2\mupsi} \norm[g_t]^2_\ast. \label{progress-strong}
\end{align}
\end{lemma}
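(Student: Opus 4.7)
The plan is to combine two classical ingredients: the first-order optimality condition of the mirror descent subproblem and the three-point identity for Bregman divergences. Since $x_{t+1} \in \interior \domainBregman$ by Assumption \ref{int-assumption}, the objective $x \mapsto \langle g_t, x\rangle + \tfrac{1}{\eta_t} B_\psi(x; x_t)$ is differentiable at $x_{t+1}$, and the standard variational inequality for a constrained minimum over $\decSet$ yields
\[
\langle \eta_t g_t + \nabla \psi(x_{t+1}) - \nabla \psi(x_t),\, x - x_{t+1}\rangle \geq 0 \quad \text{for all } x \in \decSet.
\]
Specializing to $x = x_\ast$ and rearranging gives $\langle \nabla \psi(x_t) - \nabla \psi(x_{t+1}),\, x_\ast - x_{t+1}\rangle \leq \eta_t \langle g_t,\, x_\ast - x_{t+1}\rangle$.

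Next, I would apply the three-point identity $B_\psi(a;c) = B_\psi(a;b) + B_\psi(b;c) + \langle \nabla \psi(b) - \nabla \psi(c),\, a - b\rangle$, which follows by directly expanding each Bregman divergence using its definition, with the choice $a = x_\ast$, $b = x_{t+1}$, $c = x_t$. This rewrites $B_\psi(x_\ast; x_{t+1}) = B_\psi(x_\ast; x_t) - B_\psi(x_{t+1}; x_t) + \langle \nabla \psi(x_t) - \nabla \psi(x_{t+1}),\, x_\ast - x_{t+1}\rangle$. Substituting the optimality inequality from the previous step and splitting $\langle g_t, x_\ast - x_{t+1}\rangle = -\langle g_t, x_t - x_\ast\rangle + \langle g_t, x_t - x_{t+1}\rangle$ delivers \eqref{progress-convex} immediately.

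For the strongly convex refinement \eqref{progress-strong}, I would further control the trailing two terms $-B_\psi(x_{t+1}; x_t) + \eta_t \langle g_t, x_t - x_{t+1}\rangle$. The $\mupsi$-strong convexity of $\psi$ gives $B_\psi(x_{t+1}; x_t) \geq \tfrac{\mupsi}{2}\norm[x_{t+1} - x_t]^2$, while primal-dual pairing followed by Young's inequality yields $\eta_t \langle g_t, x_t - x_{t+1}\rangle \leq \eta_t \norm[g_t]_\ast \norm[x_t - x_{t+1}] \leq \tfrac{\eta_t^2}{2\mupsi}\norm[g_t]_\ast^2 + \tfrac{\mupsi}{2}\norm[x_t - x_{t+1}]^2$. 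The two quadratic-in-$\norm[x_{t+1}-x_t]$ terms cancel exactly, leaving $\tfrac{\eta_t^2}{2\mupsi}\norm[g_t]_\ast^2$. There is no serious obstacle here; the only careful point is invoking the variational inequality, which requires differentiability of $B_\psi(\cdot; x_t)$ at $x_{t+1}$—precisely what Assumption \ref{int-assumption} guarantees by ensuring $x_{t+1} \in \interior \domainBregman$. Everything else reduces to the three-point identity plus a single Young's inequality.
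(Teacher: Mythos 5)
Your proposal is correct and follows essentially the same route as the paper's proof: the first-order optimality condition of the mirror descent subproblem combined with the three-point identity yields \eqref{progress-convex}, and strong convexity plus the Fenchel--Young (Young's) inequality on the trailing terms yields \eqref{progress-strong}. The only cosmetic difference is that you start from the three-point identity and substitute the variational inequality, whereas the paper starts from $\langle \eta_t g_t, x_t - x_\ast\rangle$ and adds and subtracts terms to reveal it; the computation is the same.
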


The SMD update (\ref{mirror-update})
recovers both SGD and SPGD if $\psi$ is taken to be $\frac{1}{2}\norm{}^2_2$. 
Some other interesting examples include the case where $\psi(x) = \frac{1}{2}\norm[x]_p^2$ for $1 < p \leq 2$~\citep{grove2001general,gentile2003robustness}. 
If we instead use $\psi(x) = \frac{1}{2}\langle x, M x\rangle = \frac{1}{2} \norm[x]^2_M$ for a positive definite matrix $M$ then we recover the 
scaled projected gradient algorithm, $x_{t+1} = \arg \min_{x \in \decSet}\norm[x_{t} - \eta_t M^{-1}g_t-x]_{M}^2$
\citep{bertsekas2003parallel}.
Another common setup is when $\psi$ is taken to be the negative entropy with a constraint set $\decSet = \Delta^d= \{ x_i \geq 0 | \sum_{i=1}^d x_i =1\}$.
In this case $\psi$ is $1$-strongly convex with respect to $\norm_1$ and the update rule corresponds to the exponentiated gradient algorithm~\citep{littlestone1994weighted,KIVINEN19971,BECK2003167,cesa2006prediction}.

\subsection{Overparameterization, interpolation, and constrained interpolation}

Modern machine learning models are expressive and often over-parametrized, i.e., they can fit or interpolate the training dataset~\citep{ZhangBHRV21}. 
For example when problem~\eqref{problem} is the training problem of an over-parametrized model
such as a deep neural network~\citep{pmlr-v80-ma18a} or involves solving a
consistent linear system~\citep{loizou2020momentum,loizou2020convergence} or problems such as deep matrix factorization ~\citep{rolinek2018l4,vaswani2019painless}, each individual loss function $f_i$ attains
its minimum at $x_\ast$. That is the following interpolation condition is satisfied.

\begin{definition}[Interpolation]\label{def:interpolation}
We say that the interpolation condition holds when there exists $x_\ast \in \cal{X}_\ast$ such that
$f_\xi(x_\ast) = \inf_{x\in \reals^d}f_\xi(x)$ almost surely.
\end{definition}
 In the finite-sum setting this condition amounts to $f_\xi(x_\ast) = \inf_{x\in \reals^d}f_\xi(x)$ for all $i \in \{1,\cdots,n\}$. Note that when the interpolation condition is satisfied, it follows that $\sigma^2 = 0$ (see \eqref{sigma-inf}).

The use of interpolation in the literature has mostly been discussed within the context of unconstrained optimization.
Despite Definition \ref{def:interpolation} being designed to study SGD for unconstrained optimization, we show that constrained optimization with mirror descent enjoys similar convergence benefits if $\sigma^2 =0$.
However, the standard definition of interpolation,  as given by Definition \ref{def:interpolation}, does not adequately describe interpolation with respect to the constraint $\decSet$. 
For example, consider the case where $f_i(x) = f(x)$ for all $x \in \decSet$ but do not agree outside the constraint $\decSet$.
In this case, it is possible to  have $\sigma^2$ arbitrarily large, meanwhile, there is no variance in the stochastic gradient $\expect{\norm[\nabla f(x)-\nabla f_i(x)]^2_2}=0$ -- rendering the problem non-stochastic.

\citet{XIAO2022184} describe an interpolation-like condition within constraint optimization, however, this condition requires the stochastic gradient to have zero variance at the optimum, which need not hold generally despite all $f_\xi$ sharing a common minimum (see for example Figure \ref{fig:pgd_example}).
Therefore, we also make use of the following constrained interpolation condition:

\begin{definition}[Contrained Interpolation]\label{def:constr-interpolation}
We say that the interpolation condition holds with respect to the constraint $\decSet$ if there exists $x_\ast \in \cal{X}_\ast$ such that
$f_\xi(x_\ast) = \inf_{x\in \decSet}f_\xi(x)$ almost surely.
\end{definition}
Similar to Definition \ref{def:interpolation} we have interpolation with respect to $\decSet$ holds when $\sigma^2_\decSet =0$. In the finite-sum setting, constrained interpolation  reduces to 
$f_i(x_\ast) = \inf_{x\in \decSet}f_i(x)$ for all $i \in \{1,\cdots,n\}$.

\section{Constant and Polyak stepsize for mirror descent}\label{sec:stepsize}


In this section we provide background on constant stepsize
selection for mirror descent.
For non-constant setpsize, we introduce our natural extensions of
the classic Polyak stepsize and SPS for mirror descent.

\subsection{Constant Stepsize}
When a function is $L$-smooth with respect to the Euclidean norm,
a common stepsize for gradient descent 
is $\eta = \nicefrac{1}{L}$, 
allowing for convergence in many settings~\citep{bubeck2014convex}. 
Similarly for an $L$-relatively smooth function with respect
to $\psi$, the prescibed stepsize
for mirror descent using $\psi$ is $\eta = \nicefrac{1}{L}$~\citep{birnbaum2011distributed,lu2018relatively}.
In the stochastic and relatively smooth case, \citet{hanzely2018fastest} use $\eta =  \nicefrac{1}{L}$ as well as different stepsize schedules. 
In Section \ref{rel-section}, we provide new convergence guarantees (under weaker assumptions) for SMD with $\eta =  \nicefrac{1}{L}$.

\subsection{Polyak Stepsize}
\label{PolyakSubsection}
An alternative method to selecting a stepsize,
as suggested by Polyak~\citep{polyak1987},
is to take $\eta_t$ by minimizing an upper bound on $\norm[x_{t+1}-x_\ast]_2^2$.
From Lemma \ref{md-progress}, if we take
$\psi = \frac{1}{2}\norm^2_2$ and 
assume $g_t \in \partial f(x_t)$ is
a subgradient at $x_t$ for $f$ then we recover
a well known inequality for projected subgradient descent\footnote{After using the fact that $g_t$ is a subgradient, $f(x_t)-f(x_\ast) \leq \langle g_t, x_t -x_\ast\rangle$.}

\begin{align*}
     \frac{1}{2}\norm[x_\ast-x_{t+1}]^2_2 \leq \frac{1}{2}\norm[x_\ast-x_{t}]^2_2 -\eta_t(f(x_t)-f(x_\ast)) +\frac{\eta_t^2}{2}\norm[g_t]^2_2.
\end{align*}

Minimizing the right hand size 
with respect to $\eta_t$
yields Polyak's stepsize, $\eta_t = \nicefrac{(f(x_t)-f(x_\ast))}{\norm[g_t]^2_2}$~\citep{polyak1987, beck2017}.
Following in a similar fashion, 
we propose a generalization of Polyak's stepsize
for mirror descent.
If $\psi$ is $\mupsi$-strongly convex\footnote{Without loss of generality we could assume $\psi$ to be 1-strongly 
convex and scale $\psi$ by $\nicefrac{1}{\mupsi}$. The stepsize would
remain the same,
scaling of $\psi$ inversely
scales the stepsize.}
with respect to the norm $\norm$ then we can minimize
the right hand side of equation (\ref{progress-strong}) to arrive at the mirror Polyak stepsize
\begin{align}\label{non-smooth-step}
    \eta_t = \frac{\mupsi(f(x_t)-f(x_\ast))}{\norm[g_t]^2_\ast}.
\end{align}
Despite the well-known connection between projected
subgradient descent and mirror descent~\citep{BECK2003167}, 
this generalization of Polyak's stepsize is absent from
the literature.
For completeness, we include analysis of the non-smooth
case in Section \ref{sec:nonsmooth} of the appendix, including both a $O(\nicefrac{1}{\sqrt{t}})$ convergence and a last iterate convergence result.
As expected, mirror descent with the mirror Polyak stepsize maintains the benefits of mirror descent ---
it permits a mild dependence on the dimension of the space. 
However, it inherits the impractical issues with 
the Polyak stepsize --- knowledge of $f(x_\ast)$ 
and an exact gradient or subgradient.

In the stochastic setting \citet{loizou2020stochastic} propose the more practical stochastic Polyak stepsize (SPS),  $\eta_t = \nicefrac{(f_{\xi_t}(x_t)-f_{\xi_t}^\ast)}{c\norm[\nabla f_{\xi_t}(x_t)]^2_2}$, and the bounded variant SPS\textsubscript{max}, $\eta_t = \min\{ \nicefrac{(f_{\xi_t}(x_t)-f_{\xi_t}^\ast)}{c\norm[\nabla f_{\xi_t}(x_t)]^2_2}, \eta_b\}$.
Where $f_{\xi_t}^\ast$ is known in many machine
learning applications, and $c$ is a scaling parameter
that depends on the class of functions being optimized~\citep{loizou2020stochastic}.

Similar to our generalization of Polyak's stepsize \eqref{non-smooth-step}, we propose a
generalization of SPS and SPS\textsubscript{max} for mirror descent,
the mirror stochastic Polyak stepsize (\alg{}) and the bounded variant \algmax{},
\begin{align}\label{msps}
   \mbox{\alg{} : } \eta_t = \frac{\mupsi(f_{\xi_t}(x_t)-f_{\xi_t}^\ast)}{c\norm[\nabla f_{\xi_t}(x_t)]^2_\ast},
\end{align}
\begin{align}\label{msps-max}
   \mbox{\algmax{} : } \eta_t = \min \left\{\frac{\mupsi(f_{\xi_t}(x_t)-f_{\xi_t}^\ast)}{c\norm[\nabla f_{\xi_t}(x_t)]^2_\ast}, \eta_b\right\}.
\end{align}

\subsubsection{Self-bounding property of \alg{}}

An important property of SPS and \alg{} is its
self-bounding property for when $f_{\xi_t}$ is $L$-smooth and $\mu$-strongly convex with respect to a norm $\norm$,
\begin{align}
    \frac{\mupsi}{2c L} \leq \eta_t = \frac{\mupsi(f_{\xi_t}(x_t)-f_{\xi_t}^\ast)}{c\norm[\nabla f_{\xi_t}(x_t)]^2_\ast}\leq \frac{\mupsi}{2c \mu}.\label{msps-bound}
\end{align}

We extensively use the lower bound, also
known as the self-bounding property of smooth
functions~\citep{srebro2010optimistic}, and
we provide a complete proof in the appendix (Section \ref{lower-proof}).
A proof of the upper bound can be found
in \citet{orabona2019modern}[Corollary 7.6].


\section{Convergence with constant stepsize in  relatively smooth optimization}\label{rel-section}

In this section we provide new convergence results
for SMD with constant stepsize 
under relative smoothness.
We provide the following lemma
which allows us to bound the last two terms in (\ref{progress-convex}).
This result can be seen as a generalization of 
Lemma 2 in \citet{collins2008exponentiated},
where the exponentiated gradient algorithm is studied
under the relative smoothness assumption.
\begin{lemma}\label{rel-smooth-lemma}
Suppose $f$ is $L$-smooth relative to $\psi$.
Then if $\eta \leq \frac{1}{L}$  we
have
\[
-B_\psi(x_{t+1};x_{t}) + \eta\langle \nabla f(x_t), x_t-x_{t+1}\rangle\leq \eta (f(x_t)-f(x_{t+1})).
\]
\end{lemma}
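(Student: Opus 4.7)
The plan is to apply the definition of relative smoothness at the pair $(x_{t+1}, x_t)$, rearrange to isolate the inner product $\langle \nabla f(x_t), x_t - x_{t+1}\rangle$, and then use the stepsize condition $\eta L \leq 1$ together with non-negativity of the Bregman divergence to absorb the slack.

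First, $L$-smoothness of $f$ relative to $\psi$ says $B_f(x_{t+1};x_t) \leq L\, B_\psi(x_{t+1};x_t)$. Unpacking the left-hand side,
\[
f(x_{t+1}) - f(x_t) - \langle \nabla f(x_t), x_{t+1}-x_t\rangle \leq L\, B_\psi(x_{t+1};x_t),
\]
which, after negating and moving terms, gives
\[
\langle \nabla f(x_t), x_t - x_{t+1}\rangle \leq L\, B_\psi(x_{t+1};x_t) + f(x_t) - f(x_{t+1}).
\]

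Next I multiply both sides by $\eta > 0$ and subtract $B_\psi(x_{t+1};x_t)$ from each side to obtain
\[
-B_\psi(x_{t+1};x_t) + \eta\langle \nabla f(x_t), x_t - x_{t+1}\rangle \leq (\eta L - 1)\, B_\psi(x_{t+1};x_t) + \eta\bigl(f(x_t) - f(x_{t+1})\bigr).
\]

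Finally, since $\eta \leq 1/L$ we have $\eta L - 1 \leq 0$, and since $\psi$ is convex the Bregman divergence satisfies $B_\psi(x_{t+1};x_t) \geq 0$, so the first term on the right is non-positive and can be dropped, yielding the claimed inequality. There is no real obstacle here: the result is essentially a one-line consequence of the definition of relative smoothness combined with the stepsize constraint, and it mirrors the standard descent-lemma argument used in the Euclidean smooth case.
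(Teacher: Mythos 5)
Your proof is correct and is essentially the same argument as the paper's: both apply relative smoothness at the pair $(x_{t+1},x_t)$, rewrite the inner product via $B_f(x_{t+1};x_t)$, and use $\eta L\leq 1$ together with $B_\psi(x_{t+1};x_t)\geq 0$ (convexity of $\psi$) to discard the slack term. The only cosmetic difference is that the paper first rescales $L$-relative smoothness to $\tfrac{1}{\eta}$-relative smoothness, whereas you keep the factor $(\eta L-1)$ explicit and drop it at the end.
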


\subsection{Relative smoothness and strong convexity}

For an appropriately selected stepsize, we have
that SMD enjoys a linear rate of convergence
to a neighborhood of the minimum $x_\ast$.

\begin{theorem} \label{thm-rel-smooth-strong}
Assume $\psi$ satisfies assumption 
\ref{int-assumption}.
Furthermore assume $f$ to be $\mu$-strongly convex 
relative to $\psi$ over $\decSet$,
and $f_\xi$ to be  $L$-smooth relative to $\psi$ over $\decSet$ almost surely.
Then SMD with stepsize $\eta \leq \frac{1}{L}$
guarantees
\[
   \expect{B_\psi(x_\ast;x_{t+1})} \leq (1-\mu \eta)^tB_\psi(x_\ast;x_1) + \frac{\sigma_\mathcal{X}^2}{\mu}.
\]
\end{theorem}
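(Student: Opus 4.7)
The plan is to combine Lemma~\ref{md-progress} with the relative smoothness Lemma~\ref{rel-smooth-lemma} applied to the stochastic function $f_{\xi_t}$, then use the $\mu$-relative strong convexity of the deterministic $f$ to pull out the contraction factor $(1-\mu\eta)$, and finally absorb the leftover noise term into $\sigma_\mathcal{X}^2$ via the definition of the constrained optimal objective difference. Concretely, I would start by invoking Lemma~\ref{md-progress} with $g_t=\nabla f_{\xi_t}(x_t)$ to obtain
\[
B_\psi(x_\ast;x_{t+1}) \leq B_\psi(x_\ast;x_t) - \eta\langle \nabla f_{\xi_t}(x_t), x_t-x_\ast\rangle - B_\psi(x_{t+1};x_t) + \eta\langle \nabla f_{\xi_t}(x_t), x_t-x_{t+1}\rangle,
\]
and then apply Lemma~\ref{rel-smooth-lemma} to $f_{\xi_t}$ (which is $L$-smooth relative to $\psi$) since $\eta\leq 1/L$, giving the bound
\[
-B_\psi(x_{t+1};x_t) + \eta\langle\nabla f_{\xi_t}(x_t), x_t-x_{t+1}\rangle \leq \eta\bigl(f_{\xi_t}(x_t)-f_{\xi_t}(x_{t+1})\bigr).
\]

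Next, I would take conditional expectation with respect to $\xi_t$. Using $\expect[\xi_t]{\nabla f_{\xi_t}(x_t)}=\nabla f(x_t)$ and $\expect[\xi_t]{f_{\xi_t}(x_t)}=f(x_t)$, the inequality becomes
\[
\expect[\xi_t]{B_\psi(x_\ast;x_{t+1})} \leq B_\psi(x_\ast;x_t) - \eta\langle\nabla f(x_t), x_t-x_\ast\rangle + \eta f(x_t) - \eta\,\expect[\xi_t]{f_{\xi_t}(x_{t+1})}.
\]
Now I would apply $\mu$-relative strong convexity of $f$ with respect to $\psi$, which gives $\langle\nabla f(x_t), x_t-x_\ast\rangle \geq f(x_t)-f(x_\ast) + \mu B_\psi(x_\ast;x_t)$, so the above collapses to
\[
\expect[\xi_t]{B_\psi(x_\ast;x_{t+1})} \leq (1-\mu\eta)\,B_\psi(x_\ast;x_t) + \eta\bigl(f(x_\ast)-\expect[\xi_t]{f_{\xi_t}(x_{t+1})}\bigr).
\]

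The key remaining step — and the main subtlety — is to control the residual term $f(x_\ast)-\expect[\xi_t]{f_{\xi_t}(x_{t+1})}$ by $\sigma_\mathcal{X}^2$ rather than the larger $\sigma^2$. This is exactly where Assumption~\ref{int-assumption} pays off: it guarantees $x_{t+1}\in\decSet$, and therefore $f_{\xi_t}(x_{t+1})\geq \inf_{x\in\decSet} f_{\xi_t}(x) = f_{\xi_t}^\ast(\decSet)$ almost surely, so
\[
f(x_\ast)-\expect[\xi_t]{f_{\xi_t}(x_{t+1})} \leq f(x_\ast) - \expect[\xi_t]{f_{\xi_t}^\ast(\decSet)} = \sigma_\mathcal{X}^2.
\]
Combining gives $\expect[\xi_t]{B_\psi(x_\ast;x_{t+1})} \leq (1-\mu\eta)B_\psi(x_\ast;x_t) + \eta\sigma_\mathcal{X}^2$. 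Taking total expectation, unrolling the recursion, and bounding the geometric series $\sum_{s=0}^{t-1}(1-\mu\eta)^s \leq 1/(\mu\eta)$ yields the stated bound. The main obstacle I anticipate is conceptual rather than technical — namely, recognizing that one should apply Lemma~\ref{rel-smooth-lemma} to the sampled $f_{\xi_t}$ (not to $f$) and that the resulting $-\expect[\xi_t]{f_{\xi_t}(x_{t+1})}$ term must be controlled by the constrained infimum, which is what motivates the refined $\sigma_\mathcal{X}^2$ and forces the use of Assumption~\ref{int-assumption} to ensure $x_{t+1}\in\decSet$.
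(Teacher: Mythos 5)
Your proposal is correct and follows essentially the same route as the paper's proof: Lemma~\ref{md-progress} combined with Lemma~\ref{rel-smooth-lemma} applied to the sampled $f_{\xi_t}$, lower-bounding $f_{\xi_t}(x_{t+1})$ by the constrained infimum $f_{\xi_t}^\ast(\decSet)$ (valid since the update's argmin is taken over $\decSet$), relative strong convexity of $f$ to produce the $(1-\mu\eta)$ contraction, and unrolling with the geometric series. The only differences are cosmetic — you take the conditional expectation before invoking the constrained infimum and apply strong convexity directly to the inner product rather than first assembling $B_f(x_\ast;x_t)$ — and neither affects correctness.
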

Importantly, we do not assume $f_\xi$ to be convex and under interpolation with respect $\decSet$ we have $\sigma_\decSet^2=0$ implying SMD will
converge to the true solution if $\psi$ is strictly convex.
If $\psi$ is strongly convex then
Theorem \ref{thm-rel-smooth-strong} provides a linear
rate on the expected distance  $\norm[x_{t+1}-x_{\ast}]^2$ for some norm
$\norm$. For example \citet{collins2008exponentiated} show that a particular loss $f_\xi$ appearing in the dual problem of fitting regularized log-linear models is both smooth and strongly convex relative to the negative entropy function. 
In this case, our results provide a linear rate on
$\expect{\norm[x_{t+1}-x_\ast]^2_1}$ for the stochastic exponentiated gradient algorithm since $\psi$ (negative entropy) is strongly convex with respect to the norm $\norm_1$. 

In the case of interpolation, we have that $B_\psi(x_\ast;x_{t+1}) \to 0$ almost surely. If $\psi$ is strictly convex then $x_{t} \to x_\ast$ almost surely.

\begin{corollary}\label{thm:as-rel-strong-smooth}
Under the same assumptions as Theorem \ref{thm-rel-smooth-strong}, if $\sigma^2_\decSet =0$ then $B_\psi(x_\ast;x_{t+1}) \to 0$ almost surely. 
\end{corollary}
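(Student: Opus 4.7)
My plan is to refine the argument behind Theorem~\ref{thm-rel-smooth-strong} so that it yields a per-step inequality in \emph{conditional} expectation, and then close the gap between in-expectation and almost-sure convergence via a supermartingale argument.

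First I would combine Lemma~\ref{md-progress} applied with $g_t = \nabla f_{\xi_t}(x_t)$ with Lemma~\ref{rel-smooth-lemma} applied to $f_{\xi_t}$ (which is $L$-smooth relative to $\psi$ almost surely, so $\eta \leq 1/L$ makes the lemma applicable). This bounds the last two terms of~\eqref{progress-convex} by $\eta(f_{\xi_t}(x_t) - f_{\xi_t}(x_{t+1}))$, yielding the sample-path inequality
\[
B_\psi(x_\ast; x_{t+1}) \leq B_\psi(x_\ast; x_t) - \eta \langle \nabla f_{\xi_t}(x_t), x_t - x_\ast\rangle + \eta\bigl(f_{\xi_t}(x_t) - f_{\xi_t}(x_{t+1})\bigr).
\]
Taking conditional expectation with respect to the natural filtration $\mathcal{F}_t$, using unbiasedness of the stochastic gradient together with relative strong convexity of $f$ (which gives $\langle \nabla f(x_t), x_t - x_\ast \rangle \geq f(x_t) - f(x_\ast) + \mu B_\psi(x_\ast; x_t)$), and using $x_{t+1} \in \decSet$ to conclude $f_{\xi_t}(x_{t+1}) \geq f_{\xi_t}^\ast(\decSet)$, I obtain the conditional descent inequality
\[
\expect{B_\psi(x_\ast; x_{t+1}) \mid \mathcal{F}_t} \leq (1-\mu\eta)\, B_\psi(x_\ast; x_t) + \eta\, \sigma_\decSet^2.
\]

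Next, under constrained interpolation $\sigma_\decSet^2 = 0$, so the above exhibits $\{B_\psi(x_\ast; x_t)\}$ as a nonnegative supermartingale adapted to $\{\mathcal{F}_t\}$. Doob's supermartingale convergence theorem then guarantees an almost-sure limit $Z \geq 0$. To identify $Z = 0$, I apply Fatou's lemma together with the in-expectation bound from Theorem~\ref{thm-rel-smooth-strong}, which in this regime reduces to $\expect{B_\psi(x_\ast; x_t)} \leq (1-\mu\eta)^{t-1} B_\psi(x_\ast; x_1) \to 0$. Hence $\expect{Z} \leq \liminf_t \expect{B_\psi(x_\ast; x_t)} = 0$, forcing $Z = 0$ almost surely.

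The main point of care, rather than a genuine obstacle, is insisting on the \emph{conditional} per-step inequality rather than merely quoting the telescoped in-expectation bound of Theorem~\ref{thm-rel-smooth-strong}, since the supermartingale convergence theorem requires the inequality to hold pathwise on $\mathcal{F}_t$. It is also essential that we use the constrained quantity $\sigma_\decSet^2$ (not $\sigma^2$) when lower bounding $f_{\xi_t}(x_{t+1})$, so that constrained interpolation genuinely kills the residual noise term; otherwise the supermartingale would only be ``approximate'' and the a.s. limit could be strictly positive.
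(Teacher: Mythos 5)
Your proof is correct and follows essentially the same route as the paper: the paper likewise extracts the conditional one-step inequality $\expect[t]{B_\psi(x_\ast;x_{t+1})} \leq (1-\mu\eta)B_\psi(x_\ast;x_t)$ from the proof of Theorem~\ref{thm-rel-smooth-strong} under $\sigma^2_\decSet = 0$ and then invokes a stochastic-approximation lemma (Franci \& Staudigl, Lemma~4.7) that encapsulates precisely your Doob-supermartingale-plus-Fatou argument. The only difference is that you prove that last step by hand rather than citing it, which is a fine (and arguably more self-contained) choice.
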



\subsection{Relative smoothness without convexity}
Similar to Theorem \ref{thm-rel-smooth-strong}
we show convergence of a quantity 
to a neighborhood,
only assuming $f_\xi$ to be $L$-smooth relative to 
$\psi$, where $f$ or $f_\xi$ need not be convex.

\begin{theorem} \label{thm-rel-non-convex}
Assume $\psi$ satisfies assumption 
\ref{int-assumption}.
Furthermore assume $f_\xi$ to be  $L$-smooth relative to $\psi$ over $\decSet$ almost surely.
Then SMD with stepsize $\eta \leq \frac{1}{L}$
guarantees
\[
   \expect{\frac{1}{t}\sum_{s=1}^t \divergence{f}{x_{\ast}}{x_s}} \leq \frac{\divergence{\psi}{x_\ast}{x_1}}{\eta t} +\sigma_\decSet^2.
\]
\end{theorem}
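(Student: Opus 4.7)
The plan is to combine the one step mirror descent inequality from Lemma \ref{md-progress} with the relative smoothness machinery in Lemma \ref{rel-smooth-lemma}, and then exploit a Bregman-divergence identity to avoid needing any convexity of $f_\xi$ or $f$. Concretely, apply Lemma \ref{md-progress} with $g_t = \nabla f_{\xi_t}(x_t)$ to obtain
\[
B_\psi(x_\ast; x_{t+1}) \leq B_\psi(x_\ast; x_t) - \eta \langle \nabla f_{\xi_t}(x_t), x_t - x_\ast\rangle - B_\psi(x_{t+1}; x_t) + \eta \langle \nabla f_{\xi_t}(x_t), x_t - x_{t+1}\rangle.
\]
Then, since $f_{\xi_t}$ is $L$-smooth relative to $\psi$ and $\eta \leq 1/L$, apply Lemma \ref{rel-smooth-lemma} to $f_{\xi_t}$ (not $f$) to absorb the last two terms into $\eta(f_{\xi_t}(x_t) - f_{\xi_t}(x_{t+1}))$. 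Because $x_{t+1} \in \decSet$, bound $f_{\xi_t}(x_{t+1}) \geq f_{\xi_t}^\ast(\decSet)$.

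The crucial trick, which bypasses convexity, is the Bregman identity
\[
\langle \nabla f_{\xi_t}(x_t), x_t - x_\ast\rangle = f_{\xi_t}(x_t) - f_{\xi_t}(x_\ast) + B_{f_{\xi_t}}(x_\ast; x_t),
\]
which is just the definition of $B_{f_{\xi_t}}(x_\ast; x_t)$ rearranged. Substituting this into the inequality above cancels the $f_{\xi_t}(x_t)$ contributions and leaves
\[
\eta B_{f_{\xi_t}}(x_\ast; x_t) \leq B_\psi(x_\ast; x_t) - B_\psi(x_\ast; x_{t+1}) + \eta(f_{\xi_t}(x_\ast) - f_{\xi_t}^\ast(\decSet)).
\]

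Next, take expectations. Since $\expect[\xi_t]{\nabla f_{\xi_t}(x_t)\mid x_t} = \nabla f(x_t)$, linearity gives $\expect{B_{f_{\xi_t}}(x_\ast; x_t)\mid x_t} = B_f(x_\ast; x_t)$, and by definition $\expect{f_{\xi_t}(x_\ast) - f_{\xi_t}^\ast(\decSet)} = f(x_\ast) - \expect{f_{\xi_t}^\ast(\decSet)} = \sigma_\decSet^2$. Summing from $s=1$ to $t$ telescopes the $B_\psi$ terms, and using $B_\psi(x_\ast; x_{t+1}) \geq 0$ gives
\[
\eta \sum_{s=1}^{t}\expect{B_f(x_\ast; x_s)} \leq B_\psi(x_\ast; x_1) + \eta t \sigma_\decSet^2,
\]
and dividing by $\eta t$ yields the claim.

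The main obstacle is conceptual rather than computational: one must avoid using convexity of $f_\xi$ to lower bound $\langle \nabla f_{\xi_t}(x_t), x_t - x_\ast\rangle$ by a function gap. The Bregman identity sidesteps this, and the remaining $B_{f_{\xi_t}}(x_\ast; x_t)$ term is exactly the quantity we wish to control (in expectation it becomes $B_f(x_\ast; x_s)$, which is nonnegative by convexity of $f$ implied by $\sigma_\decSet^2 < \infty$ together with $x_\ast$ being a minimizer only implicitly; more cleanly, $B_f \geq 0$ is not needed for the bound itself, only for the bound to be meaningful). A second subtle point is ensuring that $\decSet$-restriction is used at the right place: $f_{\xi_t}(x_{t+1}) \geq f_{\xi_t}^\ast(\decSet)$ rather than the unconstrained infimum, which is what lets the neighborhood shrink to the weaker constant $\sigma_\decSet^2$.
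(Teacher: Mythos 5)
Your proposal is correct and follows essentially the same route as the paper's proof: Lemma \ref{md-progress}, then Lemma \ref{rel-smooth-lemma} applied to $f_{\xi_t}$, the bound $f_{\xi_t}(x_{t+1}) \geq f_{\xi_t}^\ast(\decSet)$, and recognizing the Bregman divergence of the loss in place of a convexity lower bound, followed by expectation and telescoping. The only cosmetic difference is ordering: you identify $B_{f_{\xi_t}}(x_\ast;x_t)$ before taking the conditional expectation, whereas the paper takes the expectation first and then recognizes $B_f(x_\ast;x_t)$ — equivalent by linearity.
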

The above guarantee also implies a result for the ``best'' iterate, $\expect{\min_{1\leq s \leq t}\divergence{f}{x_\ast}{x_s}}$, to a neighborhood. 
If $f$ is strictly convex then this
implies at least one iterate $x_s$ 
converges to a neighborhood of $x_\ast$ in expectation.
If $f$ is strictly convex and 1-coercive\footnote{A function is 1-coercive if $\lim_{\norm[x]\to \infty} \nicefrac{f(x)}{\norm[x]} = + \infty$.}
then its conjugate function $f^\ast$ is
also strictly convex~\citep{hiriart2004fundamentals}[Corollary 4.1.3] and
we have $\divergence{f}{x_\ast}{x_s} = \divergence{f^\ast}{\nabla f(x_s)}{\nabla f(x_\ast)}$~\citep{bauschke1997legendre}[Theorem 3.7],
implying that the average gradients $\nicefrac{1}{t}\sum_{s=1}^t\nabla f(x_s)$ or at least one of $\nabla f(x_s)$ converges
to a neighborhood of $\nabla f(x_\ast)$.
If $f$ happens to be convex and $L$-smooth 
with respect to a norm $\norm$ then
$\frac{1}{2L}\norm[\nabla f (x_\ast) -\nabla f(x_s)]_\ast^2 \leq \divergence{f}{x_\ast}{x_s}$\citep{nesterov2018lectures}[Theorem 2.1.5],
providing a similar convergence guarantee on the distance of
gradients to $\nabla f(x_\ast)$.

Similar to Theorem \ref{thm-rel-smooth-strong}, an almost surely convergence result follows from Theorem \ref{thm-rel-non-convex} under interpolation.

\begin{corollary}\label{thm:as-non-convex}
Under the assumptions of Theorem \ref{thm-rel-non-convex}, if $f$ is convex and $\sigma^2_\decSet =0$ then $B_f(x_\ast;x_t) \to 0$ almost surely.
\end{corollary}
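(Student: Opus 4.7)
The plan is to upgrade the time-averaged bound of Theorem \ref{thm-rel-non-convex} to an almost-sure statement by extracting and iterating a one-step supermartingale inequality for $B_\psi(x_\ast;x_t)$. The key observation is that once the constrained interpolation condition $\sigma_\decSet^2=0$ is imposed, the residual term in the per-iteration descent inequality vanishes, leaving a Robbins--Siegmund-ready recursion that is sharper than the Cesàro bound in Theorem \ref{thm-rel-non-convex}.

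First I would reproduce the single step behind Theorem \ref{thm-rel-non-convex}. Applying Lemma \ref{md-progress} with $g_t=\nabla f_{\xi_t}(x_t)$ and then Lemma \ref{rel-smooth-lemma} to $f_{\xi_t}$ (valid since $\eta\leq 1/L$ and $f_{\xi_t}$ is $L$-smooth relative to $\psi$ almost surely) yields
\[
B_\psi(x_\ast;x_{t+1})\leq B_\psi(x_\ast;x_t)-\eta\langle\nabla f_{\xi_t}(x_t),x_t-x_\ast\rangle+\eta\bigl(f_{\xi_t}(x_t)-f_{\xi_t}(x_{t+1})\bigr).
\]
Under $\sigma_\decSet^2=0$, constrained interpolation gives $f_{\xi_t}(x_{t+1})\geq \inf_{x\in\decSet} f_{\xi_t}(x)=f_{\xi_t}(x_\ast)$ almost surely (using $x_{t+1}\in\decSet$ from the mirror descent update), so $f_{\xi_t}(x_t)-f_{\xi_t}(x_{t+1})\leq f_{\xi_t}(x_t)-f_{\xi_t}(x_\ast)$. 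Taking conditional expectation given the natural filtration $\mathcal{F}_t$, using $\expect{\nabla f_{\xi_t}(x_t)\mid\mathcal{F}_t}=\nabla f(x_t)$ and the identity $\langle\nabla f(x_t),x_t-x_\ast\rangle=(f(x_t)-f(x_\ast))+B_f(x_\ast;x_t)$ (immediate from the definition of $B_f$), the two $(f(x_t)-f(x_\ast))$ terms cancel and I arrive at
\[
\expect{B_\psi(x_\ast;x_{t+1})\mid\mathcal{F}_t}\leq B_\psi(x_\ast;x_t)-\eta B_f(x_\ast;x_t).
\]

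Convexity of $f$ ensures $B_f(x_\ast;x_t)\geq 0$, so this is a genuine supermartingale inequality with a nonnegative correction term. Invoking the Robbins--Siegmund theorem (or, equivalently, a direct telescoping and Tonelli argument giving $\eta\sum_{t\geq 1}\expect{B_f(x_\ast;x_t)}\leq B_\psi(x_\ast;x_1)<\infty$) yields $\sum_{t\geq 1}B_f(x_\ast;x_t)<\infty$ almost surely, and hence $B_f(x_\ast;x_t)\to 0$ almost surely. The main hurdle is the algebraic cancellation in the second step: one must derive the correct identity for $\langle\nabla f(x_t),x_t-x_\ast\rangle$ and verify that the interpolation hypothesis is applied at a feasible point $x_{t+1}\in\decSet$; everything else is routine martingale theory.
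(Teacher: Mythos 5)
Your proposal is correct and follows essentially the same route as the paper: both derive the one-step inequality $\expect{B_\psi(x_\ast;x_{t+1})\mid\mathcal{F}_t}\leq B_\psi(x_\ast;x_t)-\eta B_f(x_\ast;x_t)$ (the paper simply reuses it from the proof of Theorem \ref{thm-rel-non-convex}, where the residual $\eta(f(x_\ast)-\expect{f_{\xi_t}^\ast(\decSet)})$ vanishes when $\sigma_\decSet^2=0$), note $B_f(x_\ast;x_t)\geq 0$ by convexity, and conclude via the Robbins--Siegmund lemma. Your alternative telescoping-plus-Tonelli remark is a valid, slightly more elementary way to finish the same argument.
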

\subsubsection{Application of Theorem \ref{thm-rel-non-convex}, solving linear systems}

Theorem \ref{thm-rel-non-convex} provides convergence for the unconventional quantity $B_f(x_\ast;x_t)$, however, this quantity is sometimes equal to $f(x_t)-f(x_\ast)$, as in the case of solving linear systems.
In this case Theorem \ref{thm-rel-non-convex} automatically gives a result for the quantity $\expect{f(\bar{x}_t)- f(x_\ast)}$ if $f$ is convex.

More formally, solving a constrained linear system amounts to to finding $x_\ast$ such that
\begin{align}
    Ax_\ast = b, \mbox{ and } x_\ast \in \mathcal{X}.\label{ex:system}
\end{align}
Problem \eqref{ex:system} can be reformulated as a constrained finite sum problem with $f_i(x) = \frac{1}{2}(\langle A_{i:},x \rangle -b_i)^2$, where $A_{i:}$ and $b_i$ denote the $i^{th}$ row and component of $A$ and $b$ respectively.
Note that $x_\ast$ interpolates all $f_i$ since $f_i(x_\ast)=0$ by construction, with $\nabla f_i(x_\ast)=0$ and $\sigma^2_\mathcal{X} =0$.
Since the divergence $B_{f_i}$ is symmetric\footnote{See Proposition \ref{prop:sym} in the appendix for a proof.} and $B_f$ is simply the average over $B_{f_i}$ it holds that
\begin{align*}
        B_f(x_\ast; x_t) = \sum_{i=1}^n\frac{B_{f_i}(x_\ast;x_t)}{n} =  \sum_{i=1}^n\frac{B_{f_i}(x_t;x_\ast)}{n} =\sum_{i=1}^n\frac{f_i(x_t)-f_i(x_\ast)}{n} = f(x_t)-f(x_\ast).
\end{align*}
Where the third equality follows from the fact that $\nabla f_i(x_\ast) =0$.

Theorem \ref{thm-rel-non-convex} therefore gives a convergence result for the gap $\expect{f(\bar{x}_t)-f(x_\ast)}$, and Corollary \ref{thm:as-non-convex} guarantees $f(x_t) \to f(x_\ast)$, provided each $f_i$ is relatively smooth.
Relative smoothness of $f_i$ holds if $\psi$ is strongly convex  since for any norm $\norm$ there exists a constant $L_i$ for which $f_i$ is $L_i$-smooth with respect to $\norm$.
Therefore, taking $L = \max_i L_i$ gives
\[
B_{f_i}(x;y) \leq \frac{L}{2}\norm[x-y]^2= \frac{L\mu_{\psi}}{2\mu_{\psi}}\norm[x-y]^2 \leq \frac{L}{\mu_{\psi}}B_\psi(x;y).
\]

\subsubsection{EG for finding stationary distributions of Markov chains}
An important example of problem \eqref{ex:system} for which $x_\ast$ exists and $\mathcal{X} \neq \reals^d$ is the problem of finding a stationary distribution of a Markov chain with transition matrix $P$.
That is, to find $x_\ast$ such that
\begin{align}
        (P^\top -I)x_\ast = 0, \mbox{ and } x_\ast \in \Delta^n.\label{ex:markov}
\end{align}
Problem \eqref{ex:markov} is ubiquitous in science and machine learning, for example in online learning many algorithms  require computing a stationary distribution of a Markov chain at each iteration\citep{greenwald2006bounds, blum2007external}.

From the above discussion we can formulate the problem \eqref{ex:markov} as a finite-sum problem with constraint $\mathcal{X}=\Delta^m$. 
A natural choice for the simplex constraint is the stochastic EG algorithm (SMD with $\psi$ taken to be negative entropy).
Denoting the $g_i = (P^\top-I)_{i:}$ we have that $f_i(x)=\frac{1}{2}\langle g_i, x \rangle^2$, and is $1$-smooth with respect to $\norm_1$. 
Since negative entropy is $1$-strongly convex with respect to $\norm_1$ we have that $f_i$ is also $1$-smooth relative to $\psi$.
Therefore, Theorem \ref{thm-rel-non-convex} and Corollary \ref{thm:as-non-convex} guarantee $\expect{f(\bar{x})-f(x_\ast)}$ and $f(x_t)-f(x_\ast)$ converge to zero, respectively, for the following stochastic update: sample $i \in \{1,\cdots,n\}$ uniformly at random and
\begin{align}\label{ex:EG_fixed}
    y_{t+1} = x_t \odot \exp{ \left(-\nabla f_i(x_t)\right)}, \, x_{t+1}=\frac{y_{t+1}}{\norm[y_{t+1}]_1}.
\end{align}
Where $\odot$ and $\exp$ are component wise
multiplication and component wise exponentiation respectively.

We highlight that no other existing works show convergence without a neighborhood under interpolation for the EG algorithm. 
Additionally, problem \eqref{ex:markov} also exhibits a natural occurence where $f_i^\ast$ is known and equal to $0$, rendering  \alg{}~\eqref{msps} computable. 
In Section \ref{adaptive-section}, we demonstrate similar guarantees with \alg{}.

\subsection{Comparison with related works}

In the constant stepsize and relatively smooth regime, \citet{hanzely2018fastest} and \citet{dragomir2021fast}
provide convergence guarantees for SMD under different assumptions
and to different neighborhoods.
We provide an in-depth comparison as well as demonstrate via an example where convergence to the solution is not guaranteed by previous works but is possible by Theorem \ref{thm-rel-smooth-strong}.

\citet{hanzely2018fastest}
make an assumption akin to bounded variance,
they assume $\nicefrac{\expect{\langle \nabla f(x_t)- \nabla f_{\xi_t}(x_t),x_{t+1}-\tilde{x}_{t+1}\rangle|x_t}}{\eta} \leq  \sigma^2 $,
where $\tilde{x}_{t+1}$ is the mirror descent iterate
using the true gradient $\nabla f(x_t)$.
In the case of relative smoothness and relative strong convexity they show a linear rate of convergence to a neighborhood for $\expect{f(\bar{x}_t) - f(x_\ast)}$, where $\bar{x}_t$ is a particular weighted average of the iterates $(x_1,\cdots,x_t)$. Without relative strong convexity, a similar result is shown for the uniform average $\bar{x}_t$ with a rate of $O(\nicefrac{1}{T})$ using a particular schedule of stepsizes.

Our results more closely resemble the work of \citet{dragomir2021fast}, giving the same rates and exact convergence with interpolation, however, there are several \emph{important} differences.
Firstly, our results apply to a wider range
of problems and mirror descent methods.
\citet{dragomir2021fast} do not decouple the domain $\domainBregman$ of
the function $\psi$ and the constraint set $\decSet$,
they assume that $x_{t+1} \in \interior \decSet$ where $\nabla \psi(x_{t+1}) = \nabla \psi(x_t) - \eta_t \nabla f_{\xi_t}(x_t) $.
Therefore their definition of mirror descent \emph{precludes} 
the famous exponentiated gradient algorithm or
projected gradient descent---no projection steps are allowed in their definition.
Furthermore, our analysis allows $x_\ast$ to be anywhere in $\decSet$ while \citet{dragomir2021fast} require $\nabla f(x_\ast)=0$ and exclude the
case when $x_\ast$ is on the boundary of $\decSet$.
Secondly,
our neighborhoods of convergence are different, they show convergence to a different neighborhood $\eta \nicefrac{\sigma^2}{\mu}$ ($\eta \sigma^2$ in the smooth case), 
where $\sigma^2$ is such that
$\expect{\norm[\nabla f_\xi(x_\ast)]^2_{\nabla^2 \psi^\ast(z_t)}} \leq \sigma^2$.\footnote{Note that \citet{dragomir2021fast} assume $\psi$ to be twice differentiable and strictly convex and $\psi^\ast$ is the conjugate function of $\psi$. $z_t$ is some point within the line segment between $\nabla \psi (x_t)-2\eta \nabla f_i(x_\ast)$ and $\nabla \psi(x_t)$.}
Thirdly, our results hold for $\eta \leq \nicefrac{1}{L}$ while they
require $\eta \leq \nicefrac{1}{2L}$.
Finally, when $f$ is strongly convex relative to $\psi$ they require $f_i$ to be convex while we allow $f_i$ to be non-convex.

\subsubsection{An example of Theorem \ref{thm-rel-smooth-strong}} 

\begin{figure}
    \begin{subfigure}{0.5\textwidth}
    \includegraphics[width=0.9\linewidth]{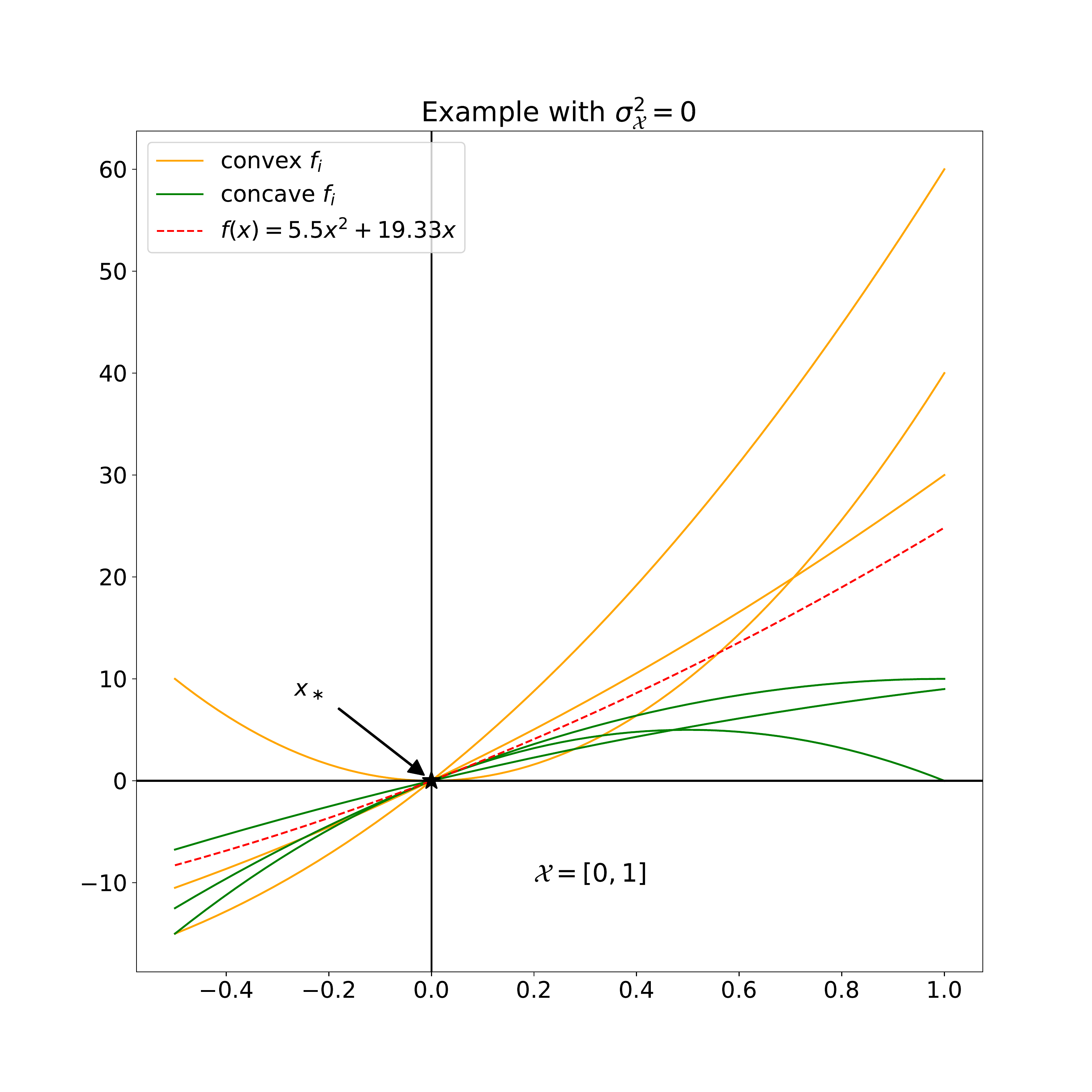}
    \end{subfigure}
    \begin{subfigure}{0.5\textwidth}
    \includegraphics[width=0.9\linewidth]{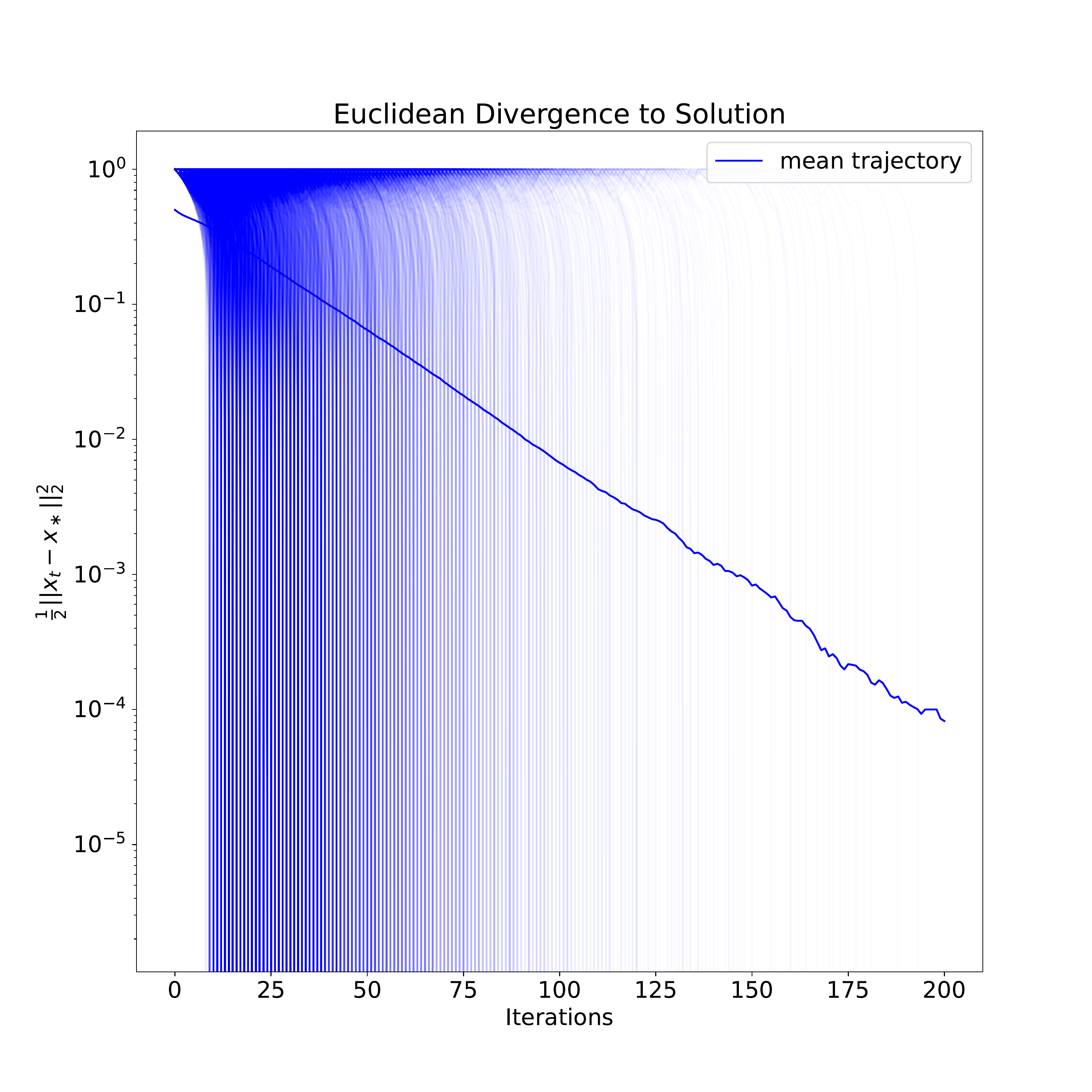}
    \end{subfigure}
    \caption{Finite-sum example of Theorem \ref{thm-rel-smooth-strong} with SPGD and $\psi = \nicefrac{1}{2}\norm^2_2$. (left) $f(x)$ is strongly convex and is a sum of smooth $f_i$ that are  either non-convex or strongly convex functions. 
    (right) As predicted by Theorem \ref{thm-rel-smooth-strong}, linear convergence is observed for the mean trajectory of SPGD over 10,000 runs.}
    \label{fig:pgd_example}
\end{figure}

To demonstrate the differences with previous works we consider a finite-sum example with SPGD and $\psi = \nicefrac{1}{2}\norm^2_2$, where each $f_i:\reals \to \reals: x \mapsto a_ix^2 +b_ix +c_i$ is a quadratic, and the constraint is the closed interval $\decSet = [0,1]$. 
As demonstrated in Figure \ref{fig:pgd_example}, $f$ is the average of both non-convex and strongly convex quadratics, and $f$ is strongly convex relative to $\psi$. 
Additionally, each $f_i$ is $\max_j2|a_j|$-smooth relative to $\psi$.
More importantly, as depicted in Figure \ref{fig:pgd_example} we consider the case where $\sigma^2_\decSet = 0$, interpolation relative to $\decSet$ holds with $f(0) = f_i^\ast(\decSet)$ for all $i$.

In comparison to \citet{hanzely2018fastest}, their results apply with a neighborhood of convergence equal to $\approx 168$. 
We note that their neighborhood depends on the choice of stepsize and we report the smallest neighborhood guaranteed by their results by using their perscribed stepsize.

In comparison to \citet{dragomir2021fast}, their results do not apply for several reasons: SPGD is not included in their analysis (only SGD in the Euclidean case is allowed), $f_i$ is not always convex, and $\nabla f(x_\ast) \neq 0$.
Nevertheless, their variance term in the Euclidean case corresponds to $\expect{\norm[\nabla f_{i}(x_\ast)]^2_2}$, the expected squared norm at the optimum, which has a value of 520 in this constrained finite-sum example.

\section{Convergence of mirror SPS}\label{adaptive-section}

In this section we present our convergence results
for SMD with \algmax{} when 
$f_i$ are $L_i$-smooth and with varying assumptions. 
First, we consider the case when $f$ is strongly convex
relative to $\psi$, a common assumption when analysing
mirror descent under strong convexity~\cite{hazan2014beyond}.
Then we present rates under convexity and smoothness but without relatively strong convexity.
Afterwards, we discuss the results under interpolation and provide examples.

\subsection{Smooth and strongly convex}

With strong convexity of $\psi$ and $f$ being
relatively strongly convex with respect to $\psi$
we can show a linear rate of convergence to 
a neighborhood.
\begin{theorem}\label{thm-rel-strong}
Assume $f_\xi$ is convex and $L$-smooth almost surely with respect to
the norm $\norm$.
Furthermore, assume that $f$ is $\mu$-strongly convex relative to $\psi$ over $\decSet$, where $\psi$ is $\mupsi$-strongly
convex over $\decSet$ with respect to the norm $\norm$ and assumption
\ref{int-assumption} holds.
Then SMD with  \algmax{} and $c \geq \frac{1}{2}$ guarantees
\[
\expect{B_\psi(x_\ast; x_{t+1})} \leq (1-\mu\alpha)^t B_\psi(x_\ast; x_{1}) + \frac{\eta_b\sigma^2}{\alpha\mu}.
\]
Where $\alpha \coloneqq \min\{\nicefrac{\mupsi}{2cL}, \eta_b\}$.
\end{theorem}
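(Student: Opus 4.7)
The plan is to combine the strongly convex mirror descent one-step bound from Lemma~\ref{md-progress}, equation~(\ref{progress-strong}), with the defining structure of \algmax{} and then take expectations. Since $\psi$ is $\mu_\psi$-strongly convex on $\decSet$, the starting inequality is
\[
B_\psi(x_\ast; x_{t+1}) \leq B_\psi(x_\ast; x_t) - \eta_t \langle \nabla f_{\xi_t}(x_t), x_t - x_\ast\rangle + \frac{\eta_t^2}{2\mupsi}\|\nabla f_{\xi_t}(x_t)\|_\ast^2.
\]

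First I would control the quadratic term using the definition of \algmax. In either branch of the $\min$ one has $\eta_t \|\nabla f_{\xi_t}(x_t)\|_\ast^2 \leq \mupsi (f_{\xi_t}(x_t) - f_{\xi_t}^\ast)/c$, hence $\frac{\eta_t^2}{2\mupsi}\|\nabla f_{\xi_t}(x_t)\|_\ast^2 \leq \frac{\eta_t(f_{\xi_t}(x_t)-f_{\xi_t}^\ast)}{2c}$. Then I would apply convexity of $f_{\xi_t}$ to the first-order term, writing $\langle \nabla f_{\xi_t}(x_t), x_t - x_\ast\rangle \geq f_{\xi_t}(x_t) - f_{\xi_t}(x_\ast) = (f_{\xi_t}(x_t)-f_{\xi_t}^\ast) - (f_{\xi_t}(x_\ast) - f_{\xi_t}^\ast)$. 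Combining yields
\[
B_\psi(x_\ast; x_{t+1}) \leq B_\psi(x_\ast; x_t) - \eta_t\bigl(1-\tfrac{1}{2c}\bigr)(f_{\xi_t}(x_t)-f_{\xi_t}^\ast) + \eta_t\bigl(f_{\xi_t}(x_\ast)-f_{\xi_t}^\ast\bigr).
\]

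Next I would decouple $\eta_t$ from the random gradient using the self-bounding inequality~(\ref{msps-bound}), which (together with the clipping at $\eta_b$) gives $\alpha \leq \eta_t \leq \eta_b$ almost surely. Since $c \geq 1/2$ makes the coefficient $(1-\tfrac{1}{2c}) \geq 0$ and $(f_{\xi_t}(x_t)-f_{\xi_t}^\ast) \geq 0$, I can replace $\eta_t$ by its lower bound $\alpha$ in the negative term and by its upper bound $\eta_b$ in the noise term. Taking conditional expectation given $x_t$ and invoking the definitions $\expect{f_{\xi_t}(x_t)-f_{\xi_t}^\ast \mid x_t} = f(x_t)-f(x_\ast) + \sigma^2$ and $\expect{f_{\xi_t}(x_\ast)-f_{\xi_t}^\ast} = \sigma^2$, then applying relative strong convexity $f(x_t)-f(x_\ast) \geq \mu B_\psi(x_\ast; x_t)$, produces a contraction of the form
\[
\expect{B_\psi(x_\ast; x_{t+1}) \mid x_t} \leq (1-\mu\alpha)\, B_\psi(x_\ast; x_t) + \eta_b\sigma^2.
\]
Taking total expectation and unrolling via $\sum_{s=0}^{t-1}(1-\mu\alpha)^s \leq 1/(\mu\alpha)$ gives the claimed neighborhood $\eta_b\sigma^2/(\alpha\mu)$.

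The main obstacle is the coupling between $\eta_t$ and $\xi_t$, which prevents naively pulling an expectation through the first-order term; this is precisely what the two-sided deterministic bracketing $\alpha \leq \eta_t \leq \eta_b$ (afforded by smoothness plus the $\eta_b$ clipping) is designed to resolve. A secondary delicate point is squeezing out the exact contraction factor $(1-\mu\alpha)$: a direct bookkeeping leaves an extra $(1-\tfrac{1}{2c})$ multiplier, so one must either absorb it into the stated constants (taking $c=1$ is representative) or argue it away via a case split depending on which branch of the $\min$ defines $\eta_t$. The remainder is routine: take total expectation, iterate, and apply the geometric-series bound for the neighborhood term.
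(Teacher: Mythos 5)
Your skeleton (the strongly convex one-step bound, the inequality $\eta_t^2\|\nabla f_{\xi_t}(x_t)\|_\ast^2/\mupsi \le \eta_t(f_{\xi_t}(x_t)-f_{\xi_t}^\ast)/c$, the bracketing $\alpha \le \eta_t \le \eta_b$, the tower property and geometric series) matches the paper's, but the middle of your argument has a genuine gap that you yourself flag and do not resolve. By applying convexity of $f_{\xi_t}$ to the inner product \emph{first}, you collapse $-\eta_t\langle\nabla f_{\xi_t}(x_t),x_t-x_\ast\rangle$ to $-\eta_t(f_{\xi_t}(x_t)-f_{\xi_t}(x_\ast))$ and are left with a descent term carrying the coefficient $\bigl(1-\tfrac{1}{2c}\bigr)$, which is \emph{zero} at $c=\tfrac12$ — a value the theorem explicitly permits — and in general cannot produce the stated contraction factor $(1-\mu\alpha)$. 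The paper avoids this entirely: it bounds $\tfrac{1}{2c}(f_{\xi_t}(x_t)-f_{\xi_t}^\ast)\le f_{\xi_t}(x_t)-f_{\xi_t}^\ast$ using $c\ge\tfrac12$, splits off $f_{\xi_t}(x_\ast)-f_{\xi_t}^\ast$, and then recombines the remaining $+\eta_t(f_{\xi_t}(x_t)-f_{\xi_t}(x_\ast))$ with the untouched inner product to form exactly $-\eta_t B_{f_{\xi_t}}(x_\ast;x_t)$, which is nonpositive by convexity, hence $\le -\alpha B_{f_{\xi_t}}(x_\ast;x_t)$, whose conditional expectation is $-\alpha B_f(x_\ast;x_t)$.

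The second, related problem is your invocation of relative strong convexity as $f(x_t)-f(x_\ast)\ge\mu B_\psi(x_\ast;x_t)$. The hypothesis is $B_f(x;y)\ge\mu B_\psi(x;y)$, i.e.\ $B_f(x_\ast;x_t)\ge\mu B_\psi(x_\ast;x_t)$; the function-value gap is a different quantity. What you can extract from the gap (via first-order optimality at $x_\ast$) is $f(x_t)-f(x_\ast)\ge B_f(x_t;x_\ast)\ge\mu B_\psi(x_t;x_\ast)$, which has the arguments in the wrong order for the recursion on $B_\psi(x_\ast;x_t)$ — Bregman divergences are not symmetric, and the paper's definition even requires the second argument to lie in $\interior\domainBregman$, which $x_\ast$ need not. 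Both issues disappear once you keep the full divergence $B_{f_{\xi_t}}(x_\ast;x_t)$ intact, replace $\eta_t$ by $\alpha$ there (legitimate since the divergence is nonnegative), take the conditional expectation to get $B_f(x_\ast;x_t)$, and only then apply relative strong convexity. Your proposed fixes — absorbing the stray factor into the constants or restricting to $c=1$ — would prove a weaker statement than the theorem as claimed.
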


Since $\psi$ is strongly convex we get a
guarantee on expected distance to the minimum, as
$\frac{\mupsi}{2}\norm[x_\ast-x_{t+1}]^2 \leq \divergence{\psi}{x_\ast}{x_{t+1}}$.
Also, if each $f_i$ is a strongly convex function or if it satisfies the Polyak-\L{}ojasiewicz (PL) condition (Assumption \ref{pl})
then \alg{} is upper bounded by equation (\ref{msps-bound}) and equivalent to \algmax{} with $\eta_b = \nicefrac{\mupsi}{2c\mu}$.
Therefore, SMD with \alg{} converges by Theorem \ref{thm-rel-strong}.

A similar result was shown for SGD with SPS\textsubscript{max}~\cite{loizou2020stochastic}[Theorem 3.1].
Indeed, Theorem \ref{thm-rel-strong} generalizes their
results; by taking $\psi(x) = \frac{1}{2}\norm[x]^2_2$ we
recover a result which is true for both SGD and SPGD.

\begin{corollary}\label{sgd-cor}
Assume $f_\xi$ is convex and $L$-smooth  with respect to
the norm $\norm_2$ almost surely and that $f$ is $\mu$-strongly convex 
with respect to $\norm_2$ over $\decSet$.
Then SPGD (SGD if $\decSet = \reals^d$) 
with SPS\textsubscript{max} guarantees
$
\expect{\frac{1}{2}\norm[x_\ast- x_{t+1}]^2_2} \leq (1-\mu\alpha)^t \frac{1}{2}\norm[x_\ast- x_{1}]^2_2 + \frac{\eta_b\sigma^2}{\alpha\mu}.
$
\end{corollary}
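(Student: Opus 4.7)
The plan is to derive Corollary \ref{sgd-cor} as a direct specialization of Theorem \ref{thm-rel-strong} by setting $\psi(x) = \frac{1}{2}\norm[x]_2^2$. The proof is essentially a verification that all the hypotheses of Theorem \ref{thm-rel-strong} reduce to those stated in the corollary under this choice of $\psi$, and that the resulting SMD update and stepsize reduce to SPGD with SPS\textsubscript{max}.

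First, I would verify the assumptions on $\psi$. With $\psi(x) = \frac{1}{2}\norm[x]_2^2$ we have $\domainBregman = \reals^d$, so $\decSet \subseteq \domainBregman$ trivially and $\decSet \subseteq \interior \domainBregman$, giving Assumption \ref{int-assumption} immediately (the mirror update reduces to Euclidean projection onto the closed convex set $\decSet$, which is well-defined and lies in $\reals^d$). Next, $\psi$ is $1$-strongly convex with respect to $\norm_2$, so we may take $\mupsi = 1$. The Bregman divergence is $B_\psi(x;y) = \tfrac{1}{2}\norm[x-y]_2^2$, and consequently the hypothesis that $f$ is $\mu$-strongly convex with respect to $\norm_2$ over $\decSet$ is equivalent to $f$ being $\mu$-strongly convex relative to $\psi$ over $\decSet$.

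Second, I would identify the algorithmic objects. The SMD update \eqref{mirror-update} becomes
\begin{equation*}
x_{t+1} = \arg\min_{x \in \decSet}\, \langle \nabla f_{\xi_t}(x_t), x\rangle + \frac{1}{2\eta_t}\norm[x-x_t]_2^2 = \Pi_\decSet\bigl(x_t - \eta_t \nabla f_{\xi_t}(x_t)\bigr),
\end{equation*}
which is the SPGD update (and reduces to SGD if $\decSet = \reals^d$). Substituting $\mupsi = 1$ and $\norm_\ast = \norm_2$ into the \algmax{} definition \eqref{msps-max} recovers the SPS\textsubscript{max} stepsize
\begin{equation*}
\eta_t = \min\!\left\{\frac{f_{\xi_t}(x_t) - f_{\xi_t}^\ast}{c\,\norm[\nabla f_{\xi_t}(x_t)]_2^2},\; \eta_b\right\}.
\end{equation*}

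Third, I would apply Theorem \ref{thm-rel-strong} directly. The convexity and $L$-smoothness of $f_\xi$ with respect to $\norm_2$ are exactly the hypotheses of the theorem under this $\norm$, and we already verified relative strong convexity and Assumption \ref{int-assumption}. Theorem \ref{thm-rel-strong} therefore yields
\begin{equation*}
\expect{B_\psi(x_\ast; x_{t+1})} \leq (1-\mu\alpha)^t B_\psi(x_\ast; x_1) + \frac{\eta_b \sigma^2}{\alpha \mu},
\end{equation*}
with $\alpha = \min\{\tfrac{1}{2cL}, \eta_b\}$. Rewriting $B_\psi(x_\ast; x_s) = \tfrac{1}{2}\norm[x_\ast - x_s]_2^2$ gives exactly the stated bound.

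There is no substantive obstacle here; the proof is a bookkeeping exercise. The only minor care needed is to check that $\mu$-strong convexity of $f$ with respect to $\norm_2$ on $\decSet$ coincides with $\mu$-relative strong convexity against $\tfrac{1}{2}\norm_2^2$ on $\decSet$ (immediate from the definitions via $B_\psi(x;y) = \tfrac{1}{2}\norm[x-y]_2^2$), and to note that the constant $c \geq \tfrac{1}{2}$ requirement from Theorem \ref{thm-rel-strong} is inherited and should implicitly be stated (or simply carried through) in the corollary.
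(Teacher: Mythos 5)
Your proposal is correct and follows exactly the paper's route: the corollary is obtained by specializing Theorem \ref{thm-rel-strong} to $\psi = \tfrac{1}{2}\norm_2^2$, under which $\mupsi = 1$, $B_\psi(x;y) = \tfrac{1}{2}\norm[x-y]_2^2$, relative strong convexity coincides with $\mu$-strong convexity in $\norm_2$, and \algmax{} reduces to SPS\textsubscript{max}. Your closing remark that the $c \geq \tfrac{1}{2}$ condition is implicitly inherited is a fair observation about the corollary's statement, not a gap in the argument.
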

For the case of preconditioned SGD, $\psi(x) = \frac{1}{2}\norm[x]_M^2$ and $\decSet = \reals^d$, we can go further and extend 
a non-convex result similar to Theorem 3.6 in \citet{loizou2020stochastic}. We include the result and proof in section \ref{sec:precondition}.

In \citet{loizou2020stochastic} constant stepsize results
are derived as a special case of SPS\textsubscript{max}, 
similarly if in \algmax{} $\eta_b$ is selected such that $\eta_b \leq \nicefrac{\mupsi}{2cL_{\max}}$ then $\eta_t$ is a constant and 
we can derive  new
constant stepsize results for SMD.
However, using Theorem \ref{thm-rel-strong} and \algmax{} to analyze  constant stepsize SMD yields weaker results than  Theorem \ref{thm-rel-smooth-strong}.
The assumptions made in Theorem
\ref{thm-rel-strong} are stronger. 
For example, Theorem \ref{thm-rel-strong} 
requires $\psi$ to be both strongly convex
and smooth on $\decSet$ with respect to a norm which would not be possible if $\psi$
is Legendre over $\decSet$ and $\decSet$ is bounded. 
This limitation, however, does not 
apply for Theorem \ref{thm-rel-smooth-strong} and the next result for smooth convex losses since we do not enforce a smoothness condition on $\psi$.

\subsection{Smooth and convex} 

Without $f$ being relatively strongly convex we can 
attain convergence results on the average function
value.

\begin{theorem}\label{thm-smooth}
If $f_\xi$ is convex and $L$-smooth with respect to a norm $\norm$ almost surely,  assumption \ref{int-assumption} holds,
and $\psi$ is $\mupsi$-strongly convex over $\decSet$ with respect
to $\norm$. 
Then mirror descent with \algmax{} and $c \geq 1$ guarantees
\[
\expect{f(\bar{x}_t) -f(x_\ast)} \leq \frac{2 B_\psi(x_\ast;x_1)}{\alpha t} + \frac{2\eta_b \sigma^2}{\alpha}.
\]
Where $\alpha \coloneqq \min\{\nicefrac{\mupsi}{2cL}, \eta_b\}$.
\end{theorem}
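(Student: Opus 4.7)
The plan is to invoke Lemma~\ref{md-progress} with $g_t = \nabla f_{\xi_t}(x_t)$ in its strongly convex form (equation~(\ref{progress-strong})), then use convexity of $f_{\xi_t}$ to lower bound $\langle \nabla f_{\xi_t}(x_t), x_t - x_\ast \rangle \geq f_{\xi_t}(x_t) - f_{\xi_t}(x_\ast)$. This produces a one-step inequality
\[
B_\psi(x_\ast; x_{t+1}) \leq B_\psi(x_\ast; x_t) - \eta_t (f_{\xi_t}(x_t) - f_{\xi_t}(x_\ast)) + \frac{\eta_t^2}{2\mupsi}\norm[\nabla f_{\xi_t}(x_t)]^2_\ast.
\]

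Next I would exploit the structure of \algmax{}. Because $\eta_t \leq \frac{\mupsi (f_{\xi_t}(x_t) - f_{\xi_t}^\ast)}{c\norm[\nabla f_{\xi_t}(x_t)]^2_\ast}$ by construction, the noise term satisfies $\frac{\eta_t^2}{2\mupsi}\norm[\nabla f_{\xi_t}(x_t)]^2_\ast \leq \frac{\eta_t(f_{\xi_t}(x_t) - f_{\xi_t}^\ast)}{2c}$. Writing $f_{\xi_t}(x_t) - f_{\xi_t}(x_\ast) = (f_{\xi_t}(x_t) - f_{\xi_t}^\ast) - (f_{\xi_t}(x_\ast) - f_{\xi_t}^\ast)$ and using $c \geq 1$ (so $1 - \frac{1}{2c} \geq \frac{1}{2}$) to combine the two $(f_{\xi_t}(x_t) - f_{\xi_t}^\ast)$ terms gives
\[
B_\psi(x_\ast; x_{t+1}) \leq B_\psi(x_\ast; x_t) - \frac{\eta_t}{2}(f_{\xi_t}(x_t) - f_{\xi_t}^\ast) + \eta_t (f_{\xi_t}(x_\ast) - f_{\xi_t}^\ast).
\]

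The two residual terms now have definite sign: the second is non-positive and the third is non-negative since $f_{\xi_t}(x_\ast) \geq f_{\xi_t}^\ast$. On the non-positive term I apply the lower bound $\eta_t \geq \alpha$ (which follows from the self-bounding inequality~(\ref{msps-bound}) when the stepsize is uncapped and holds trivially with $\eta_t = \eta_b$ otherwise); on the non-negative term I apply the upper bound $\eta_t \leq \eta_b$. Taking conditional expectation over $\xi_t$ and collapsing the two $\expect{f_\xi^\ast}$ contributions via $f(x_\ast) - \expect{f_\xi^\ast} = \sigma^2$ yields a noise term bounded by $(\eta_b - \alpha/2)\sigma^2 \leq \eta_b \sigma^2$ and a descent term $-\frac{\alpha}{2}(f(x_t) - f(x_\ast))$. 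Taking total expectation, telescoping from $s = 1$ to $t$, dropping the non-negative $\expect{B_\psi(x_\ast; x_{t+1})}$, and applying Jensen's inequality on the convex $f$ to pass from the average of $\expect{f(x_s) - f(x_\ast)}$ to $\expect{f(\bar{x}_t) - f(x_\ast)}$ delivers the advertised bound.

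The main obstacle is the careful two-sided bookkeeping on the adaptive stepsize: the upper half of the min (the Polyak numerator) is what cancels $\norm[\nabla f_{\xi_t}(x_t)]^2_\ast$ in the noise term, whereas the lower bound $\alpha$ from the self-bounding property must be applied \emph{only} to the signed descent term; uniformly substituting $\alpha$ or $\eta_b$ would either weaken the descent rate or inflate the neighborhood. Converting $f(x_\ast) - \expect{f_\xi^\ast}$ into $\sigma^2$ at the expectation step is what lets this analysis avoid any bounded-variance or bounded-gradient hypothesis, in line with the paper's broader methodological theme.
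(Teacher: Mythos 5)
Your proposal is correct and follows essentially the same route as the paper's proof: the strongly convex form of Lemma~\ref{md-progress}, convexity of $f_{\xi_t}$, the inequality $\frac{\eta_t^2}{2\mupsi}\norm[\nabla f_{\xi_t}(x_t)]^2_\ast \leq \frac{\eta_t(f_{\xi_t}(x_t)-f_{\xi_t}^\ast)}{2c}$ from the min-structure of \algmax{}, the split through $f_{\xi_t}^\ast$ with $c\geq 1$, and the asymmetric application of $\alpha \leq \eta_t \leq \eta_b$ to the signed terms before telescoping and Jensen. The only cosmetic difference is that the paper drops the extra $-\frac{\alpha}{2}(f_{\xi_t}(x_\ast)-f_{\xi_t}^\ast)$ term outright where you retain it as the $(\eta_b-\alpha/2)\sigma^2 \leq \eta_b\sigma^2$ bound; the result is identical.
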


Similarly to Theorem \ref{thm-rel-strong} we can
derive constant stepsize results, except we require 
$\eta_b  \leq \nicefrac{\psi}{2L_{\max}}$ (with $c = 1$),
see Section \ref{const-smooth-cor} for details.
Unlike Theorem \ref{thm-rel-strong}, however, this result does not require $\psi$ to be smooth 
over $\decSet$.

\paragraph{Comparison with SPS.} Unlike the analysis
of SPS our results and stepsize depend on the  choice of the mirror map $\psi$. 
This dependence, as observed historically,
is an important motivation for mirror descent, 
allowing for tighter bounds and better dependence on the dimension $d$. 
For example, suppose $\decSet = \Delta_d$ and $f_\xi$
is $L$ smooth with respect to $\norm_1$ and for simplicity $x_1= (\nicefrac{1}{d},\cdots, \nicefrac{1}{d})$, $c=1$, and $\eta_b$ is selected large enough such that $\alpha = \nicefrac{\mupsi}{2cL_{\max}}$. 
Then the bound in Theorem \eqref{thm-smooth} for EG gives
$\expect{f(\bar{x}_t) -f(x_\ast)} \leq \nicefrac{4 L \log d}{t}+4L\eta_b\sigma^2$.
Meanwhile, under SPGD the bound is 
$\nicefrac{4dL}{ t}+4dL \eta_b\sigma^2$,
since $f_\xi$ is $\tilde{L}$ smooth
with respect to $\norm_2$ if $\tilde{L} = d L$.
Note that unlike SPGD the neighborhood of convergence for EG is independent of $d$!
Moreover, under interpolation EG converges at a rate that scales logarithmically in $d$, which is otherwise not possible with SGD.
Therefore, selecting the appropriate $\psi$ and stepsize allows for better dependence on  $d$ with
a smaller neighborhood of convergence.

\subsection{Exact convergence with adaptive stepsizes and interpolation}
As a consequence of the previous results, we have
several convergence guarantees under interpolation ($\sigma^2 = 0$).
In fact, when $\sigma^2 = 0$ the upper bound $\eta_b$
is not needed, the unbounded variant 
\alg{} will enjoy the same convergence rates
as \algmax{}.
Additionally, similar to Section \ref{rel-section}, we can attain almost sure convergence results analogous to Corollary \ref{thm:as-rel-strong-smooth} and
Corollary \ref{thm:as-non-convex}.
To the best of our knowledge, 
all existing results are with constant stepsize~(Section \ref{rel-section}, \citealp{ dragomir2021fast,azizan2018stochastic}), 
or with conditions on the
initialization of parameters~\cite{azizan2019stochastic}.
In contrast, with \alg{} we have provided exact global convergence guarantees with an adaptive stepsize.

\subsection{Mirror descent examples}

To demonstrate the generality of our results we
consider two cases of Theorem \ref{thm-smooth}.
We examine the so called $p$-norm
algorithms, and preconditioned SGD.
Similar results can also be derived with the exponential gradient algorithm
and the norm $\norm{}_1$.

\begin{corollary}[p-norm]\label{corr-pnorm}
Suppose the assumptions of Theorem \ref{thm-smooth}
hold with $\norm = \norm_p$ for $1 < p \leq 2$ 
and $\psi(x) = \frac{1}{2}\norm[x]^2_p$.
Let $q$ be such that $\frac{1}{p}+\frac{1}{q}=1$.
Then SMD with stepsizes $\eta_t = \min \left\{\frac{(p-1)(f_{\xi_t}(x_t)-f_{\xi_t}^\ast)}{\norm[\nabla f_{\xi_t}(x_t)]^2_q}, \eta_b \right\}$,
guarantees
$
\expect{f(\bar{x}_t) -f(x_\ast)} \leq \frac{2 B_\psi(x_\ast;x_1)}{\alpha t} + \frac{2\eta_b \sigma^2}{\alpha}.
$
\end{corollary}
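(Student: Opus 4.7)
The plan is to recognize Corollary \ref{corr-pnorm} as a direct specialization of Theorem \ref{thm-smooth} and then verify that the relevant constants ($\mupsi$, the dual norm, and Assumption \ref{int-assumption}) take the forms claimed. Concretely, I would (i) check that $\psi(x)=\tfrac{1}{2}\norm[x]_p^2$ is strongly convex over $\reals^d$ with respect to $\norm_p$ with modulus $\mupsi = p-1$, (ii) note that the dual norm of $\norm_p$ is $\norm_q$ for $\tfrac{1}{p}+\tfrac{1}{q}=1$, and (iii) instantiate the \algmax{} formula \eqref{msps-max} with $c = 1$ and $\mupsi = p-1$, which reproduces exactly the stated stepsize.

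For step (i), the strong convexity of $\tfrac{1}{2}\norm[x]_p^2$ with respect to $\norm_p$ with constant $p-1$ for $1 < p \leq 2$ is a classical fact (see, e.g., the $p$-norm analysis of \citet{grove2001general} and \citet{gentile2003robustness}); I would simply invoke this. Step (ii) is immediate from Hölder's inequality. For Assumption \ref{int-assumption}, since $\psi$ has full domain $\domainBregman = \reals^d$ and $\decSet \subseteq \reals^d = \interior \domainBregman$, the second clause of the sufficient condition from \citet{orabona2019modern} is automatically satisfied, so every mirror descent iterate remains well defined.

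With these identifications in place, the hypotheses of Theorem \ref{thm-smooth} hold: $f_\xi$ is $L$-smooth with respect to $\norm_p$, $\psi$ is $(p-1)$-strongly convex over $\decSet$ with respect to $\norm_p$, and Assumption \ref{int-assumption} holds. Substituting $\mupsi = p-1$, $c = 1$, and the dual norm $\norm_q$ into \eqref{msps-max} gives precisely
\[
\eta_t = \min\left\{\frac{(p-1)(f_{\xi_t}(x_t)-f_{\xi_t}^\ast)}{\norm[\nabla f_{\xi_t}(x_t)]_q^2},\, \eta_b\right\},
\]
and Theorem \ref{thm-smooth} then yields the claimed convergence bound on $\expect{f(\bar{x}_t) - f(x_\ast)}$ with $\alpha = \min\{(p-1)/(2L), \eta_b\}$.

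The only mildly nontrivial ingredient is the strong convexity constant for $\tfrac{1}{2}\norm[\cdot]_p^2$ under $\norm_p$; everything else is a bookkeeping exercise of matching constants to the general theorem. I do not anticipate a genuine obstacle, so the proof is essentially a one-line reduction once the strong convexity constant is cited.
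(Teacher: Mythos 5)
Your proposal is correct and matches the paper's treatment: the corollary is obtained by instantiating Theorem \ref{thm-smooth} with $c=1$, using the classical fact that $\frac{1}{2}\norm[x]_p^2$ is $(p-1)$-strongly convex with respect to $\norm_p$ for $1<p\leq 2$ (the paper cites \citet{orabona2019modern} for this) and that the dual norm is $\norm_q$, with Assumption \ref{int-assumption} holding automatically since $\domainBregman=\reals^d$. The identification $\alpha=\min\{(p-1)/(2L),\eta_b\}$ and the resulting bound are exactly as in the paper.
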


Another interesting case is SGD with
preconditioning $x_{t+1} = x_t - \eta M^{-1}\nabla f_i(x_t)$,
for some positive definite matrix $M$.
In other words, $\psi$ is taken to be $\psi(x) = \frac{1}{2}\norm[x]_M^2$, with
$\divergence{\psi}{x}{y} = \frac{1}{2}\norm[x-y]_M^2$ and $\decSet = \reals^d$. 

\begin{corollary}[Preconditioned SGD]
Suppose $\decSet = \reals^d$ and the assumptions of Theorem \ref{thm-smooth}
hold with $\norm = \norm_M$, for a positive definite matrix $M$. 
Then SMD with $\psi(x)=\frac{1}{2}\norm[x]_M^2$ 
and stepsizes $\eta_t = \min \left\{\frac{(f_{\xi_t}(x_t)-f_{\xi_t}^\ast)}{\norm[\nabla f_{\xi_t}(x_t)]^2_{M^{-1}}}, \eta_b \right\}$,
guarantees
$
\expect{f(\bar{x}_t) -f(x_\ast)} \leq \frac{\norm[x_\ast-x_{1}]_M^2}{\alpha t} + \frac{2\eta_b \sigma^2}{\alpha}.
$
\end{corollary}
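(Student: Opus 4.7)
The plan is to derive this result as a direct specialization of Theorem \ref{thm-smooth} by checking that each hypothesis is met by the particular mirror map $\psi(x) = \tfrac{1}{2}\norm[x]_M^2$ and the choice $\decSet = \reals^d$, and then reading off the constants.

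First, I would verify the structural facts about $\psi$. Since $\psi(x) = \tfrac{1}{2}\langle x, Mx\rangle$ is a positive-definite quadratic, a short calculation gives $\nabla \psi(x) = Mx$ and $B_\psi(x;y) = \tfrac{1}{2}\norm[x-y]_M^2$, so $\psi$ is $\mupsi$-strongly convex with respect to $\norm_M$ with $\mupsi = 1$. Next, the dual norm of $\norm_M$ is $\norm_{M^{-1}}$: for any $g$, $\sup\{\langle g, y\rangle : \langle y, My\rangle \leq 1\} = \sqrt{\langle g, M^{-1} g\rangle} = \norm[g]_{M^{-1}}$ by Cauchy–Schwarz applied to the inner product induced by $M$. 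With $\decSet = \domainBregman = \reals^d$, Assumption \ref{int-assumption} holds trivially since $\interior \domainBregman = \reals^d$.

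Second, I would instantiate the mirror descent update. The first-order optimality condition for \eqref{mirror-update} is $\nabla \psi(x_{t+1}) = \nabla \psi(x_t) - \eta_t \nabla f_{\xi_t}(x_t)$, which here reads $M x_{t+1} = M x_t - \eta_t \nabla f_{\xi_t}(x_t)$, giving the preconditioned SGD update $x_{t+1} = x_t - \eta_t M^{-1}\nabla f_{\xi_t}(x_t)$. Plugging $\mupsi = 1$ and $c = 1$ into the definition of \algmax{} in \eqref{msps-max} and using the dual-norm identification recovers exactly the stated stepsize $\eta_t = \min\bigl\{(f_{\xi_t}(x_t) - f_{\xi_t}^\ast)/\norm[\nabla f_{\xi_t}(x_t)]^2_{M^{-1}}, \eta_b\bigr\}$.

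Finally, I would invoke Theorem \ref{thm-smooth}. The hypotheses — $f_\xi$ convex and $L$-smooth with respect to $\norm_M$ almost surely, $\psi$ strongly convex with respect to $\norm_M$, and Assumption \ref{int-assumption} — have been checked. The theorem yields
\[
\expect{f(\bar{x}_t) - f(x_\ast)} \leq \frac{2 B_\psi(x_\ast; x_1)}{\alpha t} + \frac{2\eta_b \sigma^2}{\alpha},
\]
with $\alpha = \min\{\mupsi/(2cL), \eta_b\}$. Substituting $B_\psi(x_\ast; x_1) = \tfrac{1}{2}\norm[x_\ast - x_1]_M^2$ turns the leading constant into $\norm[x_\ast - x_1]_M^2/(\alpha t)$, matching the stated bound. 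There is no real obstacle: the only thing to watch out for is matching the constants, in particular confirming that the dual of $\norm_M$ really is $\norm_{M^{-1}}$ and that $c = 1$ (required by Theorem \ref{thm-smooth}) is consistent with the stepsize formula in the corollary statement.
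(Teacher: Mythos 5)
Your proposal is correct and matches the paper's intended derivation: the corollary is a direct specialization of Theorem \ref{thm-smooth} with $\psi(x)=\frac{1}{2}\norm[x]_M^2$, $\mupsi=1$, $c=1$, the identification of the dual norm of $\norm_M$ as $\norm_{M^{-1}}$, and the substitution $B_\psi(x_\ast;x_1)=\frac{1}{2}\norm[x_\ast-x_1]_M^2$ to absorb the factor of $2$ in the leading term. All constants check out.
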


\section{Experiments}
\label{SectionExperiments}

We test the performance of \alg{} on different
supervised learning domains and with different
instances of SMD. 
We use \alg{} in our convex experiments with $c=1$. In theory the bounded stepsize \algmax{} is required in absence of interpolation,
however, in practice we observe \alg{} converges, likely due to the problems being close to interpolation.
For our non-convex deep learning experiments we follow ~\citet{loizou2020stochastic} by selecting $c=0.2$ and a smoothing procedure to set a moving upper bound for \algmax{}.\footnote{This technique is a moving upperbound. More precisely we run \algmax{} with an upper bound at time $t$ given by
$
\eta_b^t = \tau^{b/n} \eta_{t-1}
$
where $b$ and $n$ are the batchsize and number of examples respectively, which amounts to $\tau^{b/n} \approx 1$ in our experiments, with
$
\eta_b^t \approx \eta_{t-1}
$.} To compare against a constant stepsize we sweep over $\{10^{-5}, 10^{-4}, 10^{-3}, 10^{-2}, 10^{-1}, 1, 10^{1},10^{2},10^{3},10^{4},10^{5}\}$.

We consider 4 series of experiments.
First, we consider unconstrained convex problems with \alg{} and different $p$-norm algorithms, $\psi(x)=\norm[x]^2_p$. Second, we evaluate the performance of \alg{} with SPGD and positive constraints.
Third, we solve a convex problem with a $\ell_1$ constraint using \alg{} and the exponentiated gradient algorithm (EG). 
Finally, Section~\ref{sec:exp-deep} demonstrates that our method shows competitive performance over highly tuned stepsizes for deep learning without any tuning of the hyper-parameters. 

\begin{figure*}[h]
    \centering
      \includegraphics[width = 0.93\textwidth]{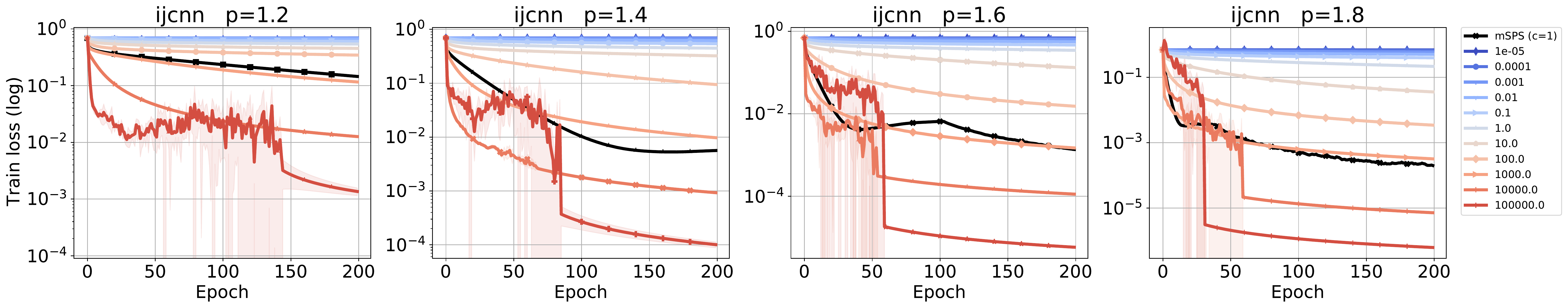}
     \includegraphics[width = 0.93\textwidth]{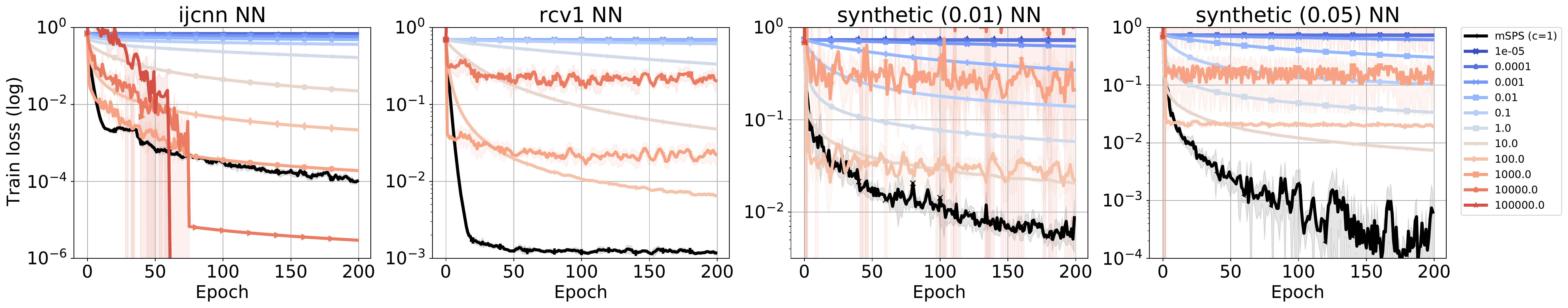}
     \includegraphics[width = 0.93\textwidth]{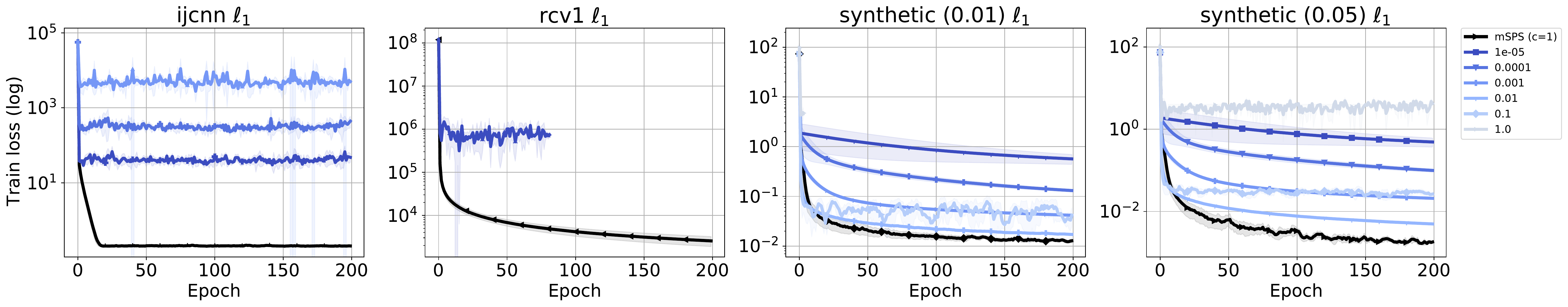}
    \caption{Comparison between \alg{} with $c=1$ and constant stepsizes on convex binary-classification problem with no constraints (row1), with non-negative (NN) constraints (row2), and with $\ell_1$ constraints (row3).}
\label{fig:exps}    
\end{figure*}

\begin{figure*}[h]
    \centering
      \includegraphics[width = 1\textwidth]{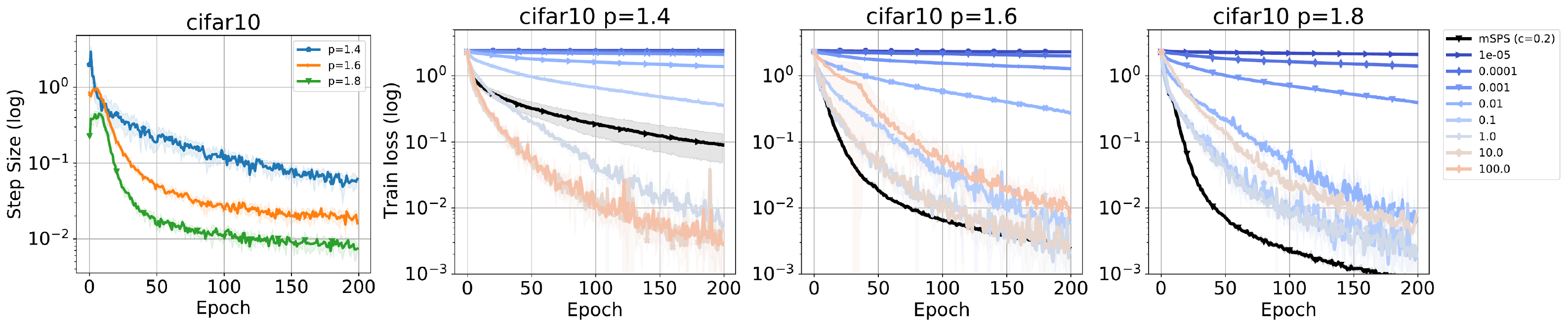}
    \caption{Comparison between \alg{} with $c=0.2$ and constant stepsizes on non-convex multiclass classification with deep networks. The leftmost plot shows the stepsize evolution for different $p$ values.}
\label{fig:deep}    
\end{figure*}

\subsection{Mirror descent across p-norms.}
\label{sec:exp-pnorm}
We consider a convex binary-classification problems using radial basis function (RBF) kernels. We experiment on the ijcnn dataset obtained from LIBSVM~\citep{libsvm} which does not satisfy interpolation.\footnote{In the appendix we include results on the mushroom dataset where interpolation is satisfied.} ijcnn has 22 dimensions, 39,992 training examples, and  9998 test examples.  We selected the kernel bandwidth 0.05 following~\citet{vaswani2019painless}. For these experiments we compare across {$p$} $\in \{1.2, 1.4, 1.6, 1.8\}$ between \alg{} and the standard constant stepsize. The first row of Figure~\ref{fig:exps} shows the training loss for the different optimizers with a softmax loss. We make the following observations: (i) \alg{} performs reasonably well across different values of $p$ and outperforms most stepsizes of SMD. (ii) \alg{} performs well on ijcnn even though it is not separable in the kernel space (\emph{i.e.} there is no interpolation). This demonstrates some robustness to violations of the interpolation condition and to different values of the $p$. Note that each optimizer was ran with five different random seeds to demonstrate their robustness.

\subsection{Projected gradient descent}
\label{sec:exp-clip}
In this setup we consider optimizing the logistic loss with a non-negative constraint on the parameters. We run our optimizers on two real-datasets ijcnn and rcv1. rcv1 has 47,236 dimensions, 16194 training examples and 4048 test examples. Following~\citet{vaswani2019painless} we selected the RBF kernel on rcv1 with bandwidth 0.25. 

We also ran the optimizers on two synthetic datasets for binary classification that are linearly separable datasets with margins 0.01 and 0.05 respectively. Linear separability ensures the interpolation condition will hold. For each margin, we generate a dataset with 10k examples with d = 20 features and binary targets.

We observe in the second row of Figure~\ref{fig:exps} that \alg{} outperforms the best tuned constant stepsize in most cases, and in the rest of the cases is competitive. 
The result underlines the importance of adaptive stepsizes.

\subsection{Exponentiated gradient}\label{sec:exp-simplex}
To test the effectiveness of \alg{} with EG we consider
the datasets in Section \ref{sec:exp-clip} with
logitistic regression where parameters are constrained to the
$\ell_1$ ball, $\decSet = \{ x : \norm[x]_1 \leq \lambda\}$.
To solve this problem with EG, we employ the common trick
of reducing an $\ell_1$ ball constraint to a simplex constraint with dimension $(2d-1$)~\citep{schuurmans2016deep}.

For these experiments we test our optimizers on rcv1 and ijcnn and the two synthetic datasets mentioned in Section~\ref{sec:exp-clip} and report their results in row 3 of Figure~\ref{fig:exps}. Like in the previous experiments, \alg{} is significantly faster than most constant stepsizes even though $c$ is kept at 1 and in some cases outperforms the best tuned SMD. Note that the constant stepsizes that don't appear in the plots have diverged.

\subsection{$p$-norm for  optimizing deep networks}
\label{sec:exp-deep} For mutliclass-classification with deep networks, we considered
the $p$-norm algorithms for the CIFAR10 dataset.  CIFAR10 has 10 classes and
we used the standard training set consisting of 50k examples and a test set of 10k. As in the kernel experiments, we evaluated the optimizers using the softmax loss for different values of $p$. We used the experimental setup
proposed in \citet{loizou2020stochastic} and used a batch-size of 128 for all methods and datasets. We used the standard image-classification architecture ResNet-34  \citep{he2016deep}. As in the other experiments, each optimizer was run with five different random seeds in the final experiment. The optimizers were run until the performance of most methods saturated; 200 epochs for the models on the CIFAR10 dataset.

From Figure~\ref{fig:deep}, we observe that: (i) \alg{} with $c=0.2$ and smoothing constantly converges to a good solution much faster when compared to most constant stepsizes. (ii) The gap between the performance of \alg{} and constant stepsize increases as $p$ decreases suggesting that, like in the convex setting, our method is robust to different values of $p$.

\section{Conclusions and future work}
Stochastic mirror descent (SMD) is a powerful generalization of 
stochastic projected gradient descent 
to solve problems without a Euclidean structure.
We provide new convergence analysis for 
SMD with constant stepsize in
relatively smooth optimization and with the new adaptive stepsizes \alg{}, \algmax{},  in
the smooth case. 
Consequently, we achieve the first interpolation results for the EG algorithm under interpolation.

A main novelty of our results is the use of the finite optimal
objective difference assumption \citep{loizou2020stochastic} with mirror descent, allowing for convergence without bounded gradient 
or variance assumptions and achieving exact convergence under interpolation.
In relative smooth optimization we refine the finite optimal objective difference assumption to better capture interpolation with constraints and achieve
convergence in cases not guaranteed by existing works.

In smooth optimization we experimentally validate \alg{} in several supervised learning domains and 
across various instances of mirror descent.
\alg{} requires no tuning but is nonetheless competitive or better than extensively hand-tuned step sizes.
This adaptivity is important for tackling  different problem domains with different versions
of mirror descent.

Beyond the scope of this paper there are several interesting directions
for future work. For example, we critically rely on the relative smoothness
or smoothness,
however, it would be interesting to attain rates of convergence with the finite optimal
objective difference assumption 
without smoothness.
Additionally, our convergence result
of \algmax{} in Theorem \ref{thm-rel-strong}
requires $\psi$ to be smooth over $\decSet$, an assumption
not required for our constant stepsize results
in Section \ref{rel-section},
it would be interesting to unify the results
by developing a variant of \algmax{}
for the more general relatively 
smooth problem.

\bibliography{ref}
\bibliographystyle{tmlr/tmlr}

\appendix
\section{Appendix}

The appendices include omitted proofs, other results, and additional experiments.
The material is organized as follows:  
standard mirror descent results are presented in 
Section \ref{sec:mirror-results}; 
non-smooth analysis of mirror descent with
the mirror Polyak stepsize is given in Section
\ref{sec:nonsmooth}; 
the lower bound proof of \alg{} is included 
in Section \ref{lower-proof};
the proofs for the results in Section \ref{rel-section} are presented in Section \ref{sec:proofs-rel}, including Theorem \ref{thm-rel-smooth-strong} and Theorem \ref{thm-rel-non-convex}; 
proofs for Section \ref{adaptive-section}
are given in Section \ref{sec:adaptive-proofs}, including a non-convex result for preconditioned SGD in Section \ref{sec:precondition};
experiment details are given in Section \ref{sec:exp-details}.

\section{Mirror descent lemmas}\label{sec:mirror-results}

\begin{lemma}[Three Point Property \citep{bubeck2014convex, orabona2019modern}]
Let $\B$ be the Bregman divergence with respect to $\psi : \domainBregman \to \reals$. 
Then for any three points $x,y \in \interior \domainBregman$ ,
and $z \in \domainBregman$, the following holds
\[
    B_\psi(z;x) + B_\psi(x;y)-B_\psi(z;y) = \langle \nabla \psi(y)-\nabla \psi(x),z-x\rangle.
\]
\end{lemma}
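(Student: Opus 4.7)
The plan is to prove this by direct algebraic expansion of each Bregman divergence using its definition, $B_\psi(a;b) = \psi(a) - \psi(b) - \langle \nabla\psi(b), a-b\rangle$, and then observing that the $\psi$-value terms telescope while the linear terms regroup. This is essentially a bookkeeping exercise rather than a deep argument, so no strategy beyond careful expansion is needed.

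Concretely, I would first write out
\begin{align*}
B_\psi(z;x) &= \psi(z) - \psi(x) - \langle \nabla\psi(x), z-x\rangle, \\
B_\psi(x;y) &= \psi(x) - \psi(y) - \langle \nabla\psi(y), x-y\rangle, \\
B_\psi(z;y) &= \psi(z) - \psi(y) - \langle \nabla\psi(y), z-y\rangle.
\end{align*}
Then I would compute $B_\psi(z;x) + B_\psi(x;y) - B_\psi(z;y)$ and note that the $\psi$-values cancel completely: $\psi(z) - \psi(x) + \psi(x) - \psi(y) - \psi(z) + \psi(y) = 0$. All that survives is the combination of inner products $-\langle \nabla\psi(x), z-x\rangle - \langle \nabla\psi(y), x-y\rangle + \langle \nabla\psi(y), z-y\rangle$.

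The final step is to group the two terms involving $\nabla\psi(y)$ using bilinearity: $-\langle \nabla\psi(y), x-y\rangle + \langle \nabla\psi(y), z-y\rangle = \langle \nabla\psi(y), (z-y) - (x-y)\rangle = \langle \nabla\psi(y), z-x\rangle$. Combining with the remaining term yields $\langle \nabla\psi(y) - \nabla\psi(x), z-x\rangle$, which is exactly the claimed identity. The mild technical point worth remarking on is just that we need $x,y \in \operatorname{int}\mathcal{D}$ so that $\nabla\psi(x)$ and $\nabla\psi(y)$ exist and the divergences $B_\psi(\cdot;x)$, $B_\psi(\cdot;y)$ are well-defined, while $z$ only needs to be in $\mathcal{D}$ since it appears only in the first slot. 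No obstacle is anticipated; the entire proof is a single short calculation.
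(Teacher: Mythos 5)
Your proof is correct and is exactly the standard direct-expansion argument that the cited references use; the paper itself states this lemma without proof, deferring to \citet{bubeck2014convex} and \citet{orabona2019modern}. The computation checks out: the $\psi$-values cancel, the $\nabla\psi(y)$ terms combine to $\langle \nabla\psi(y), z-x\rangle$, and your remark about why $x,y$ must lie in $\interior\domainBregman$ while $z$ need only lie in $\domainBregman$ is the right observation.
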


\subsection{Proof of Lemma \ref{md-progress}}
\setcounter{lemma}{0}
\begin{lemma}
Let $\B$ be the Bregman divergence with respect to a convex function
$\psi : \domainBregman \to \reals$ and assume assumption \ref{int-assumption} holds.
Let $x_{t+1} = \arg \min_{x \in \decSet} \langle g_t,x \rangle +\frac{1}{\eta_t}\divergence{\psi}{x}{x_t}$.
Then for any $x_\ast \in \decSet$
\begin{align}
    B_\psi(x_\ast;x_{t+1}) \leq B_\psi(x_\ast;x_{t})
    -\eta_t \langle g_t,x_t -x_\ast \rangle
    -\divergence{\psi}{x_{t+1}}{x_t} + \eta_t \langle g_t,x_t-x_{t+1}\rangle.
\end{align}
Furthermore if $\psi$ is $\mupsi$ strongly convex over $\decSet$ then
\begin{align}
       B_\psi(x_\ast;x_{t+1}) \leq B_\psi(x_\ast;x_{t}) -\eta_t \langle g_t,x_t -x_\ast \rangle +\frac{\eta_t^2}{2\mupsi} \norm[g_t]^2_\ast. 
\end{align}
\end{lemma}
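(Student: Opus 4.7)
The plan is to prove the two inequalities in sequence: first establish the general one-step bound via the first-order optimality condition for the mirror descent subproblem combined with the three-point identity for Bregman divergences, then strengthen it under strong convexity using Young's (Fenchel) inequality.

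For the first inequality, I would begin by writing down the first-order optimality condition for $x_{t+1} = \arg\min_{x \in \decSet} \langle g_t, x\rangle + \frac{1}{\eta_t}B_\psi(x; x_t)$. Because Assumption \ref{int-assumption} guarantees $x_{t+1} \in \interior \domainBregman$, the subproblem's objective is differentiable at $x_{t+1}$, and its convexity yields, for every $x_\ast \in \decSet$,
\[
\langle \eta_t g_t + \nabla \psi(x_{t+1}) - \nabla \psi(x_t),\, x_\ast - x_{t+1}\rangle \geq 0.
\]
Rearranging, $\eta_t \langle g_t, x_{t+1} - x_\ast\rangle \leq \langle \nabla \psi(x_t) - \nabla \psi(x_{t+1}), x_\ast - x_{t+1}\rangle$. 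Next, I would invoke the three-point identity to rewrite the right-hand side as $B_\psi(x_\ast;x_t) - B_\psi(x_\ast;x_{t+1}) - B_\psi(x_{t+1};x_t)$. Substituting and splitting $\langle g_t, x_{t+1}-x_\ast\rangle = \langle g_t, x_t - x_\ast\rangle - \langle g_t, x_t - x_{t+1}\rangle$ produces exactly inequality~\eqref{progress-convex}.

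For the second inequality, I would start from \eqref{progress-convex} and control the last two terms using $\mu_\psi$-strong convexity of $\psi$, which gives $B_\psi(x_{t+1};x_t) \geq \frac{\mu_\psi}{2}\norm[x_{t+1}-x_t]^2$. Combining this with the Cauchy-Schwarz / duality bound $\langle g_t, x_t - x_{t+1}\rangle \leq \norm[g_t]_\ast \norm[x_t - x_{t+1}]$ and Young's inequality $\eta_t \norm[g_t]_\ast \norm[x_t-x_{t+1}] \leq \frac{\eta_t^2}{2\mu_\psi}\norm[g_t]_\ast^2 + \frac{\mu_\psi}{2}\norm[x_t-x_{t+1}]^2$, the $\frac{\mu_\psi}{2}\norm[x_t-x_{t+1}]^2$ contributions cancel exactly, leaving the stated bound \eqref{progress-strong}.

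No step should pose a serious obstacle: the main subtlety is simply that one must justify using the first-order optimality condition at $x_{t+1}$, which is precisely what Assumption \ref{int-assumption} secures by placing $x_{t+1}$ in $\interior \domainBregman$ so that $\nabla \psi$ is defined there. Everything else is a mechanical application of the three-point identity and Young's inequality.
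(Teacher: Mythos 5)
Your proof is correct and follows essentially the same route as the paper's: first-order optimality of the subproblem at $x_{t+1}$ (justified by Assumption \ref{int-assumption}), the three-point identity to convert the resulting inner product into Bregman divergences, and then strong convexity combined with the Fenchel--Young inequality to absorb $-B_\psi(x_{t+1};x_t) + \eta_t\langle g_t, x_t - x_{t+1}\rangle$ into $\frac{\eta_t^2}{2\mupsi}\norm[g_t]_\ast^2$. The only nit is a sign slip in your intermediate display --- the rearranged optimality condition should pair $\nabla\psi(x_{t+1})-\nabla\psi(x_t)$ with $x_\ast - x_{t+1}$ --- but the Bregman expression you subsequently substitute, $B_\psi(x_\ast;x_t)-B_\psi(x_\ast;x_{t+1})-B_\psi(x_{t+1};x_t)$, is the correct one, so the conclusion is unaffected.
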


\begin{proof}
The proof follows closely to the one presented in \citet{orabona2019modern}[Lemma 6.7].
First observe that $x_{t+1}$ statisfies the
first order optimality condition
\[
\langle \eta_t g_t + \nabla \psi(x_{t+1})-\nabla \psi(x_t),x_\ast-x_{t+1}\rangle \geq 0,
\]
since $\nabla_x \divergence{\psi}{x}{x_t} = \nabla \psi(x)-\nabla \psi(x_t)$.

We start by examining the inner product $\langle \eta_t g_t,x_t -x_\ast \rangle$ and adding subtracting quantities to
make the first order optimality condition appear.
\begin{align*}
    \langle \eta_t g_t,x_t -x_\ast \rangle &= \langle \eta_t g_t + \nabla \psi(x_{t+1})-\nabla \psi(x_t),x_{t+1}- x_\ast\rangle 
    + \langle \nabla \psi(x_{t+1})-\nabla \psi(x_t), x_\ast- x_{t+1}\rangle 
    + \langle \eta_t g_t,x_t-x_{t+1}\rangle \\
    &\leq \langle \nabla \psi(x_{t+1})-\nabla \psi(x_t), x_\ast- x_{t+1}\rangle 
    + \langle \eta_t g_t,x_t-x_{t+1}\rangle \mbox{(first order optimality)}\\
    &= B_\psi(x_\ast;x_t)-B_\psi(x_\ast;x_{t+1})-B_\psi(x_{t+1};x_t) + \langle \eta_t g_t,x_t-x_{t+1}\rangle \mbox{ (three point property)}.
\end{align*}
Rearranging gives the first result. Note at this point
we only require $\psi$ to be convex
and $\psi$ to be differentiable at $x_t$ and $x_{t+1}$, which is guaranteed by assumption \ref{int-assumption}.
To obtain the second result, observe
\begin{align*}
  \langle \eta_t g_t,x_t -x_\ast \rangle  &\leq B_\psi(x_\ast;x_t)-B_\psi(x_\ast;x_{t+1})-B_\psi(x_{t+1};x_t) + \langle \eta_t g_t,x_t-x_{t+1}\rangle \mbox{ (from above)} \\
  &\leq B_\psi(x_\ast;x_t) -B_\psi(x_\ast;x_{t+1}) -\frac{\mupsi}{2}\norm[x_{t+1}-x_t]^2 + \langle \eta_t g_t,x_t-x_{t+1}\rangle \mbox{ (strong convexity)}\\
    &\leq B_\psi(x_\ast;x_t) -B_\psi(x_\ast;x_{t+1}) + \frac{\eta_t^2}{2\mupsi}\norm[g_t]_\ast^2 \mbox{  (Fenchel-Young inequality)}.
\end{align*}
Rearranging gives the second result.
\end{proof}

\section{Non-smooth analysis of mirror SPS for Lipschitz functions}\label{sec:nonsmooth}

As we have already mentioned in the main paper, the Polyak step-size is used extensively in the literature of projected
subgradient descent for solving non-smooth optimization problems. However to the best of our knowledge there is no efficient generalization of this step-size for the more general mirror descent update. 

\begin{theorem}[Non-smooth deterministic]\label{thm-nonsmooth}
Assume $f$ is convex with bounded subgradients, $\norm[\partial f(x_t)]_\ast \leq G$.
Let $\psi$ be $\mu_\psi$ strongly convex with respect to the norm $\norm$,
and assume that Assumption \ref{int-assumption} holds.
Then mirror descent with stepsize $\eta_t =\frac{ \mu_{\psi}\left(f(x_t)-f(x_\ast)\right)}{ \norm[\partial f(x_t)]^2_\ast}$ satisfies,
\[
    f\left(\bar{x}_t\right)-f(x_\ast) \leq G\sqrt{\frac{ \frac{2}{\mu_{\psi}}B_\psi(x_\ast;x_1)}{t}},
\]
where $\bar{x}_t = \frac{1}{t}\sum_{s=1}^t x_s$.
The same result holds for the best iterate $f(x_t^\ast) = \min_s\{f(x_s)\}_{1\leq s \leq t}$.
Moreover, we have $\lim_{t\to \infty}f(x_t) = f(x_\ast)$.
\end{theorem}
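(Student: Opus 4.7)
The plan is to start from the strongly-convex one-step mirror descent inequality \eqref{progress-strong} applied to $g_t \in \partial f(x_t)$ and then choose $\eta_t$ so that the right-hand side is minimized — this is exactly how the mirror Polyak stepsize \eqref{non-smooth-step} was derived. Concretely, using convexity of $f$ to bound $\langle g_t, x_t - x_\ast\rangle \geq f(x_t) - f(x_\ast)$, Lemma \ref{md-progress} gives
\[
B_\psi(x_\ast; x_{t+1}) \leq B_\psi(x_\ast; x_t) - \eta_t (f(x_t) - f(x_\ast)) + \frac{\eta_t^2}{2\mu_\psi}\|g_t\|_\ast^2.
\]
Plugging in $\eta_t = \mu_\psi (f(x_t)-f(x_\ast))/\|g_t\|_\ast^2$ and simplifying yields the per-step descent inequality
\[
\frac{\mu_\psi (f(x_t)-f(x_\ast))^2}{2\|g_t\|_\ast^2} \leq B_\psi(x_\ast; x_t) - B_\psi(x_\ast; x_{t+1}).
\]

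From here I would use the bounded subgradient hypothesis $\|g_t\|_\ast \leq G$ to replace the denominator by $G^2$ and telescope over $s=1,\dots,t$, obtaining
\[
\sum_{s=1}^t (f(x_s) - f(x_\ast))^2 \leq \frac{2 G^2}{\mu_\psi} B_\psi(x_\ast; x_1).
\]
The best-iterate bound then follows by lower-bounding the sum by $t\cdot \min_s (f(x_s)-f(x_\ast))^2$ and taking a square root. For the averaged iterate I would first apply Jensen's inequality to $f$ to pass $f(\bar x_t) - f(x_\ast) \leq \tfrac{1}{t}\sum_s (f(x_s)-f(x_\ast))$, then apply Cauchy--Schwarz (or power-mean) to bound this by $\sqrt{\tfrac{1}{t}\sum_s (f(x_s)-f(x_\ast))^2}$, and substitute the telescoped bound above to recover the claimed $O(1/\sqrt{t})$ rate with constant $G\sqrt{(2/\mu_\psi)B_\psi(x_\ast;x_1)}$.

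Finally, for the last-iterate statement $\lim_t f(x_t) = f(x_\ast)$: the descent inequality shows that the sequence $B_\psi(x_\ast; x_t)$ is non-increasing (hence convergent, being non-negative), and that the telescoped sum $\sum_{s=1}^\infty (f(x_s)-f(x_\ast))^2$ is finite. In particular the summand tends to zero, which, combined with $f(x_s) \geq f(x_\ast)$, gives $f(x_s)\to f(x_\ast)$.

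The proof is essentially a verification — the only step that needs any care is the algebraic simplification after substituting the Polyak stepsize, since one must check that $\eta_t(f(x_t)-f(x_\ast)) - \tfrac{\eta_t^2}{2\mu_\psi}\|g_t\|_\ast^2$ collapses exactly to $\tfrac{\mu_\psi(f(x_t)-f(x_\ast))^2}{2\|g_t\|_\ast^2}$; once that is in hand, the remaining steps (telescoping, Cauchy--Schwarz for the average, and summability for last-iterate convergence) are standard. The conceptual point worth flagging is that, unlike in the Euclidean subgradient analysis, the mirror Polyak stepsize must be scaled by $\mu_\psi$ precisely because the Fenchel--Young step in Lemma \ref{md-progress} produces the factor $1/\mu_\psi$ in front of $\|g_t\|_\ast^2$.
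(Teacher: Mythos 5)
Your proposal is correct and follows essentially the same route as the paper's proof: the one-step inequality \eqref{progress-strong} with convexity, substitution of the Polyak stepsize to get the per-step decrease $\frac{\mu_\psi(f(x_t)-f(x_\ast))^2}{2\norm[g_t]_\ast^2}$, telescoping with the bound $\norm[g_s]_\ast\leq G$, and then Jensen plus concavity of the square root for the averaged iterate, with summability of the squared gaps giving the last-iterate limit. Nothing is missing.
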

\begin{proof}
Let $g_t$ be a subgradient of $f$ at $x_t$ used to compute $\eta_t$.
Then by Lemma \ref{md-progress} we have
\begin{align*}
    B_\psi(x_\ast; x_{t+1}) &\leq B_\psi(x_\ast; x_t) -\eta_t \langle g_t,x_t-x_\ast\rangle  +\frac{\eta_t^2}{2\mu_{\psi}}\norm[g_t]^2_\ast\\
    &\leq B_\psi(x_\ast; x_t) -\eta_t (f(x_t)-f(x_\ast)) +\frac{\eta_t^2}{2\mu_{\psi}}\norm[g_t]^2_\ast  \mbox{ (by convexity)}\\
    &= B_\psi(x_\ast; x_t) -\frac{\mu_{\psi}\left(f(x_t)-f(x_\ast)\right)^2}{ \norm[g_t]^2_\ast} + \frac{\mu_{\psi}\left(f(x_t)-f(x_\ast)\right)^2}{ 2\norm[g_t]^2_\ast} \mbox{ (by definition of $\eta_t$)}\\
    &= B_\psi(x_\ast; x_t) -\frac{\mu_{\psi}\left(f(x_t)-f(x_\ast)\right)^2}{ 2\norm[g_t]^2_\ast}.
\end{align*}
Rearranging and summing across time we have
\begin{align}
    \sum_{s=1}^t \frac{\mu_{\psi} \left(f(x_s) -f(x_\ast)\right)^2}{2\norm[g_s]^2_\ast} &\leq B_\psi(x_\ast; x_{1}) - B_\psi(x_\ast; x_{t+1}) \leq B_\psi(x_\ast; x_{1}). \label{polyak-series}
\end{align}
Applying the upper bound $\norm[g_s]_\ast \leq G$ and taking
the square root gives,
\begin{align*}
    \sqrt{ \sum_{s=1}^t \left(f(x_s) -f(x_\ast)\right)^2} &\leq G \sqrt{\frac{2B_\psi(x_\ast;x_1)}{\mu_{\psi}}}.
\end{align*}
The result then follows by the convexity of $f$ and concavity
of the square root function,
\begin{align*}
    f(\bar{x}_t) - f(x_\ast) &\leq \frac{1}{t}\sum_{s=1}^t (f(x_s) -f(x_\ast))  = \frac{1}{t}\sum_{s=1}^t \sqrt{(f(x_s) -f(x_\ast))^2} \leq \sqrt{\frac{1}{t}\sum_{s=1}^t (f(x_s) -f(x_\ast))^2}\\
    &\leq G \sqrt{\frac{2B_\psi(x_\ast;x_1)}{t\mu_{\psi}}}.
\end{align*}
To obtain the best iterate result notice that $f(x_t^\ast)- f(x_\ast))\leq \frac{1}{t}\sum_{s=1}^t (f(x_s) -f(x_\ast))$.

To attain the limiting result observe that \ref{polyak-series} implies
\[
\sum_{s=1}^\infty \mu_{\psi} \left(f(x_s) -f(x_\ast)\right)^2 \leq G^2 B_\psi(x_\ast;x_1) < +\infty.
\]
Giving the result $\lim_{t\to\infty} f(x_t)= f(x_\ast)$.
\end{proof}

\subsection{Last-iterate convergence to a solution}
Under the same assumptions as Theorem \ref{thm-nonsmooth} we have that mirror descent converges to a point. First we provide a useful lemma applicable to mirror descent with a stronly convex mirror map $\psi$.

\begin{lemma}\label{lemma:bregman-monotone}
Suppose $\psi$ is strongly convex, then if the sequece $\{x_t\}_{t\geq 1}$ is Bregman monotone with respect to a set $\mathcal{X}$, that is for any $x \in \mathcal{S}$ we have
\[
B_{\psi}(x;x_{t+1})\leq B_\psi(x;x_{t}),
\]
then $x_t \to x_\ast \in S$ if and only if all the sequential cluster points of $\{x_t\}_{t\geq 1}$ are contained in $\mathcal{S}$. 
\end{lemma}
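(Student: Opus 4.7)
The ``only if'' direction is immediate since a convergent sequence admits its limit as its unique sequential cluster point, which lies in $\mathcal{S}$ by hypothesis. For the ``if'' direction, the plan is to use strong convexity of $\psi$ twice (once to get boundedness and a cluster point, once to upgrade Bregman convergence to norm convergence) and to exploit the fact that Bregman monotonicity forces $\{B_\psi(x;x_t)\}$ to be non-increasing for every fixed $x \in \mathcal{S}$.

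First I would show $\{x_t\}_{t\geq 1}$ is bounded. Assuming $\mathcal{S}\neq\emptyset$, pick any $x\in\mathcal{S}$. Bregman monotonicity gives $B_\psi(x;x_t)\leq B_\psi(x;x_1)$ for all $t$, and $\mu_\psi$-strong convexity of $\psi$ yields
\[
\tfrac{\mu_\psi}{2}\|x-x_t\|^2 \leq B_\psi(x;x_t) \leq B_\psi(x;x_1),
\]
so $\{x_t\}$ lies in a bounded set. Therefore it has at least one sequential cluster point $x_\ast$, and by assumption $x_\ast\in\mathcal{S}$.

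Second, I would fix this particular $x_\ast$ and study the scalar sequence $a_t \coloneqq B_\psi(x_\ast;x_t)$. Since $x_\ast\in\mathcal{S}$, Bregman monotonicity implies $\{a_t\}$ is non-increasing and bounded below by $0$, hence convergent. Let $x_{t_k}\to x_\ast$ be a subsequence realizing the cluster point. Continuity of $B_\psi(x_\ast;\cdot)$ at $x_\ast\in\interior\domainBregman$ (coming from continuity of $\psi$ and $\nabla\psi$ on $\interior\domainBregman$) gives $a_{t_k}\to B_\psi(x_\ast;x_\ast)=0$. A non-increasing sequence with a subsequence tending to $0$ must itself tend to $0$, so $a_t\to 0$.

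Finally, invoking strong convexity a second time,
\[
\tfrac{\mu_\psi}{2}\|x_\ast - x_t\|^2 \leq B_\psi(x_\ast;x_t) = a_t \to 0,
\]
so $x_t\to x_\ast\in\mathcal{S}$, completing the proof. The only delicate step is the continuity of $B_\psi(x_\ast;\cdot)$ at $x_\ast$, which requires $x_\ast\in\interior\domainBregman$; under the running assumption that the iterates lie in $\interior\domainBregman$ (Assumption \ref{int-assumption}) and that limits of such iterates inherit this property (e.g.\ via the Legendre assumption already discussed after Assumption \ref{int-assumption}), this is automatic. Otherwise it would be the main technical point to justify.
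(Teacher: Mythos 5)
Your proof is correct and follows essentially the same route as the paper's: boundedness from Bregman monotonicity plus strong convexity, extraction of a cluster point $x_\ast\in\mathcal{S}$, monotone convergence of $B_\psi(x_\ast;x_t)$ with a subsequence tending to zero, and a second application of strong convexity to conclude. You are in fact slightly more careful than the paper, which silently assumes the continuity of $B_\psi(x_\ast;\cdot)$ at the cluster point; your remark that this hinges on $x_\ast\in\interior\domainBregman$ is the one genuinely delicate step in both arguments.
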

\begin{proof}
If the sequence is $\{x_t\}_{t\geq 1}$ is Bregman monotone then by strong convexity
\[
\frac{\mu_\psi}{2}\norm[x-x_{t+1}]^2\leq B_\psi(x;x_1),
\]
hence the sequence is bounded. Therefore, the sequence has a limit point $x_l$ such that there exists a subsequence $x_{b_t} \to x_l$.
Assume $x_l \in \mathcal{S}$ and consider the sequence $\{y_t = B_\psi(x_l;x_t)\}_{t\geq 1}$. Since $y_t$ is monotonically decreasing and bounded below $y_t \to L$ for some $L \in \reals$.
However, the subesequence $\{B_\psi(x_l;x_{b_t})\}$ converges to zero, therefore we have that $\lim_{t \to \infty}B_{\psi}(x_l;x_t) = 0$, implying that $\lim_{t\to \infty }x_t = x_l$ by strong convexity of $\psi$.
\end{proof}

\begin{corollary}
Under the same assumptions as Theorem \ref{thm-nonsmooth}, mirror descent converges to a solution, 
\[
\lim_{t\to \infty} x_t = x_\ast,
\]
for some $x_\ast \in \mathcal{X}_\ast$.
\end{corollary}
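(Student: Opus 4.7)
The plan is to apply Lemma \ref{lemma:bregman-monotone} with $\mathcal{S}=\mathcal{X}_\ast$, so I first need (i) Bregman monotonicity of $\{x_t\}_{t\geq 1}$ with respect to every minimizer, and (ii) every sequential cluster point of $\{x_t\}$ lies in $\mathcal{X}_\ast$.

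For (i), I would revisit the one-step computation in the proof of Theorem \ref{thm-nonsmooth}. That proof applied Lemma \ref{md-progress} and the convexity inequality $\langle g_t, x_t - x\rangle \geq f(x_t)-f(x)$ for the specific $x=x_\ast$ used in the stepsize, but the very same inequalities hold for any $x_\ast' \in \mathcal{X}_\ast$, since $f(x_\ast')=f(x_\ast)$. Substituting $\eta_t = \mu_\psi(f(x_t)-f(x_\ast))/\norm[g_t]^2_\ast$ into Lemma \ref{md-progress} applied at $x_\ast'$ and simplifying gives
\[
B_\psi(x_\ast'; x_{t+1}) \leq B_\psi(x_\ast'; x_t) - \frac{\mu_\psi(f(x_t)-f(x_\ast))^2}{2\norm[g_t]^2_\ast} \leq B_\psi(x_\ast'; x_t),
\]
which is Bregman monotonicity with respect to $\mathcal{X}_\ast$.

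For (ii), Bregman monotonicity together with strong convexity of $\psi$ gives $\tfrac{\mu_\psi}{2}\norm[x_{t+1}-x_\ast']^2 \leq B_\psi(x_\ast';x_1)$, so the iterates lie in a bounded set and cluster points exist. Take any subsequence $x_{b_t} \to x_l$. Theorem \ref{thm-nonsmooth} gives $\lim_{t\to\infty} f(x_t) = f(x_\ast)$, and since $f$ has subgradients bounded in dual norm by $G$, $f$ is $G$-Lipschitz with respect to $\norm$ (and hence continuous). Therefore $f(x_l) = \lim_t f(x_{b_t}) = f(x_\ast)$, i.e., $x_l \in \mathcal{X}_\ast$.

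Having established both hypotheses of Lemma \ref{lemma:bregman-monotone}, that lemma immediately yields $x_t \to x_\ast$ for some $x_\ast \in \mathcal{X}_\ast$. The main subtlety is verifying monotonicity with respect to every element of $\mathcal{X}_\ast$ (not only the one hard-coded in the stepsize); I expect this to be the only delicate point, and it is resolved by observing that the Polyak-type stepsize only depends on the optimal value $f(x_\ast)$, which is common to all minimizers.
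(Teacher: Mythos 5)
Your proposal is correct and follows essentially the same route as the paper: invoke Lemma \ref{lemma:bregman-monotone} with $\mathcal{S}=\mathcal{X}_\ast$, using the one-step descent inequality from Theorem \ref{thm-nonsmooth} for monotonicity and the convergence $f(x_t)\to f(x_\ast)$ plus continuity of $f$ to place every cluster point in $\mathcal{X}_\ast$. Your explicit check that Bregman monotonicity holds with respect to \emph{every} minimizer (because the Polyak stepsize depends only on the common optimal value $f(x_\ast)$) is a detail the paper's own proof leaves implicit, and it is indeed needed since the lemma's argument applies the monotone sequence $B_\psi(x_l;x_t)$ at an arbitrary cluster point $x_l$.
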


\begin{proof}
From Theorem \ref{thm-nonsmooth} we have the following inequality,
\[
B_\psi(x_\ast; x_{t+1}) \leq B_\psi(x_\ast;x_t) -\frac{\mu_{\psi}\left(f(x_t)-f(x_\ast)\right)^2}{ 2\norm[g_t]^2_\ast}.
\]
Therefore by Lemma \ref{lemma:bregman-monotone} it remains to show that all limit points of $x_t$ are contained within $\mathcal{X}_\ast$.

By Theorem \ref{thm-nonsmooth} we have that $f(x_t) \to f(x_\ast)$. For any limit point $x_l$ we have a subsequence $x_{b_t}$ such that $x_{b_t} \to x_l$ and by continuity of $f$ $x_l$ must be a solution,
\begin{align*}
     f(x_\ast) = \lim_{t\to \infty} f(x_t) = f(\lim_{t\to \infty }x_t) = f(\lim_{t\to \infty} x_{b_t}) = f(x_l).
\end{align*}
The result then follows by Lemma \ref{lemma:bregman-monotone}.
\end{proof}


\section{Proof of \alg{} lower bound in section \ref{sec:stepsize}}\label{lower-proof}

The lower bound of \alg{} (\ref{msps-bound})
when $f_\xi$ is L smooth, restated below, is vital to our analysis,
\[
    \frac{\mupsi}{2cL}\leq \eta_t = \frac{\mupsi(f_{\xi}(x_t)-f_{\xi}^\ast)}{c\norm[\nabla f(x_t)]_\ast^2}.
\]
Notice the above inequality is equivalent to
\[
    \frac{1}{2L}\leq  \frac{(f_{\xi}(x_t)-f_{\xi}^\ast)}{\norm[\nabla f(x_t)]_\ast^2}.
\]
The first inequality is attained by multiplying both
sides by $\nicefrac{\mupsi}{c}$. 
We provide a detailed proof below.

\setcounter{lemma}{3}
\begin{lemma}\label{smooth-lower}
If $f : \reals^n \to \reals$ is L-smooth with respect to a norm $\norm$ then
\[
  \frac{\norm[\nabla f(x)]_\ast^2}{2L} \leq  f(x)-\inf_{y \in \reals^n} f(y).
\]
Rearranging and defining $f^\ast = \inf_{y \in \reals^n} f(y)$ gives
\[
    \frac{1}{2L} \leq \frac{f(x)-f^\ast}{\norm[\nabla f(x)]_\ast^2}.
\]
\end{lemma}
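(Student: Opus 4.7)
The plan is to follow the standard self-bounding argument for smooth functions, generalized from the Euclidean case to an arbitrary norm. The starting point will be the upper bound on $f$ that follows from $L$-smoothness with respect to $\norm$: for every $x, y \in \reals^n$,
\[
f(y) \leq f(x) + \langle \nabla f(x), y-x\rangle + \frac{L}{2}\norm[y-x]^2.
\]
Fixing $x$ and taking the infimum of both sides over $y$ gives
\[
\inf_{y\in\reals^n} f(y) \leq f(x) + \inf_{d \in \reals^n}\Bigl\{\langle \nabla f(x), d\rangle + \frac{L}{2}\norm[d]^2\Bigr\},
\]
so the task reduces to evaluating the quadratic minimization on the right-hand side in a general norm.

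The second step is the dual-norm identity
\[
\inf_{d\in\reals^n}\Bigl\{\langle g, d\rangle + \frac{L}{2}\norm[d]^2\Bigr\} = -\frac{\norm[g]_\ast^2}{2L},
\]
applied with $g = \nabla f(x)$. To establish it, I would first lower bound the expression by writing $\langle g, d\rangle \geq -\norm[g]_\ast \norm[d]$ (directly from the definition of the dual norm) and minimizing the resulting one-dimensional quadratic in $\norm[d]$. For the matching upper bound, I would invoke the fact that in $\reals^n$ the supremum defining $\norm[g]_\ast$ is attained (by compactness of the unit ball), producing some $u$ with $\norm[u] = 1$ and $\langle g, u\rangle = \norm[g]_\ast$; then taking $d = -(\norm[g]_\ast/L)u$ achieves the lower bound with equality.

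Combining these two steps yields
\[
\inf_{y\in\reals^n} f(y) \leq f(x) - \frac{\norm[\nabla f(x)]_\ast^2}{2L},
\]
and rearranging gives the claimed inequality. The second displayed form in the lemma is then immediate algebra.

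The only subtle point, and the step I would pay most attention to, is the dual-norm minimization: specifically, the attainment of the supremum in the definition of $\norm[\cdot]_\ast$, which is what lets us realize the lower bound with equality. In finite dimensions this is standard (continuous function on a compact set), but it is the one place where the argument departs from the purely Euclidean textbook version and deserves to be called out explicitly.
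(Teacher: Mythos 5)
Your proof is correct and follows essentially the same route as the paper's: apply the smoothness upper bound $f(y) \leq f(x) + \langle \nabla f(x), y-x\rangle + \frac{L}{2}\norm[y-x]^2$, minimize over the displacement, and evaluate that minimum as $-\norm[\nabla f(x)]_\ast^2/(2L)$ via the dual norm. The only cosmetic difference is in how the inner minimization is handled --- you exhibit an explicit minimizer $d = -(\norm[\nabla f(x)]_\ast/L)\,u$ using attainment of the dual-norm supremum, whereas the paper parametrizes $y - x = rz$ with $r \geq 0$, $\norm[z] \leq 1$ and computes the nested infima directly --- but both amount to the same standard calculation.
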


\begin{proof}
Since $f$ is L-smooth we have
\[
f(y) \leq f(x) + \langle \nabla f(x), y-x\rangle +\frac{L}{2}\norm[x-y]^2 \quad \forall x,y \in \reals^n.
\]
Therefore we have the following upper bound on 
$\inf_y f(y)$.
\begin{align*}
    \inf_y f(y) &\leq \min_{y} \left\{f(x) + \langle \nabla f(x), y-x\rangle +\frac{L}{2}\norm[x-y]^2 \right\} \\
    &=\underset{r \geq 0, \norm[z]\leq 1}{\min} \left\{f(x) + r\langle \nabla f(x), z\rangle +\frac{L}{2}r^2\norm[z]^2 \right\}\\
    &\leq\underset{r \geq 0, \norm[z]\leq 1}{\min} \left\{f(x) + r\langle \nabla f(x), z\rangle +\frac{L}{2}r^2 \right\} \\
    &= f(x) + \underset{r \geq 0}{\min} \left\{ \min_{\norm[z]\leq 1} \left\{r\langle \nabla f(x), z\rangle \right\}  + \frac{L}{2}r^2\right\}\\
    &= f(x) + \underset{r \geq 0}{\min} \left\{  -r\max_{\norm[z]\leq 1} \left\{ \langle \nabla f(x), -z\rangle \right\} + \frac{L}{2}r^2 \right\}\\
    &= f(x) + \underset{r \geq 0}{\min} \left \{ -r \norm[\nabla f(x)]_\ast +\frac{L}{2}r^2\right\} \mbox{ by the definition of $\norm_\ast$} \\
    &\overset{(r = \norm[\nabla f(x)]_\ast/L)}{=} f(x) -\frac{\norm[\nabla f(x)]_\ast^2}{L} + \frac{\norm[\nabla f(x)]_\ast^2}{2L}
\end{align*}
Simplifying and rearranging gives the result.
\end{proof}

\section{Proofs for section \ref{rel-section}}\label{sec:proofs-rel}
In this section we provide proofs of our main results in 
the relative smooth setting. 
For convenience we denote the expectation conditional upon $(\xi_1, \xi_2,\cdots, \xi_t)$ as $\expect[t]{\cdot}$.
All statements hold almost surely.

\subsection{Proof of Lemma \ref{rel-smooth-lemma}}
\setcounter{lemma}{1}
\begin{lemma}
Suppose $f$ is $L$ smooth relative to $\psi$.
Then if $\eta \leq \frac{1}{L}$  we
have
\[
-B_\psi(x_{t+1};x_{t}) + \eta\langle \nabla f(x_t), x_t-x_{t+1}\rangle\leq \eta (f(x_t)-f(x_{t+1})).
\]
\end{lemma}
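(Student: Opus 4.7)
The plan is to unpack the definition of relative smoothness and rearrange. By relative $L$-smoothness of $f$ with respect to $\psi$, applied at the pair $(x_{t+1}, x_t)$, we have $B_f(x_{t+1};x_t) \leq L\, B_\psi(x_{t+1};x_t)$. Expanding the Bregman divergence on the left gives
\[
f(x_{t+1}) - f(x_t) - \langle \nabla f(x_t), x_{t+1}-x_t\rangle \leq L\, B_\psi(x_{t+1};x_t).
\]
Rewriting the inner product as $\langle \nabla f(x_t), x_t - x_{t+1}\rangle$ and moving terms around yields
\[
\langle \nabla f(x_t), x_t - x_{t+1}\rangle \leq L\, B_\psi(x_{t+1};x_t) + f(x_t) - f(x_{t+1}).
\]

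Next I would multiply through by $\eta > 0$ to obtain
\[
\eta\langle \nabla f(x_t), x_t - x_{t+1}\rangle \leq \eta L\, B_\psi(x_{t+1};x_t) + \eta\bigl(f(x_t) - f(x_{t+1})\bigr).
\]
The hypothesis $\eta \leq 1/L$ gives $\eta L \leq 1$, and since $B_\psi(x_{t+1};x_t) \geq 0$ by convexity of $\psi$, we can replace $\eta L\, B_\psi(x_{t+1};x_t)$ by $B_\psi(x_{t+1};x_t)$. Subtracting $B_\psi(x_{t+1};x_t)$ from both sides gives exactly the claimed inequality.

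There is no real obstacle here; the proof is a one-line manipulation of the definition of relative smoothness combined with the stepsize condition. The only subtle point is that one needs $B_\psi(x_{t+1};x_t) \geq 0$ to justify enlarging the $\eta L$ factor to $1$, which requires $\psi$ to be convex on $\domainBregman$ (as assumed throughout). No convexity of $f$ itself is needed.
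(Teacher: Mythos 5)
Your proof is correct and follows essentially the same route as the paper's: both arguments reduce to the identity $\eta\langle \nabla f(x_t), x_t-x_{t+1}\rangle = \eta B_f(x_{t+1};x_t) + \eta(f(x_t)-f(x_{t+1}))$ combined with $\eta L\, B_\psi(x_{t+1};x_t) \leq B_\psi(x_{t+1};x_t)$, which needs $B_\psi \geq 0$. The paper merely packages that last step as ``$f$ is also $\tfrac{1}{\eta}$-smooth relative to $\psi$,'' whereas you apply $\eta L \leq 1$ explicitly at the end; the content is identical, and you correctly flag convexity of $\psi$ as the one point that must be invoked.
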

\begin{proof}
Since $f$ is $L$ smooth
relative to $\psi$ it is also
$\frac{1}{\eta}$ smooth relative
to $\psi$ (because $L \leq \frac{1}{\eta}$ and $\psi$ is convex). Therefore,
\begin{align*}
    B_f(x_{t+1};x_t) &\leq \frac{1}{\eta}B_\psi(x_{t+1};x_t) \\
\implies -B_\psi(x_{t+1};x_t) + \eta B_f(x_{t+1};x_t) &\leq 0.
\end{align*}

Now we examine the inner product
$\eta\langle \nabla f(x_t), x_t-x_{t+1}\rangle$,
\begin{align*}
  \eta\langle \nabla f(x_t), x_t-x_{t+1}\rangle &= \eta \left( f(x_{t+1}) -f(x_t)-\langle \nabla f(x_t),x_{t+1}-x_t\rangle
  +f(x_t)-f(x_{t+1})
  \right) \\
  &=\eta \left(
  B_f(x_{t+1};x_t)
  +f(x_t)-f(x_{t+1})
  \right).
\end{align*}

Therefore, we have the following
\begin{align*}
    -B_\psi(x_{t+1};x_t) + \eta\langle \nabla f(x_t), x_t-x_{t+1}\rangle &=  -B_\psi(x_{t+1};x_t) + \eta B_f(x_{t+1};x_t)
  +\eta(f(x_t)-f(x_{t+1})\\
  &\leq \eta(f(x_t)-f(x_{t+1}).
\end{align*}
\end{proof}

\subsection{Proof of Theorem \ref{thm-rel-smooth-strong}}
\setcounter{theorem}{0}
\begin{theorem}
Assume $\psi$ satisfies assumption 
\ref{int-assumption}.
Furthermore assume $f$ to be $\mu$-strongly convex 
relative to $\psi$ over $\decSet$,
and $f_\xi$ to be  $L$-smooth relative to $\psi$ over $\decSet$ almost surely.
Then SMD with stepsize $\eta \leq \frac{1}{L}$
guarantees
\[
   \expect{B_\psi(x_\ast;x_{t+1})} \leq (1-\mu \eta)^tB_\psi(x_\ast;x_1) + \frac{\sigma_\mathcal{X}^2}{\mu}.
\]
\end{theorem}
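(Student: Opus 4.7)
The plan is to combine the one-step mirror descent progress bound in Lemma \ref{md-progress} with the relative smoothness lemma (Lemma \ref{rel-smooth-lemma}) and the relative strong convexity assumption, and then unroll the resulting contraction.

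Concretely, I would start from the convex inequality \eqref{progress-convex} applied to $g_t = \nabla f_{\xi_t}(x_t)$:
\[
B_\psi(x_\ast; x_{t+1}) \leq B_\psi(x_\ast; x_t) - \eta \langle \nabla f_{\xi_t}(x_t), x_t - x_\ast\rangle - B_\psi(x_{t+1}; x_t) + \eta \langle \nabla f_{\xi_t}(x_t), x_t - x_{t+1}\rangle.
\]
Since $f_{\xi_t}$ is $L$-smooth relative to $\psi$ and $\eta \leq 1/L$, Lemma \ref{rel-smooth-lemma} (applied to $f_{\xi_t}$) bounds the last two terms by $\eta(f_{\xi_t}(x_t) - f_{\xi_t}(x_{t+1}))$. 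Next, I would rewrite the inner product against $x_t - x_\ast$ using the definition of the Bregman divergence of $f_{\xi_t}$, namely $\langle \nabla f_{\xi_t}(x_t), x_t - x_\ast\rangle = f_{\xi_t}(x_t) - f_{\xi_t}(x_\ast) + B_{f_{\xi_t}}(x_\ast; x_t)$. Substituting, the $f_{\xi_t}(x_t)$ terms cancel and I get
\[
B_\psi(x_\ast; x_{t+1}) \leq B_\psi(x_\ast; x_t) + \eta f_{\xi_t}(x_\ast) - \eta B_{f_{\xi_t}}(x_\ast; x_t) - \eta f_{\xi_t}(x_{t+1}).
\]

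The key new ingredient is to bound $f_{\xi_t}(x_{t+1}) \geq f_{\xi_t}^\ast(\decSet)$, which holds since $x_{t+1}\in\decSet$; this is exactly where the constrained refinement $\sigma_\decSet^2$ enters, in place of the unconstrained $\sigma^2$. Taking expectation conditional on $x_t$ and using the unbiasedness $\expect[\xi]{\nabla f_\xi(x_t)} = \nabla f(x_t)$ gives $\expect[\xi_t]{B_{f_{\xi_t}}(x_\ast; x_t)} = B_f(x_\ast; x_t)$, together with $f(x_\ast) - \expect[\xi]{f_\xi^\ast(\decSet)} = \sigma_\decSet^2$. Relative strong convexity of $f$ then yields $B_f(x_\ast; x_t) \geq \mu B_\psi(x_\ast; x_t)$, producing the one-step contraction
\[
\expect[t]{B_\psi(x_\ast; x_{t+1})} \leq (1 - \eta\mu)\, B_\psi(x_\ast; x_t) + \eta\, \sigma_\decSet^2.
\]

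Finally I would take total expectation, unroll the recursion $t$ times, and bound the geometric partial sum $\eta \sigma_\decSet^2 \sum_{s=0}^{t-1}(1-\eta\mu)^s \leq \sigma_\decSet^2/\mu$ to recover the claimed bound. The main subtleties—not genuine obstacles—are (i) correctly invoking $f_\xi^\ast(\decSet)$ rather than $f_\xi^\ast$ to keep the constrained refinement intact (which requires $x_{t+1}\in\decSet$, guaranteed by the update), and (ii) verifying that the Bregman divergence identity $\expect[\xi]{B_{f_\xi}(x_\ast;x_t)} = B_f(x_\ast;x_t)$ holds pointwise in $x_t$ so that relative strong convexity of the deterministic $f$ can be applied, even though no convexity of individual $f_\xi$ is assumed.
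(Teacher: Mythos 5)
Your proposal is correct and follows essentially the same route as the paper's proof: one-step progress via Lemma \ref{md-progress}, Lemma \ref{rel-smooth-lemma} to absorb the last two terms, the bound $f_{\xi_t}(x_{t+1}) \geq f_{\xi_t}^\ast(\decSet)$ to bring in $\sigma_\decSet^2$, relative strong convexity of $f$ applied to $B_f(x_\ast;x_t)$ after conditioning, and unrolling the contraction. The only cosmetic difference is that you form the per-sample divergence $B_{f_{\xi_t}}(x_\ast;x_t)$ before taking expectation, whereas the paper takes the expectation first and then identifies $B_f(x_\ast;x_t)$; both orderings are equivalent and neither requires convexity of the individual $f_\xi$.
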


\begin{proof}
From Lemma \ref{md-progress}
(before applying strong convexity but assuming convexity of $\psi$)
we have
\begin{align*}
    B_{\psi}(x_\ast;x_{t+1}) &\leq   B_{\psi}(x_\ast;x_{t})-\eta\langle \nabla f_{\xi_t}(x_t),x_t-x_\ast \rangle -B_{\psi}(x_{t+1};x_t)+\eta \langle \nabla f_{\xi_t}(x_t), x_t-x_{t+1}\rangle \\
    & \leq  B_{\psi}(x_\ast;x_{t})-\eta\langle \nabla f_{\xi_t}(x_t),x_t-x_\ast \rangle + \eta (f_{\xi_t}(x_t)-f_{\xi_t}(x_{t+1})) \mbox{ (by Lemma \ref{rel-smooth-lemma})}\\
    &\leq B_{\psi}(x_\ast;x_{t})-\eta\langle \nabla f_{\xi_t}(x_t),x_t-x_\ast \rangle + \eta (f_{\xi_t}(x_t)-f^\ast_{\xi_t}(\decSet)) \mbox{ (by definition of $f^\ast_{\xi_t}(\decSet)$)}\\
    &=B_{\psi}(x_\ast;x_{t})-\eta\langle \nabla f_{\xi_t}(x_t),x_t-x_\ast \rangle
    +\eta (f_{\xi_t}(x_t)-f_{\xi_t}(x_\ast))
    +\eta (f_{\xi_t}(x_\ast)-f_{\xi_t}^\ast(\decSet)).
\end{align*}

By taking an expectation conditioning on $(\xi_1,\cdots,\xi_t)$
we obtain,
\begin{align}
    \expect[t]{B_{\psi}(x_\ast;x_{t+1})} &\leq  B_{\psi}(x_\ast;x_{t}) - \eta \langle \nabla f(x_t),x_t-x_\ast\rangle + \eta(f(x_t)-f(x_\ast)) + \eta (f(x_\ast) - \expect[t]{f_{\xi_t}^\ast(\decSet)})\nonumber \\
    &= B_{\psi}(x_\ast;x_{t}) -\eta \underbrace{\left(f(x_\ast) -f(x_t) - \langle \nabla f(x_t),x_\ast -x_t\rangle \right)}_{B_f(x_\ast;x_t)} + \eta (f(x_\ast) - \expect[t]{f_{\xi_t}^\ast(\decSet)}) \label{thm-rel-strong-checkpoint}\\
    &\leq B_{\psi}(x_\ast;x_{t})(1-\mu \eta) + \eta (f(x_\ast) - \expect[t]{f_{\xi_t}^\ast(\decSet)}) \mbox{ (by relative strongly convexity of $f$)}. \nonumber
\end{align}
Now by the tower property of expectations and applying the definition of $\sigma_\decSet^2$,
\begin{align*}
     \expect{B_{\psi}(x_\ast;x_{t+1})} &\leq \expect{B_{\psi}(x_\ast;x_{t})}(1-\mu \eta) +\eta \sigma_\decSet^2.
\end{align*}
Iterating the inequality gives,
\begin{align*}
     \expect{B_{\psi}(x_\ast;x_{t+1})} &\leq B_{\psi}(x_\ast;x_{1})(1-\mu \eta)^t +\sum_{s=0}^{t-1}\eta \sigma_\decSet^2(1-\mu \eta)^s\\
     &\leq B_{\psi}(x_\ast;x_{1})(1-\mu \eta)^t +\frac{\sigma_\decSet^2}{\mu}.
\end{align*}
Where the last inequality follows by $\sum_{s=0}^{t-1}(1-\mu \eta)^s \leq \sum_{s=0}^{\infty} (1-\mu \eta)^s = \nicefrac{1}{\mu \eta}$.
\end{proof}

\begin{corollary}
Under the same assumptions at Theorem \ref{thm-rel-smooth-strong}, if $\sigma^2_\decSet =0$ then $B_\psi(x_\ast;x_{t+1}) \to 0$ almost surely. 
\end{corollary}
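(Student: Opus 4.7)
The plan is to reuse the one–step conditional recursion already obtained in the proof of Theorem~\ref{thm-rel-smooth-strong}, strengthen it under interpolation to a pure supermartingale inequality, and then apply the Robbins--Siegmund almost supermartingale convergence theorem.

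First, I would restart from inequality \eqref{thm-rel-strong-checkpoint} in the proof of Theorem~\ref{thm-rel-smooth-strong}, which reads
\[
\expect[t]{B_\psi(x_\ast;x_{t+1})} \le B_\psi(x_\ast;x_t) - \eta\, B_f(x_\ast;x_t) + \eta\bigl(f(x_\ast)-\expect[t]{f_{\xi_t}^\ast(\decSet)}\bigr).
\]
Since $\xi_t$ is i.i.d.\ and independent of the history $(\xi_1,\dots,\xi_{t-1})$, the conditional expectation $\expect[t]{f_{\xi_t}^\ast(\decSet)}$ equals the unconditional $\expect[\xi]{f_\xi^\ast(\decSet)}$, and the hypothesis $\sigma_\decSet^2 = 0$ means precisely $f(x_\ast) = \expect[\xi]{f_\xi^\ast(\decSet)}$. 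Hence the noise term vanishes identically and we obtain the clean bound
\[
\expect[t]{B_\psi(x_\ast;x_{t+1})} \le B_\psi(x_\ast;x_t) - \eta\, B_f(x_\ast;x_t).
\]

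Next, since $B_\psi(x_\ast;x_t) \ge 0$ and $B_f(x_\ast;x_t)\ge 0$, this is a non-negative supermartingale inequality of Robbins--Siegmund type (with zero noise). The theorem yields two conclusions simultaneously: (i) $B_\psi(x_\ast;x_t)$ converges almost surely to some finite nonnegative random variable $V$, and (ii) $\sum_{t=1}^{\infty} B_f(x_\ast;x_t) < \infty$ almost surely. To identify $V$, I would invoke relative strong convexity of $f$ with respect to $\psi$, which gives $B_f(x_\ast;x_t) \ge \mu\, B_\psi(x_\ast;x_t)$. Combined with (ii) this forces $\sum_{t=1}^{\infty} B_\psi(x_\ast;x_t) < \infty$ almost surely, and together with the almost sure convergence from (i) the summand must tend to zero, so $V=0$ almost surely.

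The main technical subtlety, rather than an obstacle, is the justification that the conditional expectation of the ``noise'' term vanishes pointwise (not merely in expectation); this rests on the i.i.d.\ assumption, so that $\expect[t]{f_{\xi_t}^\ast(\decSet)}$ is deterministic and equal to $\expect[\xi]{f_\xi^\ast(\decSet)} = f(x_\ast)$. If one prefers to avoid citing Robbins--Siegmund, an equivalent route is to take a full expectation, sum the telescoping relation $\sum_{s=1}^{t}\eta\mu\,\expect{B_\psi(x_\ast;x_s)} \le B_\psi(x_\ast;x_1)$, deduce $\sum_s \expect{B_\psi(x_\ast;x_s)} < \infty$, apply monotone convergence/Fubini to get $\sum_s B_\psi(x_\ast;x_s) < \infty$ almost surely, and then use the fact that the sequence $B_\psi(x_\ast;x_t)$ is itself a nonnegative supermartingale (hence convergent a.s.) to conclude its limit is zero.
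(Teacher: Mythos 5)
Your proposal is correct and follows essentially the same route as the paper: both isolate the one-step conditional inequality from the proof of Theorem~\ref{thm-rel-smooth-strong}, observe that the noise term vanishes identically when $\sigma^2_\decSet=0$, and conclude via a standard almost-sure supermartingale convergence result (the paper cites Lemma~4.7 of Franci and Staudigl after first applying relative strong convexity to obtain the contraction $\expect[t]{B_\psi(x_\ast;x_{t+1})}\leq(1-\mu\eta)B_\psi(x_\ast;x_t)$, whereas you keep the $-\eta B_f$ term, invoke Robbins--Siegmund, and apply relative strong convexity afterwards). The difference is purely in the ordering of these two steps and the choice of citation, so the arguments are equivalent.
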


\begin{proof}
By Theorem \ref{thm-rel-smooth-strong} the following inequality holds:
\begin{align}
   \expect[t]{B_{\psi}(x_\ast;x_{t+1})}  \leq B_{\psi}(x_\ast;x_{t})(1-\mu \eta).
\end{align}
The result then follows by \citet{franci2022convergence}[Lemma 4.7].
\end{proof}

\subsection{Proof of Theorem \ref{thm-rel-non-convex}}
\setcounter{theorem}{2}
\begin{theorem}
Assume $\psi$ satisfies assumption 
\ref{int-assumption}.
Furthermore assume $f_\xi$ to be  $L$-smooth relative to $\psi$ over $\decSet$ almost surely.
Then SMD with stepsize $\eta \leq \frac{1}{L}$
guarantees
\[
   \expect{\frac{1}{t}\sum_{s=1}^t \divergence{f}{x_{\ast}}{x_s}} \leq \frac{\divergence{\psi}{x_\ast}{x_1}}{\eta t} +\sigma_\decSet^2.
\]
\end{theorem}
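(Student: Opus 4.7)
The plan is to follow the same opening moves as the proof of Theorem \ref{thm-rel-smooth-strong}, since the only assumption we drop here is relative strong convexity of $f$. First I would apply Lemma \ref{md-progress} (in the convex, non-strongly-convex form) with $g_t = \nabla f_{\xi_t}(x_t)$ to obtain
\[
B_\psi(x_\ast;x_{t+1}) \leq B_\psi(x_\ast;x_t) - \eta\langle \nabla f_{\xi_t}(x_t), x_t - x_\ast\rangle - B_\psi(x_{t+1};x_t) + \eta\langle \nabla f_{\xi_t}(x_t), x_t - x_{t+1}\rangle.
\]
Since each $f_{\xi_t}$ is $L$-smooth relative to $\psi$ on $\decSet$ and $\eta \leq 1/L$, Lemma \ref{rel-smooth-lemma} (applied to $f_{\xi_t}$) bounds the last two terms by $\eta(f_{\xi_t}(x_t) - f_{\xi_t}(x_{t+1}))$, and then lower-bounding $f_{\xi_t}(x_{t+1})$ by $f_{\xi_t}^\ast(\decSet)$ yields
\[
B_\psi(x_\ast;x_{t+1}) \leq B_\psi(x_\ast;x_t) - \eta\langle \nabla f_{\xi_t}(x_t), x_t - x_\ast\rangle + \eta(f_{\xi_t}(x_t) - f_{\xi_t}^\ast(\decSet)).
\]

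Next I would add and subtract $\eta f_{\xi_t}(x_\ast)$ and take expectation conditional on $(\xi_1,\dots,\xi_{t-1})$. Using $\expect[\xi_t]{\nabla f_{\xi_t}(x_t)} = \nabla f(x_t)$ and $\expect[\xi_t]{f_{\xi_t}(\cdot)} = f(\cdot)$, the three gradient-containing terms collapse (exactly as in \eqref{thm-rel-strong-checkpoint}) into $-\eta B_f(x_\ast;x_t)$, and the remaining piece $\eta(f(x_\ast) - \expect[t]{f_{\xi_t}^\ast(\decSet)})$ is at most $\eta \sigma_\decSet^2$ by definition. Taking a full expectation and rearranging gives the one-step bound
\[
\eta\,\expect{B_f(x_\ast;x_t)} \leq \expect{B_\psi(x_\ast;x_t)} - \expect{B_\psi(x_\ast;x_{t+1})} + \eta \sigma_\decSet^2.
\]

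Finally, I would sum from $s=1$ to $t$, telescope the $B_\psi$ terms, and drop the non-negative leftover $\expect{B_\psi(x_\ast;x_{t+1})}$ on the right. Dividing by $\eta t$ delivers
\[
\expect{\frac{1}{t}\sum_{s=1}^t B_f(x_\ast;x_s)} \leq \frac{B_\psi(x_\ast;x_1)}{\eta t} + \sigma_\decSet^2,
\]
which is the claim. There is really no serious obstacle here: the structural work was done in Lemma \ref{rel-smooth-lemma} and in the manipulation already carried out for Theorem \ref{thm-rel-smooth-strong}. The only subtlety worth flagging is that, unlike the strongly convex case, we do not have a contraction factor to iterate, so we must switch from an iterated geometric sum to a telescoping sum, which is why the bound appears on the averaged Bregman divergence $\frac{1}{t}\sum_s B_f(x_\ast;x_s)$ rather than on the last iterate. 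Note also that convexity of $f$ is not needed for any step above; $B_f$ is well defined from the differentiability of $f$ inherited from the $f_\xi$, and the argument never requires $B_f \geq 0$.
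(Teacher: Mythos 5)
Your proposal is correct and follows essentially the same route as the paper: the paper's proof simply observes that the checkpoint inequality derived for Theorem \ref{thm-rel-smooth-strong} (via Lemma \ref{md-progress}, Lemma \ref{rel-smooth-lemma}, the bound $f_{\xi_t}(x_{t+1})\geq f_{\xi_t}^\ast(\decSet)$, and the conditional expectation collapsing the gradient terms into $-\eta B_f(x_\ast;x_t)$) never used relative strong convexity, and then telescopes exactly as you do. Your closing remarks --- that no contraction is available so one telescopes instead, and that convexity of $f$ is never invoked --- are both accurate and consistent with the paper's discussion.
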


\begin{proof}
Note that in the proof of Theorem \ref{thm-rel-smooth-strong}
relative strong convexity is not used to attain the inequality
(\ref{thm-rel-strong-checkpoint}). 
Therefore we have,
\begin{align*}
    \expect[t]{B_{\psi}(x_\ast;x_{t+1})} &\leq B_{\psi}(x_\ast;x_{t}) -\eta B_f(x_\ast;x_t)+\eta (f(x_\ast) - \expect[t]{f_{\xi_t}^\ast(\decSet)}). 
\end{align*}
After applying the tower property, definition of $\sigma^2$,
and rearranging, we have
\begin{align*}
    \eta \expect{B_f(x_\ast;x_t)} &\leq \expect{B_{\psi}(x_\ast;x_{t})}- \expect{B_{\psi}(x_\ast;x_{t+1})} +\eta \sigma_\decSet^2. 
\end{align*}
Summing across time and dividing by $\eta t$ gives the result.
\end{proof}

\begin{corollary}
Under the assumptions of Theorem \ref{thm-rel-non-convex}, if $f$ is convex and $\sigma^2_\decSet =0$ then $B_f(x_\ast;x_t) \to 0$ almost surely.
\end{corollary}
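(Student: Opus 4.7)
The plan is to reuse the one-step inequality derived in the proof of Theorem \ref{thm-rel-non-convex} and convert it into an almost-sure statement via a supermartingale argument.

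First, I would revisit the key intermediate inequality obtained in the proof of Theorem \ref{thm-rel-non-convex}, namely
\[
\expect[t]{B_{\psi}(x_\ast;x_{t+1})} \leq B_{\psi}(x_\ast;x_{t}) - \eta B_f(x_\ast;x_t) + \eta\bigl(f(x_\ast) - \expect[t]{f_{\xi_t}^\ast(\decSet)}\bigr).
\]
Under constrained interpolation, $\sigma_\decSet^2 = 0$, which means $f(x_\ast) = \expect{f_\xi^\ast(\decSet)}$, and since $x_\ast$ minimizes $f$ while $f_\xi^\ast(\decSet) \leq f_\xi(x_\ast)$ almost surely, this forces $f_\xi^\ast(\decSet) = f_\xi(x_\ast)$ almost surely. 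Hence the residual term vanishes and I obtain
\[
\expect[t]{B_{\psi}(x_\ast;x_{t+1})} \leq B_{\psi}(x_\ast;x_{t}) - \eta B_f(x_\ast;x_t).
\]

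Next I would invoke the convexity of $f$, which gives $B_f(x_\ast;x_t) \geq 0$. The display above then shows that $\{B_\psi(x_\ast;x_t)\}_{t\geq 1}$ is a non-negative supermartingale with respect to the natural filtration generated by $(\xi_1,\ldots,\xi_{t-1})$, and the non-negative ``dissipation'' term is $\eta B_f(x_\ast;x_t)$. Applying the Robbins--Siegmund lemma (with $\alpha_t = \beta_t = 0$ and $U_t = \eta B_f(x_\ast;x_t)$) yields two conclusions almost surely: $B_\psi(x_\ast;x_t)$ converges to a finite limit, and
\[
\sum_{t=1}^{\infty} \eta B_f(x_\ast;x_t) < \infty.
\]

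The convergence of this series immediately implies $B_f(x_\ast;x_t) \to 0$ almost surely, which is the claim. The only subtle step is the justification that $f_\xi^\ast(\decSet) = f_\xi(x_\ast)$ almost surely when $\sigma_\decSet^2 = 0$; this is not the main obstacle but is worth stating explicitly since it is what collapses the noise term and makes the supermartingale argument applicable. Everything else is a direct appeal to Robbins--Siegmund, so no heavy machinery is required beyond what is already in the proof of Theorem \ref{thm-rel-non-convex}.
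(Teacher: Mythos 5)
Your proposal is correct and follows essentially the same route as the paper: specialize the one-step inequality from Theorem \ref{thm-rel-non-convex} under $\sigma^2_\decSet = 0$ to get $\expect[t]{B_{\psi}(x_\ast;x_{t+1})} \leq B_{\psi}(x_\ast;x_{t}) - \eta B_f(x_\ast;x_t)$, note $B_f(x_\ast;x_t)\geq 0$ by convexity, and invoke the Robbins--Siegmund lemma to conclude $B_f(x_\ast;x_t)\to 0$ almost surely. Your explicit justification that the noise term vanishes (and the summability of the dissipation term) is a slightly more detailed rendering of the same argument.
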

\begin{proof}
Under interpolation 
$f(x_\ast) -\expect[t]{f_{\xi_t}^\ast(\decSet)}=0$.
From Theorem \ref{thm-rel-non-convex} the following inequality holds:
\begin{align}
    \expect[t]{B_{\psi}(x_\ast;x_{t+1})} &\leq B_{\psi}(x_\ast;x_{t}) -\eta B_f(x_\ast;x_t).
\end{align}
Since $f$ is convex $B_f(x_\ast;x_t) \geq 0$,
therefore, by the Robbins-Siegmun Lemma (\eg \citep{franci2022convergence}[Lemma 4.1])
$B_f(x_\ast;x_t) \to 0$ almost surely.

\end{proof}

\begin{proposition}\label{prop:sym}
Let $f(x) = \frac{1}{2}\left(\langle g, x\rangle -b \right)^2$ then $B_f(x;y) = B_f(y;x)$.
\end{proposition}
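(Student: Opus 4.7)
The plan is to compute $B_f(x;y)$ directly from the definition and observe that the result depends only on $\langle g, x-y\rangle^2$, which is manifestly symmetric in $x$ and $y$.

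First I would compute the gradient: since $f(x) = \tfrac{1}{2}(\langle g,x\rangle - b)^2$, the chain rule gives $\nabla f(x) = (\langle g,x\rangle - b)\,g$. To streamline the algebra, I would introduce the shorthand $a = \langle g,x\rangle - b$ and $c = \langle g,y\rangle - b$, so that $f(x) = \tfrac{1}{2}a^2$, $f(y) = \tfrac{1}{2}c^2$, $\nabla f(y) = c g$, and $\langle g, x-y\rangle = a - c$.

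Substituting into the definition of the Bregman divergence yields
\[
B_f(x;y) = \tfrac{1}{2}a^2 - \tfrac{1}{2}c^2 - c(a-c) = \tfrac{1}{2}a^2 - ca + \tfrac{1}{2}c^2 = \tfrac{1}{2}(a-c)^2 = \tfrac{1}{2}\langle g, x-y\rangle^2.
\]
Since $\langle g, x-y\rangle^2 = \langle g, y-x\rangle^2$, the same computation with the roles of $x$ and $y$ swapped gives $B_f(y;x) = \tfrac{1}{2}\langle g, y-x\rangle^2 = B_f(x;y)$, which is the desired symmetry.

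There is essentially no obstacle here: the statement is a short algebraic identity. The only thing worth noting is the interpretation — the quadratic loss associated with a single linear measurement is, up to a constant, a function of $\langle g, x\rangle$ alone, so its Bregman divergence reduces to the one-dimensional Euclidean divergence of $a$ and $c$, which is trivially symmetric. This is exactly the structural fact used in the preceding application to linear systems.
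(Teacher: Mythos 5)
Your proof is correct and follows essentially the same route as the paper: a direct computation of $B_f(x;y)$ from the definition, showing the result is symmetric in $x$ and $y$. Your version is slightly tidier in that it arrives at the explicit closed form $\tfrac{1}{2}\langle g, x-y\rangle^2$, whereas the paper stops at the equivalent expression $\tfrac{1}{2}\langle g,x\rangle^2 + \tfrac{1}{2}\langle g,y\rangle^2 - \langle g,x\rangle\langle g,y\rangle$.
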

\begin{proof}
Note that $\nabla f(x) = \left(\langle g, x\rangle -b \right)g$.
Therefore,
\begin{align}
    B_f(x;y) &= \frac{1}{2}\left(\langle g, x\rangle -b \right)^2-\frac{1}{2}\left(\langle g, y\rangle -b \right)^2-\left(\langle g, y\rangle -b \right)\langle g,x-y\rangle\\
    &= \frac{1}{2}(\langle g, x \rangle)^2 -\langle g, x\rangle b + b^2 
    -\frac{1}{2}(\langle g, y \rangle)^2 +\langle g, y\rangle b - b^2
    -\left(\langle g, y\rangle -b \right)\langle g,x-y\rangle\\
    &=\frac{1}{2}(\langle g, x \rangle)^2 -\langle g, x\rangle b 
    -\frac{1}{2}(\langle g, y \rangle)^2 +\langle g, y\rangle b 
    -\left(\langle g, y\rangle -b \right)\langle g,x-y\rangle\\
    &=\frac{1}{2}(\langle g, x \rangle)^2 -\langle g, x\rangle b 
    -\frac{1}{2}(\langle g, y \rangle)^2 +\langle g, y\rangle b 
    -\langle g, y\rangle \langle g,x\rangle +b\langle g,x \rangle 
    +(\langle g,y \rangle)^2 -b\langle g, y\rangle \\
    &= \frac{1}{2}(\langle g, x \rangle)^2 +\frac{1}{2}(\langle g, y \rangle)^2 - \langle g, y\rangle \langle g,x\rangle.
\end{align}
It follows that $B_f$ is symmetric.
\end{proof}

\section{Proofs for section \ref{adaptive-section}}\label{sec:adaptive-proofs}

In this section we provide proofs of our main results in 
the smooth setting. 
For convenience we denote the expectation conditional upon $(\xi_1, \xi_2,\cdots, \xi_t)$ as $\expect[t]{\cdot}$.
All statements hold almost surely.

Notice that by definition of \algmax{} we have the following
upper bound 
\begin{align*}
    \eta_t \leq \frac{\mupsi(f_i(x_t)-f_i^\ast)}{c\norm[\nabla f_i(x_t)]^2_\ast}.
\end{align*}
Muliplying both sides of the inequality with $\nicefrac{\eta_t \norm[\nabla f_i(x_t)]^2_\ast}{\mupsi}$ gives the following
useful inequality,
\begin{align}
    \frac{\eta^2_t \norm[\nabla f_i(x_t)]^2_\ast}{\mupsi} \leq \frac{\eta_t(f_i(x_t)-f_i^\ast)}{c}.\label{grad-bound}
\end{align}
The inequality holds with equality for \alg{}.

\subsection{Proof of Theorem \ref{thm-rel-strong}}

\setcounter{theorem}{4}
\begin{theorem}
Assume $f_\xi$ is convex and $L$-smooth almost surely with respect to
the norm $\norm$.
Furthermore, assume that $f$ is $\mu$-strongly convex relative to $\psi$ over $\decSet$, where $\psi$ is $\mupsi$-strongly
convex over $\decSet$ with respect to the norm $\norm$ and assumption
\ref{int-assumption} holds.
Then SMD with  \algmax{} and $c \geq \frac{1}{2}$ guarantees
\[
\expect{B_\psi(x_\ast; x_{t+1})} \leq (1-\mu\alpha)^t B_\psi(x_\ast; x_{1}) + \frac{\eta_b\sigma^2}{\alpha\mu}.
\]
Where $\alpha \coloneqq \min\{\nicefrac{\mupsi}{2cL}, \eta_b\}$.
\end{theorem}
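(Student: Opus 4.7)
The plan is to follow the template of the SPS proof in \citet{loizou2020stochastic} but to pay careful attention to the factor-of-two bookkeeping that arises in the mirror setting (the MD lemma produces $-\eta_t\langle g_t, x_t-x_\ast\rangle$ rather than $-2\eta_t\langle g_t, x_t-x_\ast\rangle$, and the Fenchel--Young term is $\eta_t^2\|g_t\|_\ast^2/(2\mu_\psi)$). First I would invoke the strongly-convex form of Lemma \ref{md-progress} to obtain the one-step bound in terms of $\langle \nabla f_{\xi_t}(x_t), x_t-x_\ast\rangle$ and $\|\nabla f_{\xi_t}(x_t)\|_\ast^2$. Then I would apply the self-bounding property of \algmax{} — namely $\eta_t\|\nabla f_{\xi_t}(x_t)\|_\ast^2 \leq \mu_\psi(f_{\xi_t}(x_t)-f_{\xi_t}^\ast)/c$, which follows directly from the definition of the stepsize — to upgrade the Fenchel-Young residual into $\eta_t(f_{\xi_t}(x_t)-f_{\xi_t}^\ast)/(2c)$.

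The heart of the argument is to decouple the stochastic stepsize $\eta_t$ from the stochastic gradient so that expectation can be applied cleanly. I would write $\eta_t = \alpha + (\eta_t - \alpha)$, where the nonnegativity $\eta_t \geq \alpha$ is guaranteed by the lower bound in \eqref{msps-bound} combined with $\alpha = \min\{\mu_\psi/(2cL), \eta_b\}$. For the $(\eta_t-\alpha)$-piece of $-\eta_t\langle \nabla f_{\xi_t}, x_t-x_\ast\rangle$, convexity of $f_{\xi_t}$ gives $-(\eta_t-\alpha)\langle \nabla f_{\xi_t}, x_t-x_\ast\rangle \leq -(\eta_t-\alpha)(f_{\xi_t}(x_t)-f_{\xi_t}^\ast) + (\eta_t-\alpha)(f_{\xi_t}(x_\ast)-f_{\xi_t}^\ast)$; combining the $(f_{\xi_t}(x_t)-f_{\xi_t}^\ast)$-terms yields a coefficient $\alpha - \eta_t(1 - \tfrac{1}{2c})$ which, maximized subject to $\eta_t\geq\alpha$ and $c\geq 1/2$, is $\leq \alpha/(2c)$. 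The surviving $(f_{\xi_t}(x_\ast)-f_{\xi_t}^\ast)$-term is bounded via $\eta_t \leq \eta_b$ by $(\eta_b-\alpha)(f_{\xi_t}(x_\ast)-f_{\xi_t}^\ast)$.

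With the stochastic stepsize now replaced by deterministic constants, I would take the conditional expectation, using $\mathbb{E}_t[\nabla f_{\xi_t}(x_t)] = \nabla f(x_t)$ and the definition of $\sigma^2$, to obtain a bound involving $-\alpha\langle \nabla f(x_t), x_t-x_\ast\rangle + \alpha(f(x_t)-f(x_\ast))/(2c) + \alpha\sigma^2/(2c) + (\eta_b-\alpha)\sigma^2$. Relative strong convexity of $f$ in the form $\langle \nabla f(x_t), x_t-x_\ast\rangle \geq (f(x_t)-f(x_\ast)) + \mu B_\psi(x_\ast; x_t)$ converts the inner product into the desired contraction $-\alpha\mu B_\psi(x_\ast; x_t)$ and leaves a non-positive $-\alpha(1 - \tfrac{1}{2c})(f(x_t)-f(x_\ast))$ that can be dropped. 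Finally, since $c\geq 1/2$ implies $\alpha/(2c) \leq \alpha$, the noise terms collapse to $\eta_b\sigma^2$, giving $\mathbb{E}_t[B_\psi(x_\ast; x_{t+1})] \leq (1-\mu\alpha)B_\psi(x_\ast; x_t) + \eta_b\sigma^2$; a standard geometric-series iteration delivers the claimed bound with neighborhood $\eta_b\sigma^2/(\alpha\mu)$.

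The main obstacle — and the step that most easily inflates the neighborhood by a spurious factor of two — is the coefficient accounting in paragraph two: one must recognize that the worst-case value of $\alpha - \eta_t(1 - \tfrac{1}{2c})$ over $\eta_t \in [\alpha, \eta_b]$ is attained at $\eta_t = \alpha$ (not $\eta_t = \eta_b$), and simultaneously that the $(\eta_t-\alpha)$ factor on the $(f_{\xi_t}(x_\ast)-f_{\xi_t}^\ast)$-term must be bounded by $\eta_b-\alpha$ rather than $\eta_b$, so that $\alpha/(2c) + (\eta_b - \alpha) \leq \eta_b$. A naive use of $\eta_t \leq \eta_b$ everywhere produces $2\eta_b\sigma^2/(\alpha\mu)$ instead of the stated $\eta_b\sigma^2/(\alpha\mu)$.
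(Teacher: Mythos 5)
Your proposal is correct and follows essentially the same route as the paper's proof: the strongly-convex form of Lemma \ref{md-progress}, the bound $\eta_t^2\norm[\nabla f_{\xi_t}(x_t)]^2_\ast/\mupsi \le \eta_t(f_{\xi_t}(x_t)-f_{\xi_t}^\ast)/c$, the sandwich $\alpha \le \eta_t \le \eta_b$, conditional expectation, relative strong convexity of $f$, and a geometric sum, arriving at the same per-step noise $\eta_b\sigma^2$ and hence the neighborhood $\eta_b\sigma^2/(\alpha\mu)$. The only difference is cosmetic bookkeeping: you split $\eta_t = \alpha + (\eta_t-\alpha)$ on the inner product and carry the $1/(2c)$ factor through, whereas the paper uses $c\ge 1/2$ immediately and absorbs the function-value terms into the nonnegative stochastic Bregman divergence $\eta_t B_{f_{\xi_t}}(x_\ast;x_t)$ before replacing $\eta_t$ by $\alpha$ — both rearrangements yield identical constants.
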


\begin{proof}
\begin{align*}
    B_\psi(x_\ast; x_{t+1}) &\leq B_\psi(x_\ast; x_t) -\eta_t \langle \nabla f_{\xi_t}(x_t),x_t-x_\ast\rangle  +\frac{\eta_t^2}{2\mu_{\psi}}\norm[\nabla f_{\xi_t}(x_t)]^2_\ast\\
    &\overset{(\ref{grad-bound})}{\leq} B_\psi(x_\ast; x_t) -\eta_t \langle \nabla f_{\xi_t}(x_t),x_t-x_\ast\rangle  +\eta_t\frac{(f_{\xi_t}(x_t)-f_{\xi_t}^\ast)}{2c} \\
    &\overset{(c \geq \nicefrac{1}{2})}{\leq} B_\psi(x_\ast; x_t) -\eta_t \langle \nabla f_{\xi_t}(x_t),x_t-x_\ast\rangle  +\eta_t(f_{\xi_t}(x_t)-f_{\xi_t}^\ast)\\
    &= B_\psi(x_\ast; x_t) -\eta_t \langle \nabla f_{\xi_t}(x_t),x_t-x_\ast\rangle  +\eta_t(f_{\xi_t}(x_t) - f_{\xi_t}(x_\ast)+f_{\xi_t}(x_\ast)-f_{\xi_t}^\ast) \\
    &=  B_\psi(x_\ast; x_t) -\underbrace{\eta_t\left(f_{\xi_t}(x_\ast) - f_{\xi_t}(x_t) - \langle\nabla f_{\xi_t}(x_t),x_\ast-x_t\rangle \right)}_{\geq 0} + \eta_t(f_{\xi_t}(x_\ast)-f_{\xi_t}^\ast)\\
    &\overset{(\ref{msps-bound})}{\leq} B_\psi(x_\ast; x_t) -\min\left\{ \frac{\mupsi}{2cL},\eta_b\right\}\left(f_{\xi_t}(x_\ast) - f_{\xi_t}(x_t) - \langle\nabla f_{\xi_t}(x_t),x_\ast-x_t\rangle \right)+ \eta_b(f_{\xi_t}(x_\ast)-f_{\xi_t}^\ast)
\end{align*}
Taking an expectation over $i$ condition on $x_t$ gives
\begin{align*}
   \expect[t]{B_\psi(x_\ast; x_{t+1})} &\leq B_\psi(x_\ast; x_t) -\min\left\{ \frac{\mupsi}{2cL},\eta_b\right\}\left(f(x_\ast) - f(x_t) - \langle\nabla f(x_t),x_\ast-x_t\rangle \right)+ \eta_b\expect[t]{(f_{\xi_t}(x_\ast)-f_{\xi_t}^\ast)} \\
   &\leq  B_\psi(x_\ast; x_t)\left(1-\mu\min\left\{ \frac{\mupsi}{2cL_{\max}},\eta_b\right\}\right)+ \eta_b\expect[t]{(f_{\xi_t}(x_\ast)-f_{\xi_t}^\ast)} \mbox{ (by relative strong convexity of $f$)}\\
   &=B_\psi(x_\ast; x_t)\left(1-\mu\alpha\right)+ \eta_b\expect[t]{(f_{\xi_t}(x_\ast)-f_{\xi_t}^\ast)}.
\end{align*}

Now by the tower property of expectations and applying the definition of $\sigma^2$,
\begin{align*}
     \expect{B_{\psi}(x_\ast;x_{t+1})} &\leq \expect{B_{\psi}(x_\ast;x_{t})}(1-\mu \alpha) +\eta_b \sigma^2.
\end{align*}
Iterating the inequality gives,
\begin{align*}
     \expect{B_{\psi}(x_\ast;x_{t+1})} &\leq B_{\psi}(x_\ast;x_{1})(1-\mu \alpha)^t +\sum_{s=0}^{t-1}\eta_b \sigma^2(1-\mu \alpha)^s\\
     &\leq B_{\psi}(x_\ast;x_{1})(1-\mu \alpha)^t +\frac{\eta_b\sigma^2}{\alpha\mu}.
\end{align*}
Where the last inequality follows by $\sum_{s=0}^{t-1}(1-\mu \alpha)^s \leq \sum_{s=0}^{\infty} (1-\mu \alpha)^s = \nicefrac{1}{\mu \alpha}$.
\end{proof}

\subsection{Proof of Theorem \ref{thm-smooth}}

\setcounter{theorem}{6}
\begin{theorem}
If $f_\xi$ is convex and $L$-smooth with respect to a norm $\norm$ almost surely,  assumption \ref{int-assumption} holds,
and $\psi$ is $\mupsi$-strongly convex over $\decSet$ with respect
to $\norm$. 
Then mirror descent with \algmax{} and $c \geq 1$ guarantees
\[
\expect{f(\bar{x}_t) -f(x_\ast)} \leq \frac{2 B_\psi(x_\ast;x_1)}{\alpha t} + \frac{2\eta_b \sigma^2}{\alpha}.
\]
Where $\alpha \coloneqq \min\{\nicefrac{\mupsi}{2cL}, \eta_b\}$.
\end{theorem}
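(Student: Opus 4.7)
The plan is to adapt the Euclidean SPS analysis of \citet{loizou2020stochastic}[Theorem 3.4] to the mirror descent setting, using \eqref{progress-strong} of Lemma \ref{md-progress} as the starting point and the self-bounding inequality \eqref{grad-bound} of \algmax{} as the main technical tool. First I would invoke \eqref{progress-strong}, which is available because $\psi$ is $\mupsi$-strongly convex over $\decSet$, to obtain
\[
B_\psi(x_\ast;x_{t+1}) \leq B_\psi(x_\ast;x_t) - \eta_t \langle \nabla f_{\xi_t}(x_t), x_t - x_\ast\rangle + \frac{\eta_t^2}{2\mupsi}\norm[\nabla f_{\xi_t}(x_t)]^2_\ast.
\]
The key move is to decompose the inner product via convexity of $f_{\xi_t}$ using $f_{\xi_t}^\ast$, not $f_{\xi_t}(x_\ast)$, as the pivot: starting from $f_{\xi_t}(x_t) - f_{\xi_t}(x_\ast) \leq \langle \nabla f_{\xi_t}(x_t), x_t-x_\ast\rangle$ and writing $f_{\xi_t}(x_t) - f_{\xi_t}(x_\ast) = (f_{\xi_t}(x_t)-f_{\xi_t}^\ast) - (f_{\xi_t}(x_\ast)-f_{\xi_t}^\ast)$ yields
\[
-\eta_t \langle \nabla f_{\xi_t}(x_t), x_t-x_\ast\rangle \leq -\eta_t(f_{\xi_t}(x_t)-f_{\xi_t}^\ast) + \eta_t(f_{\xi_t}(x_\ast)-f_{\xi_t}^\ast),
\]
so that every factor now multiplying $\eta_t$ is non-negative.

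Next I would plug in the \algmax{} self-bound \eqref{grad-bound}, which gives $\frac{\eta_t^2}{2\mupsi}\norm[\nabla f_{\xi_t}(x_t)]^2_\ast \leq \frac{\eta_t(f_{\xi_t}(x_t)-f_{\xi_t}^\ast)}{2c}$. Combined with the hypothesis $c \geq 1$, this produces
\[
B_\psi(x_\ast;x_{t+1}) \leq B_\psi(x_\ast;x_t) - \frac{\eta_t}{2}(f_{\xi_t}(x_t)-f_{\xi_t}^\ast) + \eta_t(f_{\xi_t}(x_\ast)-f_{\xi_t}^\ast).
\]
Because both factors are non-negative, the two-sided bound $\alpha \leq \eta_t \leq \eta_b$ (from \eqref{msps-bound} and the upper cap defining \algmax{}) can be applied in the correct direction for each term. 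Splitting $f_{\xi_t}(x_t)-f_{\xi_t}^\ast = (f_{\xi_t}(x_t)-f_{\xi_t}(x_\ast)) + (f_{\xi_t}(x_\ast)-f_{\xi_t}^\ast)$ and absorbing the residual $-\frac{\alpha}{2}(f_{\xi_t}(x_\ast)-f_{\xi_t}^\ast)$ into the positive term (using $\alpha\leq\eta_b$), I arrive at
\[
B_\psi(x_\ast;x_{t+1}) \leq B_\psi(x_\ast;x_t) - \frac{\alpha}{2}(f_{\xi_t}(x_t)-f_{\xi_t}(x_\ast)) + \eta_b(f_{\xi_t}(x_\ast)-f_{\xi_t}^\ast).
\]

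Taking the conditional expectation $\expect[t]{\cdot}$ turns $f_{\xi_t}(x_t)-f_{\xi_t}(x_\ast)$ into $f(x_t)-f(x_\ast)$ and $\expect{f_{\xi_t}(x_\ast)-f_{\xi_t}^\ast}$ into $\sigma^2$. Then, by the tower rule, rearranging, summing from $s=1$ to $t$ so that the Bregman divergences telescope, dividing by $\alpha t / 2$, and applying Jensen's inequality to $\bar{x}_t$ via convexity of $f$, the stated bound follows.

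The main subtlety is the choice of pivot in the first step: any other natural splitting produces a term of the form $\eta_t(f_{\xi_t}(x_t)-f_{\xi_t}(x_\ast))$ whose sign is random (only its expectation is non-negative), which prevents consistent use of the bounds $\alpha \leq \eta_t \leq \eta_b$ once one tries to decouple $\eta_t$ from the stochastic factor. Routing through $f_{\xi_t}^\ast$, which satisfies $f_{\xi_t}(\cdot) - f_{\xi_t}^\ast \geq 0$ pathwise, is exactly what forces every coefficient of $\eta_t$ to be sign-definite so that the SPS-style two-sided stepsize control can be applied termwise before taking expectation.
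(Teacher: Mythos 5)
Your proposal is correct and follows essentially the same route as the paper's proof: starting from the strong-convexity bound \eqref{progress-strong}, applying the self-bound \eqref{grad-bound} with $c\geq 1$, pivoting the convexity inequality through $f_{\xi_t}^\ast$ so that every coefficient of $\eta_t$ is sign-definite, applying $\alpha \leq \eta_t \leq \eta_b$ termwise, and then taking expectations, telescoping, and using Jensen. The observation you flag as the main subtlety --- that the decomposition must route through $f_{\xi_t}^\ast$ to make the stepsize bounds applicable pathwise --- is exactly the mechanism the paper's proof relies on.
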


\begin{proof}
We begin with Lemma \ref{md-progress},

\begin{align*}
   B_\psi(x_\ast; x_{t+1}) &\leq 
   B_\psi(x_\ast; x_{t}) -\eta_t \langle \nabla f_{\xi_t}(x_y), x_t-x_\ast \rangle +\frac{\eta_t^2}{2\mu_{\psi}}\norm[\nabla f_{\xi_t}(x_t)]^2_\ast \\
   &\leq B_\psi(x_\ast; x_{t}) -\eta_t \left( f_{\xi_t}(x_t) - f_{\xi_t}(x_\ast)\right) +\frac{\eta_t^2}{2\mu_{\psi}}\norm[\nabla f_{\xi_t}(x_t)]^2_\ast \mbox{ by convexity}\\
   &\overset{(\ref{grad-bound})}{\leq} B_\psi(x_\ast; x_{t}) -\eta_t \left( f_{\xi_t}(x_t) - f_{\xi_t}(x_\ast)\right) +\frac{\eta_t (f_{\xi_t}(x_t)-f_{\xi_t}^\ast)}{2c}\\
   &\overset{(c \geq 1)}{\leq} B_\psi(x_\ast; x_{t}) -\eta_t \left( f_{\xi_t}(x_t) - f_{\xi_t}(x_\ast)\right) +\frac{\eta_t (f_{\xi_t}(x_t)-f_{\xi_t}^\ast)}{2} \\
   &= B_\psi(x_\ast; x_{t}) -\eta_t \left( f_{\xi_t}(x_t) -f_{\xi_t}^\ast + f_{\xi_t}^\ast - f_{\xi_t}(x_\ast)\right) +\frac{\eta_t (f_{\xi_t}(x_t)-f_{\xi_t}^\ast)}{2}\\
   &= B_\psi(x_\ast; x_{t}) 
   -\eta_t\left(1-\frac{1}{2} \right) \left( f_{\xi_t}(x_t) -f_{\xi_t}^\ast \right) + \eta_t (f_{\xi_t}(x_\ast) - f_{\xi_t}^\ast) \\
   &= B_\psi(x_\ast; x_{t}) 
   -\frac{\eta_t}{2}  \underbrace{\left( f_{\xi_t}(x_t) -f_{\xi_t}^\ast \right)}_{\geq 0} + \eta_t (f_{\xi_t}(x_\ast) - f_{\xi_t}^\ast) \\
   &\overset{(\ref{msps-bound})}{\leq} B_\psi(x_\ast; x_{t}) 
   -\frac{\alpha}{2}  \left( f_{\xi_t}(x_t) -f_{\xi_t}^\ast \right) + \eta_b (f_{\xi_t}(x_\ast) - f_{\xi_t}^\ast)\\
   &= B_\psi(x_\ast; x_{t}) 
   -\frac{\alpha}{2}  \left(f_{\xi_t}(x_t) -f_{\xi_t}(x_\ast)\right)   -\frac{\alpha}{2}\left(f_{\xi_t}(x_\ast) - f_{\xi_t}^\ast \right) + \eta_b (f_{\xi_t}(x_\ast) - f_{\xi_t}^\ast)\\
   &\leq  B_\psi(x_\ast; x_{t}) 
   -\frac{\alpha}{2}  \left(f_{\xi_t}(x_t) -f_{\xi_t}(x_\ast)\right) +\eta_b (f_{\xi_t}(x_\ast) - f_{\xi_t}^\ast)
\end{align*}
Recall from (\ref{msps-bound}) that we have
\[
    \alpha = \min\left\{ \frac{\mupsi}{2cL_{\max}},\eta_b\right\} \leq  \eta_t \leq \eta_b.
\]
By a simple rearrangement we have
\begin{align*}
    \frac{\alpha}{2} \left( f_{\xi_t}(x_t) -f_{\xi_t}^\ast \right) \leq  B_\psi(x_\ast; x_{t}) -B_\psi(x_\ast; x_{t+1})+\eta_b (f_{\xi_t}(x_\ast) - f_{\xi_t}^\ast).
\end{align*}
Taking an expectation on both sides, dividing by $\alpha$, 
and applying the definition of $\sigma^2$ yields 
\begin{align*}
    \expect{f(x_t)-f(x_\ast)} \leq \frac{2}{\alpha}\left(\expect{B_\psi(x_\ast; x_{t})} -\expect{B_\psi(x_\ast; x_{t+1})}\right) +\frac{2\eta_b}{\alpha}\sigma^2. 
\end{align*}
Summing across time, applying convexity of $f$, and dividing by $t$ gives
\[
\expect{f(\bar{x}_t) -f(x_\ast)} \leq \frac{1}{t}\sum_{s=1}^{t}\expect{f(x_s)-f(x_\ast)} \leq \frac{2 B_\psi(x_\ast;x_1)}{\alpha t} + \frac{2\eta_b \sigma^2}{\alpha}.
\]
\end{proof}

\subsubsection{Constant stepsize corollary}\label{const-smooth-cor}

In this section we present the constant stepsize
corollary for Theorem \ref{thm-smooth}. 
If $\eta_b \leq \nicefrac{\mupsi}{2L}$ then
\algmax{} with $c=1$ is a constant stepsize
because of the lower bound (\ref{msps-bound}),
$\eta_t =\eta_b$,
and we have that $\eta_b = \alpha$.
Therefore plugging in these values into Theorem \ref{thm-smooth} 
gives the following corollary.

\setcounter{theorem}{7}
\begin{corollary}
Assume $f_\xi$ is convex and $L$ smooth with respect to
a norm $\norm$ almost surely, assumption \ref{int-assumption} holds,
and $\psi$ is $\mupsi$ strongly convex over $\decSet$ with respect
to the norm $\norm$. 
Then stochastic mirror descent with $\eta \leq \nicefrac{\mupsi}{2 L}$
guarantees
\[
\expect{f(\bar{x}_t) -f(x_\ast)} \leq \frac{2 B_\psi(x_\ast;x_1)}{\eta t} + 2\sigma^2.
\]
\end{corollary}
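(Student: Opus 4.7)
The plan is to derive this corollary as a direct instantiation of Theorem \ref{thm-smooth} by showing that, under the stated hypothesis, \algmax{} collapses to a constant stepsize.

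First I would fix $c = 1$ in \algmax{} and set $\eta_b = \eta \leq \nicefrac{\mupsi}{2L}$. The key observation is the self-bounding property \eqref{msps-bound}: for any $L$-smooth $f_{\xi_t}$ we have the lower bound
\[
\frac{\mupsi}{2cL} \leq \frac{\mupsi(f_{\xi_t}(x_t)-f_{\xi_t}^\ast)}{c\norm[\nabla f_{\xi_t}(x_t)]^2_\ast}.
\]
With $c=1$ this lower bound equals $\nicefrac{\mupsi}{2L} \geq \eta_b = \eta$, so the unclipped mirror Polyak stepsize always exceeds the cap $\eta_b$. Consequently the minimum in the definition of \algmax{} is always achieved by $\eta_b$, giving $\eta_t = \eta_b = \eta$ for every $t$ almost surely. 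In other words, SMD with this choice of parameters is literally constant-stepsize mirror descent.

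Next I would compute the quantity $\alpha = \min\{\nicefrac{\mupsi}{2cL}, \eta_b\}$ from the statement of Theorem \ref{thm-smooth}. Since $c=1$ and $\eta_b = \eta \leq \nicefrac{\mupsi}{2L}$, we have $\alpha = \eta_b = \eta$. Substituting $\alpha = \eta_b = \eta$ into the conclusion of Theorem \ref{thm-smooth} yields
\[
\expect{f(\bar{x}_t) - f(x_\ast)} \leq \frac{2 B_\psi(x_\ast; x_1)}{\eta t} + \frac{2\eta \sigma^2}{\eta} = \frac{2 B_\psi(x_\ast; x_1)}{\eta t} + 2\sigma^2,
\]
which is exactly the claimed bound.

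There is no real obstacle here since the corollary is effectively just a reparametrization of Theorem \ref{thm-smooth}; the only thing to verify carefully is that \algmax{} with $c=1$ and $\eta_b \leq \nicefrac{\mupsi}{2L}$ is genuinely the constant-stepsize rule $\eta$, which follows from the self-bounding lower bound \eqref{msps-bound}. All hypotheses needed for Theorem \ref{thm-smooth} (convexity and $L$-smoothness of $f_\xi$, Assumption \ref{int-assumption}, and $\mupsi$-strong convexity of $\psi$) are inherited directly from the corollary's assumptions, so no additional argument is needed.
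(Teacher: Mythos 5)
Your proof is correct and is essentially identical to the paper's own argument: both use the self-bounding lower bound \eqref{msps-bound} with $c=1$ to conclude that \algmax{} with $\eta_b=\eta\leq\nicefrac{\mupsi}{2L}$ reduces to the constant stepsize $\eta_t=\eta_b=\alpha$, and then substitute into Theorem \ref{thm-smooth}. No issues.
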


\subsection{SGD with preconditioning}\label{sec:precondition}

In this section we extend the result
of \algmax{} to the non-convex setting when 
$f$ is smooth and satisfies the PL condition.
The result generalizes Theorem 3.6 in \citet{loizou2020stochastic} by 
replacing SGD with preconditioned SGD.
Note that in this case we have $\psi(x) = \frac{1}{2}\langle x, M x\rangle$ is 
($\mupsi = 1$)-stronlgy convex with respect to the norm
$\norm_M$ and $\divergence{\psi}{x}{y} = \frac{1}{2}\norm[x-y]^2_M$.

\begin{assumption}[\cite{polyak1964gradient,lojasiewicz1963propriete}]\label{pl}
Assume that $f: \reals^n \to \reals$ 
satisfies the PL condition with respect to the norm $\norm_\ast$ if there exists $\mu > 0$
such that for all $x \in \reals^n$
\begin{align}
    \norm[\nabla f(x)]^2_\ast \geq 2 \mu (f(x) -f^\ast).\label{pl-inequality}
\end{align}
\end{assumption}

\begin{theorem}
Assume that $f$ and $f_\xi$ are $L$ smooth 
with respect to the norm $\norm_M$ almost surely,
where $M$ is a positive definite matrix.
Furthermore, assume that $f$ 
satisfies the PL condition
(\ref{pl-inequality}) with respect to the
norm $\norm_{M^{-1}}$, 
then unconstrained stochastic mirror descent 
with $\psi(x) = \frac{1}{2}\norm[x]_M^2$ and stepsizes
\[
    \eta_t = \min \left\{ \frac{f_{\xi_t}(x_t)- f_{\xi_t}^\ast}{c\norm[\nabla f_{\xi_t} (x_t)]^2_{M^{-1}}}, \eta_b \right\},
\]
with $c > \frac{L_{\max}}{4\mu}$ and $\eta_b < \max\left\{ \nicefrac{1}{\left(\frac{1}{\alpha}-2 \mu + \frac{L_{\max}}{2c}\right)}, \frac{1}{2cL_{\max}}\right\}$,
guarantees
\[
\expect{f(x_{t+1})-f(x_\ast)} \leq \nu^t(f(x_1)-f(x_\ast)) + \frac{L\sigma^2 \eta_b}{2(1-\nu)c},
\]
where $\alpha = \min\{\frac{1}{2cL_{\max}}, \eta_b\}$ and $\nu = \eta_b(\frac{1}{\alpha} - 2\mu + \frac{L_{\max}}{2c}) \in (0,1)$.
\end{theorem}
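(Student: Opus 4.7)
The plan is to follow the standard template for analyzing Polyak-style stepsizes under smoothness plus the PL condition, adapted to the preconditioned setting (so norms become $\norm_M$ and $\norm_{M^{-1}}$). I would start from the descent lemma induced by $L$-smoothness of $f$ with respect to $\norm_M$, namely
\[
f(x_{t+1}) \leq f(x_t) + \langle \nabla f(x_t), x_{t+1}-x_t \rangle + \tfrac{L}{2}\norm[x_{t+1}-x_t]_M^2,
\]
and substitute the mirror step $x_{t+1}-x_t = -\eta_t M^{-1}\nabla f_{\xi_t}(x_t)$. Since $\norm[M^{-1}\nabla f_{\xi_t}(x_t)]_M^2 = \norm[\nabla f_{\xi_t}(x_t)]_{M^{-1}}^2$, this yields
\[
f(x_{t+1}) \leq f(x_t) - \eta_t \langle \nabla f(x_t), M^{-1}\nabla f_{\xi_t}(x_t)\rangle + \tfrac{L\eta_t^2}{2}\norm[\nabla f_{\xi_t}(x_t)]_{M^{-1}}^2.
\]

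Next I would invoke the self-bounding property \eqref{grad-bound} of \alg{} with $\mupsi=1$ and dual norm $\norm_{M^{-1}}$ to convert $\eta_t^2 \norm[\nabla f_{\xi_t}(x_t)]_{M^{-1}}^2$ into $\eta_t(f_{\xi_t}(x_t)-f_{\xi_t}^\ast)/c$, and then upper bound $\eta_t \leq \eta_b$ in that term (since the factor is non-negative). To process the inner-product term I would apply the polarization identity
\[
-2\langle a, M^{-1}b\rangle = \norm[a-b]_{M^{-1}}^2 - \norm[a]_{M^{-1}}^2 - \norm[b]_{M^{-1}}^2,
\]
which cleanly separates the deterministic $-\tfrac{\eta_t}{2}\norm[\nabla f(x_t)]_{M^{-1}}^2$ part (to be lower-bounded via $\eta_t \geq \alpha$ and the PL condition) from a stochastic noise part $\tfrac{\eta_t}{2}\norm[\nabla f(x_t)-\nabla f_{\xi_t}(x_t)]_{M^{-1}}^2$ and a redundant negative term that can be dropped.

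After taking the conditional expectation $\mathbb{E}_t[\cdot]$, the cross-term disappears because $\mathbb{E}_t[\nabla f_{\xi_t}(x_t)] = \nabla f(x_t)$ (handled after the polarization, which side-steps the usual headache of $\eta_t$ depending on $\xi_t$), and the two smoothness-driven bounds $\mathbb{E}_t\norm[\nabla f_{\xi_t}(x_t)]_{M^{-1}}^2 \leq 2L\, \mathbb{E}_t[f_{\xi_t}(x_t)-f_{\xi_t}^\ast]$ together with $\mathbb{E}_t[f_{\xi_t}(x_t)-f_{\xi_t}^\ast] = f(x_t)-f(x_\ast) + \sigma^2$ translate everything into $(f(x_t)-f(x_\ast))$ and $\sigma^2$. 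Applying PL, $\norm[\nabla f(x_t)]_{M^{-1}}^2 \geq 2\mu(f(x_t)-f(x_\ast))$, to the surviving negative term produces
\[
\mathbb{E}_t[f(x_{t+1})-f(x_\ast)] \leq \nu\, (f(x_t)-f(x_\ast)) + \tfrac{L\sigma^2\eta_b}{2c},
\]
with $\nu = \eta_b(1/\alpha - 2\mu + L_{\max}/(2c))$ read off from collecting coefficients. I would then use the tower property, iterate the recursion $t$ times, and sum the geometric series $\sum_{s=0}^{t-1}\nu^s \leq 1/(1-\nu)$, where the assumed upper bound on $\eta_b$ is exactly what ensures $\nu \in (0,1)$ (the condition $c > L_{\max}/(4\mu)$ makes $\alpha = \eta_b$ feasible).

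The main technical obstacle is the one already flagged: the step size $\eta_t$ is $\xi_t$-measurable, so one cannot simply push the expectation past $\eta_t \langle \nabla f(x_t), M^{-1}\nabla f_{\xi_t}(x_t)\rangle$. My way around this is the polarization identity combined with the sandwich $\alpha \leq \eta_t \leq \eta_b$: lower-bounding $\eta_t$ on the negative (PL-providing) piece and upper-bounding $\eta_t$ on the positive (noise) piece gives a clean, $\xi_t$-free bound whose expectation behaves well. Verifying that the resulting coefficient recovers the stated $\nu$ exactly, and that the hypotheses on $c$ and $\eta_b$ indeed force $\nu<1$, is then a matter of bookkeeping rather than a new idea.
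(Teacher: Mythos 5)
Your outline matches the paper's proof in its setup (the $\norm_M$ descent lemma, substituting $x_{t+1}-x_t=-\eta_t M^{-1}\nabla f_{\xi_t}(x_t)$, the self-bounding inequality, the sandwich $\alpha\le\eta_t\le\eta_b$, tower property and geometric series), but the polarization step is where the argument breaks. Writing
\[
-\eta_t\langle \nabla f(x_t), M^{-1}\nabla f_{\xi_t}(x_t)\rangle
= \tfrac{\eta_t}{2}\norm[\nabla f(x_t)-\nabla f_{\xi_t}(x_t)]^2_{M^{-1}}
 - \tfrac{\eta_t}{2}\norm[\nabla f(x_t)]^2_{M^{-1}}
 - \tfrac{\eta_t}{2}\norm[\nabla f_{\xi_t}(x_t)]^2_{M^{-1}}
\]
forces you to carry the positive noise term $\tfrac{\eta_b}{2}\expect[t]{\norm[\nabla f(x_t)-\nabla f_{\xi_t}(x_t)]^2_{M^{-1}}}$. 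The only tool available to control it is smoothness, $\expect[t]{\norm[\nabla f_{\xi_t}(x_t)]^2_{M^{-1}}}\le 2L\,\expect[t]{f_{\xi_t}(x_t)-f_{\xi_t}^\ast} = 2L\left(f(x_t)-f(x_\ast)+\sigma^2\right)$, which injects an extra $+L\eta_b\,(f(x_t)-f(x_\ast))$ into the contraction coefficient and an extra $+L\eta_b\sigma^2$ into the neighborhood. Meanwhile the PL term, after lower-bounding $\eta_t\ge\alpha$, only contributes $-\alpha\mu$ (or $-(\alpha+\eta_b)\mu$ if you retain the $-\norm[\nabla f(x_t)]^2_{M^{-1}}$ piece of the variance decomposition). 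With $\alpha=\eta_b$ your coefficient becomes $1-2\mu\eta_b+L\eta_b+\tfrac{L\eta_b}{2c}$, which exceeds the claimed $\nu=1-2\mu\eta_b+\tfrac{L_{\max}\eta_b}{2c}$ by $L\eta_b$ and is $\ge 1$ whenever $L\left(1+\tfrac{1}{2c}\right)\ge 2\mu$ --- essentially always, since $\mu\le L$ for an $L$-smooth PL function. So the bookkeeping does not recover the stated $\nu$, and under the stated hypotheses on $c$ and $\eta_b$ your recursion need not even contract. This is exactly the bounded-variance-type term the paper is structured to avoid.

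The paper's way around the $\xi_t$-measurability of $\eta_t$ is different and produces no variance term: divide the descent inequality by $\eta_t$ \emph{before} taking expectations. The inner product then appears with coefficient $1$, so $\expect[t]{-\langle\nabla f(x_t),M^{-1}\nabla f_{\xi_t}(x_t)\rangle}=-\norm[\nabla f(x_t)]^2_{M^{-1}}$ exactly, and PL is applied with its full coefficient $2\mu$. The price is the mismatch between $\expect[t]{\nicefrac{(f(x_t)-f(x_\ast))}{\eta_t}}\le \nicefrac{(f(x_t)-f(x_\ast))}{\alpha}$ and $\nicefrac{(f(x_{t+1})-f(x_\ast))}{\eta_b}\le \nicefrac{(f(x_{t+1})-f(x_\ast))}{\eta_t}$ (both legitimate because the gaps are nonnegative), which is precisely where the $\nicefrac{\eta_b}{\alpha}$ factor in $\nu$ originates. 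Replacing your polarization step with this division trick, the remainder of your outline goes through.
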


\begin{proof}
We have that the algorithm performs updates of the form
\[
    x_{t+1} = x_t -\eta_t M^{-1}\nabla f_{\xi_t}(x_t).
\]
We first apply the $L$ smoothness upper bound on $f$,
\begin{align*}
    f(x_{t+1}) &\leq f(x_t) + \langle \nabla f(x_t), x_{t+1}-x_t\rangle + \frac{L}{2}\norm[x_{t+1}-x_t]_M^2 \\ 
     &= f(x_t) - \eta_t \langle \nabla f(x_t), M^{-1} \nabla f_{\xi_t}(x_t)\rangle
    + \frac{L\eta_t^2}{2} \norm[M^{-1} \nabla f_{\xi_t}(x_t)]^2_{M}\\
    &= f(x_t) - \eta_t \langle \nabla f(x_t), M^{-1} \nabla f_{\xi_t}(x_t)\rangle
    + \frac{L\eta_t^2}{2} \langle M^{-1} \nabla f_{\xi_t}(x_t), M M^{-1} \nabla f_{\xi_t}(x_t)\rangle \\
    &= f(x_t) - \eta_t \langle \nabla f(x_t), M^{-1} \nabla f_{\xi_t}(x_t)\rangle
    + \frac{L\eta_t^2}{2}\norm[\nabla f_{\xi_t}(x_t)]^2_{M^{-1}} \\
    \implies  \frac{f(x_{t+1}) - f(x_t)}{\eta_t} &\leq - \langle \nabla f(x_t), M^{-1} \nabla f_{\xi_t}(x_t)\rangle + \frac{L\eta_t}{2}\norm[\nabla f_{\xi_t}(x_t)]^2_{M^{-1}} \\
    &\overset{(\ref{grad-bound})}{\leq} - \langle \nabla f(x_t), M^{-1} \nabla f_{\xi_t}(x_t)\rangle + \frac{L}{2c}(f_{\xi_t}(x_t)-f_{i}^\ast) \\
    &=- \langle \nabla f(x_t), M^{-1} \nabla f_{\xi_t}(x_t)\rangle + \frac{L}{2c}(f_{\xi_t}(x_t)-f_{\xi_t}(x_\ast))+ \frac{L}{2c}(f_{\xi_t}(x_\ast) -f_{i}^\ast)
\end{align*}
We proceed by taking an expectation over $\xi_t$ condition on knowing $x_t$.
\begin{align*}
    \expect[t]{\frac{f(x_{t+1}) - f(x_t)}{\eta_t}} &= - \langle \nabla f(x_t), M^{-1} \nabla f(x_t)\rangle + \frac{L}{2}(f(x_t)-f(x_\ast))+ \frac{L}{2}\expect[t]{(f_{\xi_t}(x_\ast) -f_{i}^\ast)} \\
    &\leq - \norm[\nabla f(x)]_{M^{-1}}^2 + \frac{L}{2c}(f(x_t)-f(x_\ast))+\frac{L}{2c}\sigma^2 \\
    &\overset{(\ref{pl-inequality})}{\leq} -2 \mu (f(x_t)-f(x_\ast)) + \frac{L}{2c}(f(x_t)-f(x_\ast))+\frac{L}{2c}\sigma^2
\end{align*}
Let $\alpha = \min \{\frac{\mupsi}{2cL_{\max}}, \eta_b\}$.

\begin{align*}
   \expect[t]{\frac{f(x_{t+1}) - f(x_\ast)}{\eta_t}} &\leq  \expect[t]{\frac{f(x_{t}) - f(x_\ast)}{\eta_t}}-2 \mu (f(x_t)-f(x_\ast)) + \frac{L}{2c}(f(x_t)-f(x_\ast))+\frac{L}{2c}\sigma^2\\
   &\leq \frac{1}{\alpha}(f(x_{t}) - f(x_\ast))-2 \mu (f(x_t)-f(x_\ast)) + \frac{L}{2c}(f(x_t)-f(x_\ast))+\frac{L}{2c}\sigma^2\\
   &= \left(\frac{1}{\alpha} - 2\mu + \frac{L}{2c} \right)(f(x_t)-f(x_\ast))+\frac{L}{2c}\sigma^2 \\
    &\leq \left(\frac{1}{\alpha} - 2\mu + \frac{L_{\max}}{2c} \right)(f(x_t)-f(x_\ast))+\frac{L}{2c}\sigma^2
\end{align*}

Therefore we have the following sequence of inequalities,
\begin{align*}
    \expect[t]{\frac{f(x_{t+1}) - f(x_\ast)}{\eta_b}} &\leq \expect[t]{\frac{f(x_{t+1}) - f(x_\ast)}{\eta_t}} \leq \left(\frac{1}{\alpha} - 2\mu + \frac{L_{\max}}{2c} \right)(f(x_t)-f(x_\ast))+\frac{L}{2c}\sigma^2
\end{align*}

By the tower property of expectations and multiplying both sides by $\eta_b$ we have
\begin{align*}
 \expect{f(x_{t+1}) - f(x_\ast)} &\leq \underbrace{\eta_b\left(\frac{1}{\alpha} - 2\mu + \frac{L_{\max}}{2c} \right)}_{\nu} \expect{(f(x_t)-f(x_\ast))}+\frac{\eta_b L}{2c}\sigma^2.
\end{align*}

If $\nu \in (0,1)$ then iterating the inequality and summing the geometric
series gives the result,
\begin{align*}
    \expect{f(x_{t+1}) - f(x_\ast)} &\leq \nu^t(f(x_1) -f(x_\ast) + \sum_{s=0}^{t-1}\nu^s\frac{\eta_b L }{2c} \sigma^2\\
    &\leq \nu^t(f(x_1) -f(x_\ast) +\frac{\eta_b L\sigma^2 }{2(1-\nu)c}.
\end{align*}

Therefore, it remains to show that $0 < \nu < 1$. 
For the lower bound notice that $\alpha \leq \frac{1}{2cL_{\max}}$, 
\begin{align*}
    \nu &= \eta_b \left(\frac{1}{\alpha} -2 \mu + \frac{L_{\max}}{2c} \right) \\
    &\geq \eta_b \left(2cL_{\max} -2 \mu + \frac{L_{\max}}{2c} \right)\\
    &= \eta_b \left(\left(2c+\frac{1}{2c}\right)L_{\max} - 2\mu \right) >0.
\end{align*}

Following similar arguments made in \citet{loizou2020stochastic}[Theorem 3.6], we can show $\nu < 1$ by 
considering two cases. 
Recall from our assumptions we have $c > \frac{L_{\max}}{4\mu}$ and $\eta_b < \max\left\{ \nicefrac{1}{\left(\frac{1}{\alpha}-2 \mu + \frac{L_{\max}}{2c}\right)}, \frac{1}{2cL_{\max}}\right\}$,
therefore we consider the two following cases:
\begin{align}
    \eta_b  &< \frac{1}{2c L_{\max}} \label{case1}\\
    \eta_b &< \frac{1}{\left(\frac{1}{\alpha}-2 \mu + \frac{L_{\max}}{2c}\right)}\label{case2}.
\end{align}

For the first case (\ref{case1}) we have $\alpha = \eta_b$ and
\begin{align*}
    \nu &= \eta_b \left(\frac{1}{\eta_b} -2 \mu + \frac{L_{\max}}{2c} \right)\\
    &=1-2\eta_b \mu + \frac{L_{\max}}{2c}\eta_b  \\
    &\overset{(c > \frac{L_{\max}}{4\mu})}{<} 1-2\mu \eta_b + 2\mu \eta_b = 1.
\end{align*}

For the second case (\ref{case2}) 
we have $\alpha = \frac{1}{2cL_{\max}}$ and
by the upper bound we have 
\begin{align*}
    \nu&= \eta_b \left(\frac{1}{\alpha} -2 \mu + \frac{L_{\max}}{2c} \right) < 1.
\end{align*}
However, we also have $\alpha = \frac{1}{2cL_{\max}} \leq \eta_b$, to avoid a contradiction we need
\[
    \frac{1}{2cL_{\max}} < \frac{1}{\frac{1}{\alpha}-2\mu+\frac{L_{\max}}{2c}} = \frac{1}{2cL_{\max}-2\mu+\frac{L_{\max}}{2c}} .
\]
Which holds by assumption since $c > \frac{L_{\max}}{4\mu}$.
\end{proof}

\section{Experiment details}\label{sec:exp-details}
In this section we provide details for our experiments
including the updates for different mirror descent algorithms.
Note that in all our experiments we have $f_i^\ast = 0$.

\subsection{Compute resources}
We ran around a thousand experiments using an internal cluster, where each experiment uses a single NVIDIA Tesla P100 GPU, 40GB of RAM, and 4 CPUs. Some experiments like the synthetic ones took only few minutes to complete, while the deep learning experiments like CIFAR10 took about 12 hours.

\subsection{Mirror descent across p-norms}
We select $\psi(x) = \frac{1}{2}\norm[x]^2_p$ and $\decSet = \reals^d$
for $1 < p \leq 2$. We have in this case that $\psi$
is $\mupsi = (p-1)$ strongly convex with respect to the norm
$\norm_p$ with dual norm $\norm_q$ where $q$ is such that $\nicefrac{1}{p}+\nicefrac{1}{q}=1$~\citep{orabona2019modern}.
Therefore, as defined in Corollary \ref{corr-pnorm}, \algmax{} with
$c=1$ is
\[
\eta_t = \min \left\{\frac{(p-1)(f_i(x_t)-f_i^\ast)}{\norm[\nabla f_i(x_t)]^2_q}, \eta_b \right\},
\]
and similarly for \alg{}.

The closed form update for mirror descent in this case is given
by the following coordinate wise updates~\citep{duchi2018introductory}:
let $\phi^p : \reals^d \to \reals^d$ with component functions
$\phi^p_i (x) = (\norm[x]_p)^{2-p}\operatorname{sign}(x_i)|x_i|^{p-1}$, then the mirror descent update with stepsize $\eta_t$ is
\[
    x_{t+1} = \phi^q(\phi^p(x_t) - \eta_t \nabla f_i(x_t)).
\]

\subsection{Projected gradient descent with positive constrains}

We select $\psi(x) =\frac{1}{2}\norm[x]^2_2$ to recover
projected gradient descent, in this case $\psi$ is $\mupsi=1$ strongly convex with respect to the Euclidean norm and
$\norm_\ast = \norm_2$.
Since $\decSet = \reals^d_{+}$, the non-negative orthant,
the projection step amounts to clipping negative values
(setting them to zero).

\subsection{Exponentiated gradient with $\ell_1$ constraint}

We consider the case of supervised learning
with constraint set $\decSet = \{x : \norm[x]_1 \leq \lambda \}$.
To consider the exponentiated gradient algorithm
we equivalently write the set $\decSet$ as a
convex hull of its corners,
$\decSet = \{ \Lambda x : x \in \Delta_{2d} \}$ where
$\Delta_{2d}$ is the $(2d-1)$-dimensional
probability simplex and  $\Lambda$ is a matrix with $2d$ columns and $d$ rows,
\[ \Lambda =
\begin{bmatrix}
\lambda & -\lambda & 0 & 0 & \cdots &0 & 0\\
0 & 0 & \lambda & -\lambda &\cdots & 0 & 0 \\
0 & 0 & 0 & 0 & \cdots & 0 & 0 \\
\vdots & \vdots & \vdots & \vdots &\cdots & \vdots & \vdots\\
0 & 0 & 0 & 0 & \cdots  & \lambda & -\lambda
\end{bmatrix}.
\]
Therefore we can use the exponentiated algorithm 
with constraint set $\Delta_{2d}$ by
selecting $\psi(x) = \sum_{i=1}^{2d} x_i \log(x_i)$. In this case $\psi$ is $\mupsi=1$ strongly convex on $\Delta_{2d}$ with respect to the norm $\norm_1$. Since the dual norm $\norm_\ast = \norm_{\infty}$ we have that \algmax{} with $c=1$ is
\[
    \eta_t = \min \left\{\frac{(f_i(x_t)-f_i^\ast)}{\norm[\nabla f_i(x_t)]^2_\infty}, \eta_b \right\},
\]
and similarly for \alg{}.

The mirror descent update then can be written in two steps~\citep{bubeck2014convex},
\begin{align*}
    y_{t+1} &= x_t \odot \exp(-\eta_t \nabla f_i(x_t))\\
    x_{t+1} &= \frac{y_{t+1}}{\norm[y_{t+1}]_1}.
\end{align*}
Where $\odot$ and $\exp$ are component wise
multiplication and component wise exponentiation respectively.

\subsection{Additional Results across p-norms}


We observe in Figure~\ref{fig:mushrooms} that \alg{} outperforms a large grid of step-sizes for most values of $p$. Note that we used the mushrooms dataset with the kernel bandwidth selected in \citet{vaswani2019painless} which satisfies interpolation.

\begin{figure*}[h]
    \centering
      \includegraphics[width = 1\textwidth]{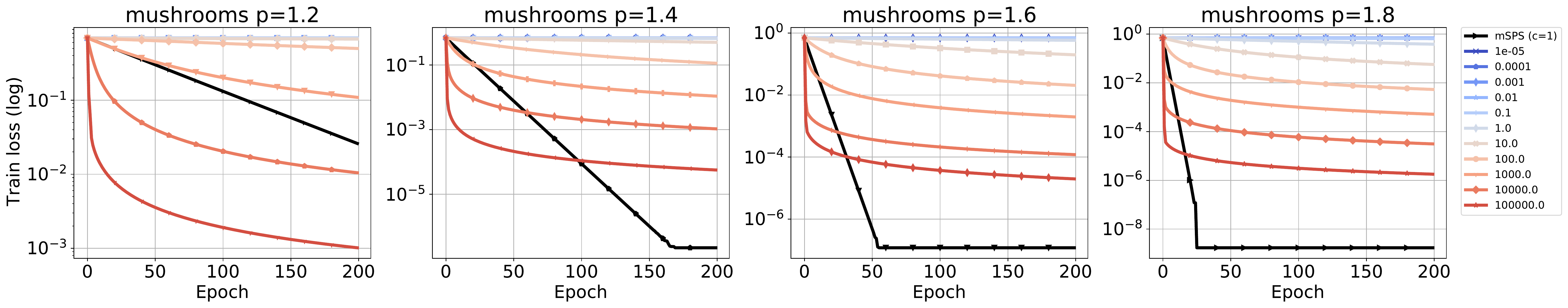}
    \caption{Comparison between \alg{} with $c=1$ and constant step-sizes on convex binary-classification problem on the mushroom dataset.}
\label{fig:mushrooms}    
\end{figure*}

\begin{figure*}[t!]
    \centering
    \includegraphics[width = 1\textwidth]{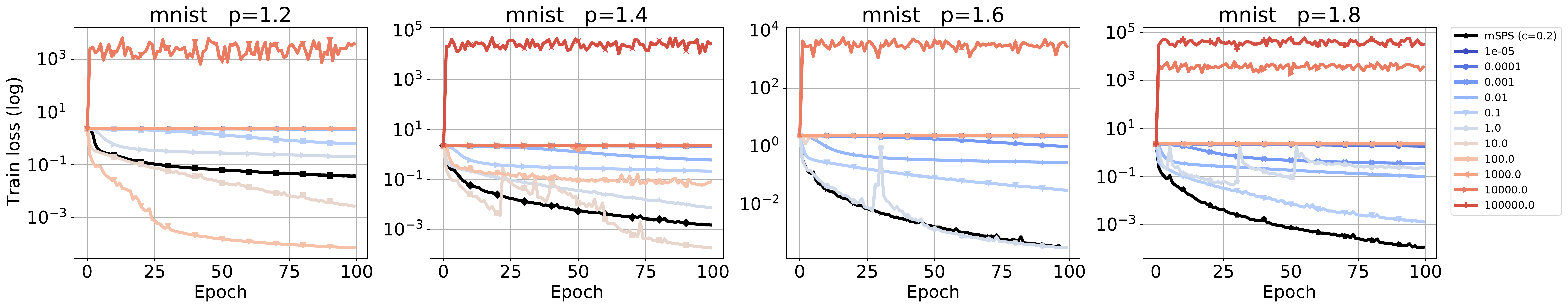}
    \caption{Comparison between \alg{} with $c=.2$ and constant step-sizes on MNIST across different values of $p$.}
\label{fig:mnist}    
\end{figure*}

For the non-convex multi-class classification problem in Figure~\ref{fig:mnist} we use MNIST.
MNIST has a training set consisting of 60k examples and a test set of 10k examples. We use a 1 hidden-layer multi-layer perceptron (MLP) of width 1000. We also observe that \alg{} is either competitive or better than most constant stepsizes across various values of $p$.

\end{document}